\documentclass[english]{article}
\usepackage[T1]{fontenc}
\usepackage[latin9]{inputenc}
\usepackage{geometry}
\usepackage{refstyle}
\usepackage{float}
\usepackage{url}
\usepackage{amsmath}
\usepackage{amsthm}
\usepackage{amssymb}
\usepackage{graphicx}
\usepackage[authoryear]{natbib}

\makeatletter

\AtBeginDocument{\providecommand\secref[1]{\ref{sec:#1}}}
\AtBeginDocument{\providecommand\subsecref[1]{\ref{subsec:#1}}}
\AtBeginDocument{\providecommand\eqref[1]{\ref{eq:#1}}}
\AtBeginDocument{\providecommand\figref[1]{\ref{fig:#1}}}
\AtBeginDocument{\providecommand\Subsecref[1]{\ref{Subsec:#1}}}
\AtBeginDocument{\providecommand\propref[1]{\ref{prop:#1}}}
\AtBeginDocument{\providecommand\corref[1]{\ref{cor:#1}}}
\providecommand{\tabularnewline}{\\}
\RS@ifundefined{subsecref}
  {\newref{subsec}{name = \RSsectxt}}
  {}
\RS@ifundefined{thmref}
  {\def\RSthmtxt{theorem~}\newref{thm}{name = \RSthmtxt}}
  {}
\RS@ifundefined{lemref}
  {\def\RSlemtxt{lemma~}\newref{lem}{name = \RSlemtxt}}
  {}

\numberwithin{equation}{section}
\numberwithin{figure}{section}
\theoremstyle{plain}
\newtheorem{thm}{\protect\theoremname}[section]
\theoremstyle{plain}
\newtheorem{prop}[thm]{\protect\propositionname}
\theoremstyle{plain}
\newtheorem{cor}[thm]{\protect\corollaryname}

\usepackage{amsmath}
\usepackage{amsthm}
\usepackage{url}
\usepackage{hyperref}

\newref{sec}{name=Section~,Name=Section~}
\newref{subsec}{name=Section~,Name=Section~}
\newref{subsubsec}{name=Section~,Name=Section~}
\newref{eq}{name=Equation~,Name=Equation~}
\newref{prop}{name=Proposition~,Name=Proposition~}
\newref{thm}{name=Theorem~,Name=Theorem~}
\newref{lem}{name=Lemma~,Name=Lemma~}
\newref{cor}{name=Corollary~,Name=Corollary~}
\newref{defn}{name=Definition~,Name=Definition~}
\newref{fig}{name=Figure~,Name=Figure~}

\makeatother

\usepackage{babel}
\providecommand{\corollaryname}{Corollary}
\providecommand{\propositionname}{Proposition}
\providecommand{\theoremname}{Theorem}

\begin{document}
\title{Curvature-Induced Force Fields in Hyperelasticity}
\author{Victor Dods}
\date{2026.01.14}
\maketitle
\begin{abstract}
Originally motivated by creating first-person computer visualizations
within Riemannian manifolds -- the author was led to study deformable-body
mechanics, as rigid-body mechanics is not available in a generic Riemannian
manifold due to its lack of nontrivial isometry group. Hyperelasticity
is a particularly nice sub-category of continuum mechanics in which
a deformable, elastic body's behavior is determined by a stored energy
density function. This allows problems to be posed variationally,
and powerful tools brought to bear on studying and solving them.

This article presents numerical simulations of static solutions to
a particular class of problems in hyperelastic mechanics in 2-dimensional
Riemannian manifolds in which a flat hyperelastic body $B$ is embedded
into a region $\Omega$ in a nowhere-flat surface $S$ of revolution
$z=z\left(r\right)$ such that $\left|K\left(r\right)\right|$ decreases
as $r\to\infty$, where $K$ denotes the Gaussian curvature of $S$.
For example, the funnel $z=-r^{-1}$ or the paraboloid $z=\frac{1}{2}r^{2}$.
Because $B$ is flat, the body can't achieve a zero-stored-energy
configuration, and restorative forces arise in the body to move it
toward a region of lower stored energy -- meaning, toward a flatter
configuration.

With the addition of a gravitational potential $U\left(r\right)=z\left(r\right)$
on $S$, forces act on the body to pull it toward $r=0$. If the hyperelastic
material has sufficient stiffness and the body remains within the
region $\Omega$, then the body has an equilibrium configuration in
which the body's deformation-response forces perfectly cancel the
gravitational forces. Such a configuration represents a kind of \textquotedbl levitation\textquotedbl{}
phenomenon within this surface.

The numerical implementation of this problem will be detailed and
the resulting numerical solutions and various consequences discussed.
All code and data is available at \url{https://github.com/vdods/jello}.
Updates and other relevant additional work will be posted there.
\end{abstract}
\begin{center}
\includegraphics[width=6in]{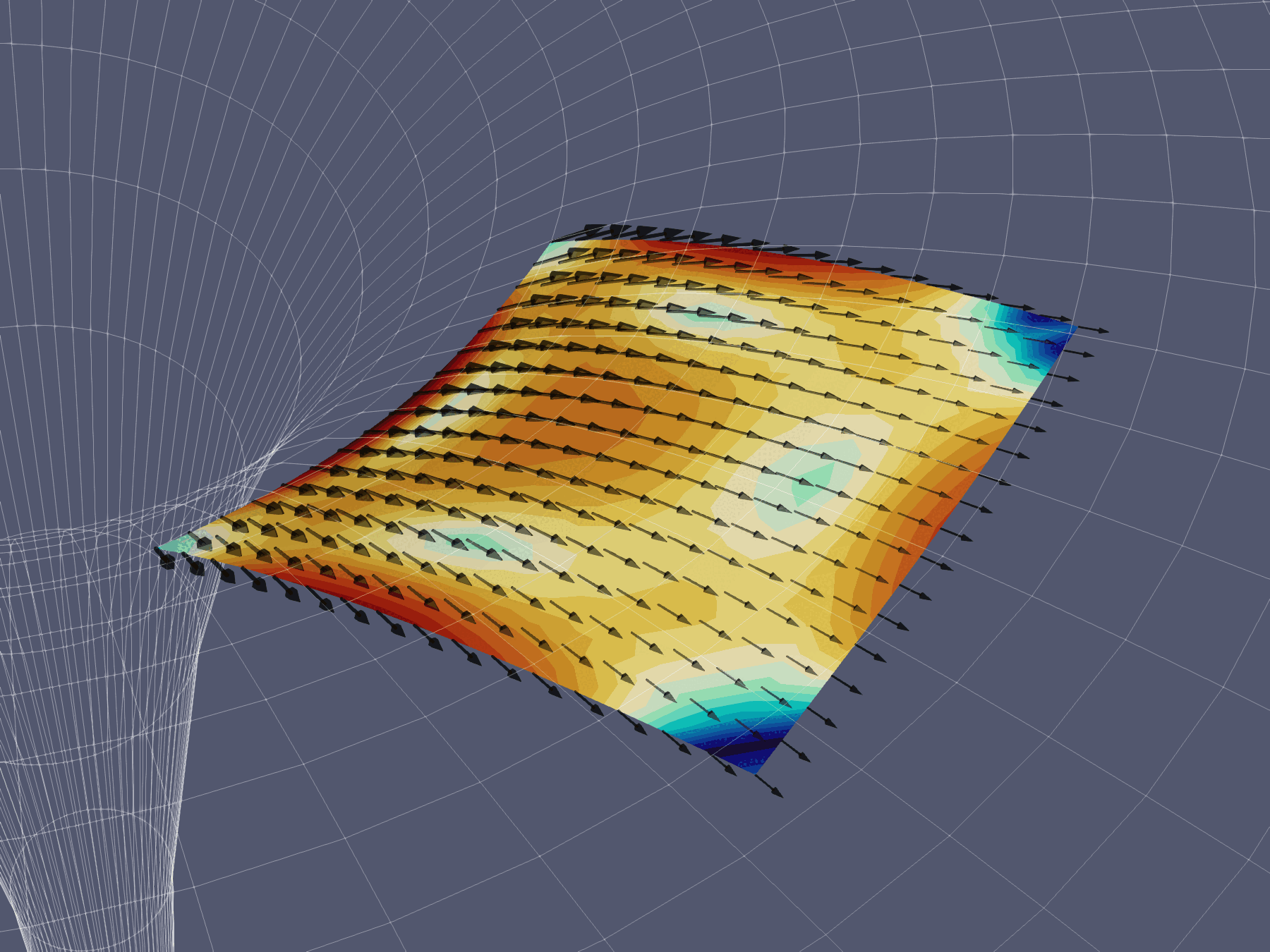}
\par\end{center}

\section{Motivation}

\subsection{Intrinsic Visualization of Curved Space}

The author's motivation for studying the mechanics of hyperelastic
bodies within Riemannian manifolds originated in work in 2003 and
2008 on \textquotedbl first person\textquotedbl{} visualization of
manifolds -- what a subject would see if they were physically located
within a manifold.

Modeling the optics of a Riemannian manifold has a clear generalization
from that of classical optics, in that \textquotedbl photons\textquotedbl{}
follow the manifold's geodesics. Work on this dates back to 1911 with
Einstein predicting gravitational lensing \citet{einstein_1911,einstein_1916}.
Modern work on intrinsic visualization of curved spaces includes visualization
of hyperbolic space \citet{gunn_2002}, constant-curvature spaces
\citet{Weeks2002RealtimeRI}, interactive simulation of relativity
\citet{Savage2006RealTR}, hyperbolic space in virtual reality \citet{HartHawksleyMatsumotoSegerman2017},
and some other non-Euclidean spaces in virtual reality \citet{VelhoDaSilvaNovello2020,NOVELLO2020},
and notably the spectacular visualization of a black hole in the movie
\emph{Interstellar} \citet{Thorne2014}.

Because of the complex and varied character of geodesics, there are
many optical phenomena that occur in Riemannian manifolds that do
not occur in Euclidean spaces, such as the following.
\begin{itemize}
\item Objects may have significantly nonlinear optical magnification/minification.
For example, within a sphere, an object located at the antipodal point
relative to the subject will produce an image that covers the subject's
entire visual field.
\item Each point on the surface of a single object may have multiple images
in the subject's visual field. For example, in a flat torus, a single
object will produce a lattice of images within the subject's visual
field.
\item Caustics (in the sense of optics) may be produced due to the curvature
of the manifold (i.e. not from a surface of reflection or refraction),
leading to the concentration and dilution of light.
\item Photons may have a closed trajectory (in the sense of closed geodesics).
In particular, this means that the subject may be able to see the
back of their own head/body. Photons may return to their initial positions
but not their initial directions, meaning that the subject may be
able to see any part of their own head/body.
\item If the manifold is non-orientable, then objects may appear\footnote{Chiral reversal will also be true of the objects themselves, if they
traverse certain loops in the non-orientable manifold.} chirally reversed from some perspectives.
\end{itemize}
\begin{figure}[H]
\begin{centering}
\includegraphics[totalheight=0.25\textwidth]{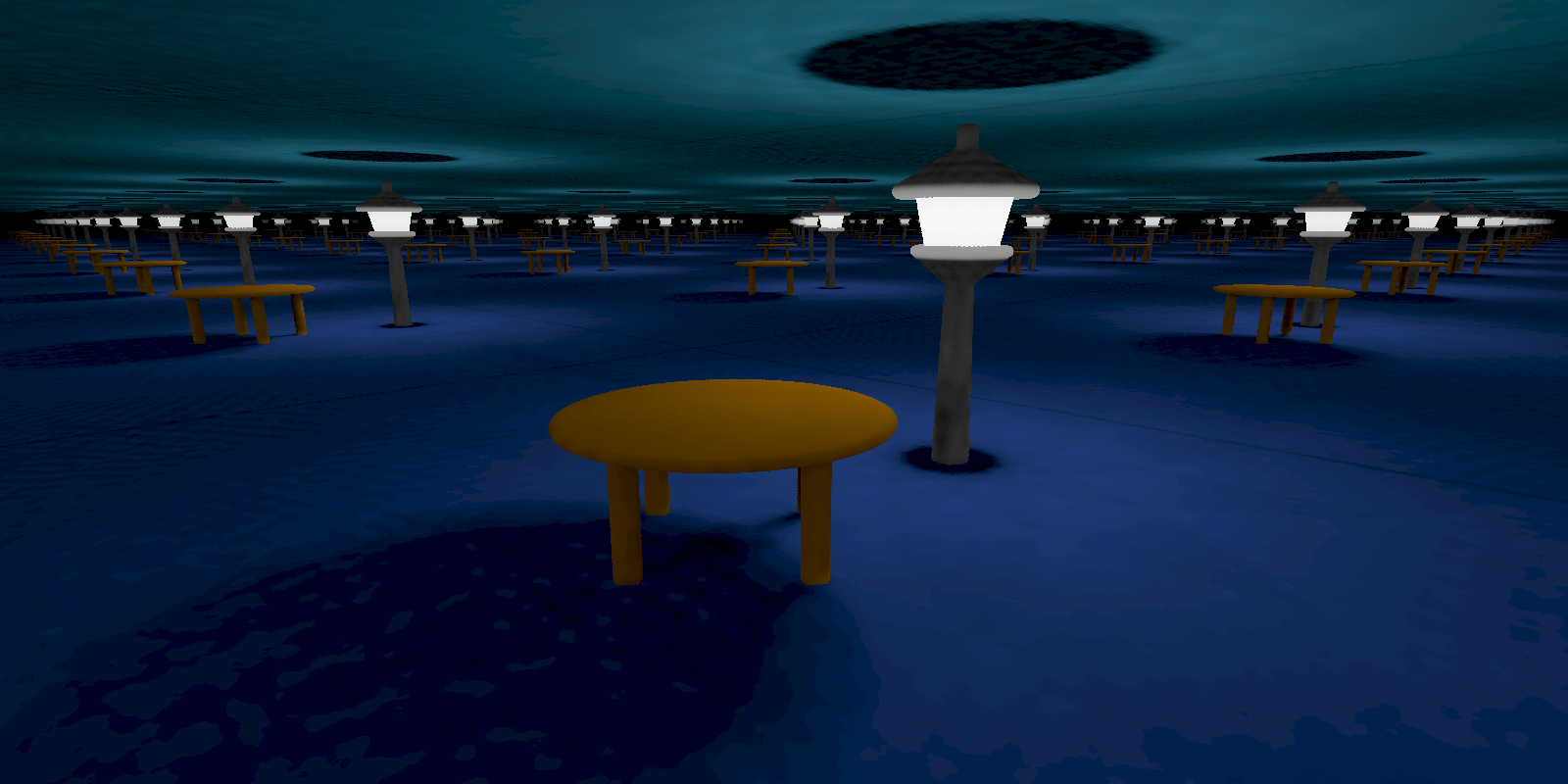}
\includegraphics[totalheight=0.25\textwidth]{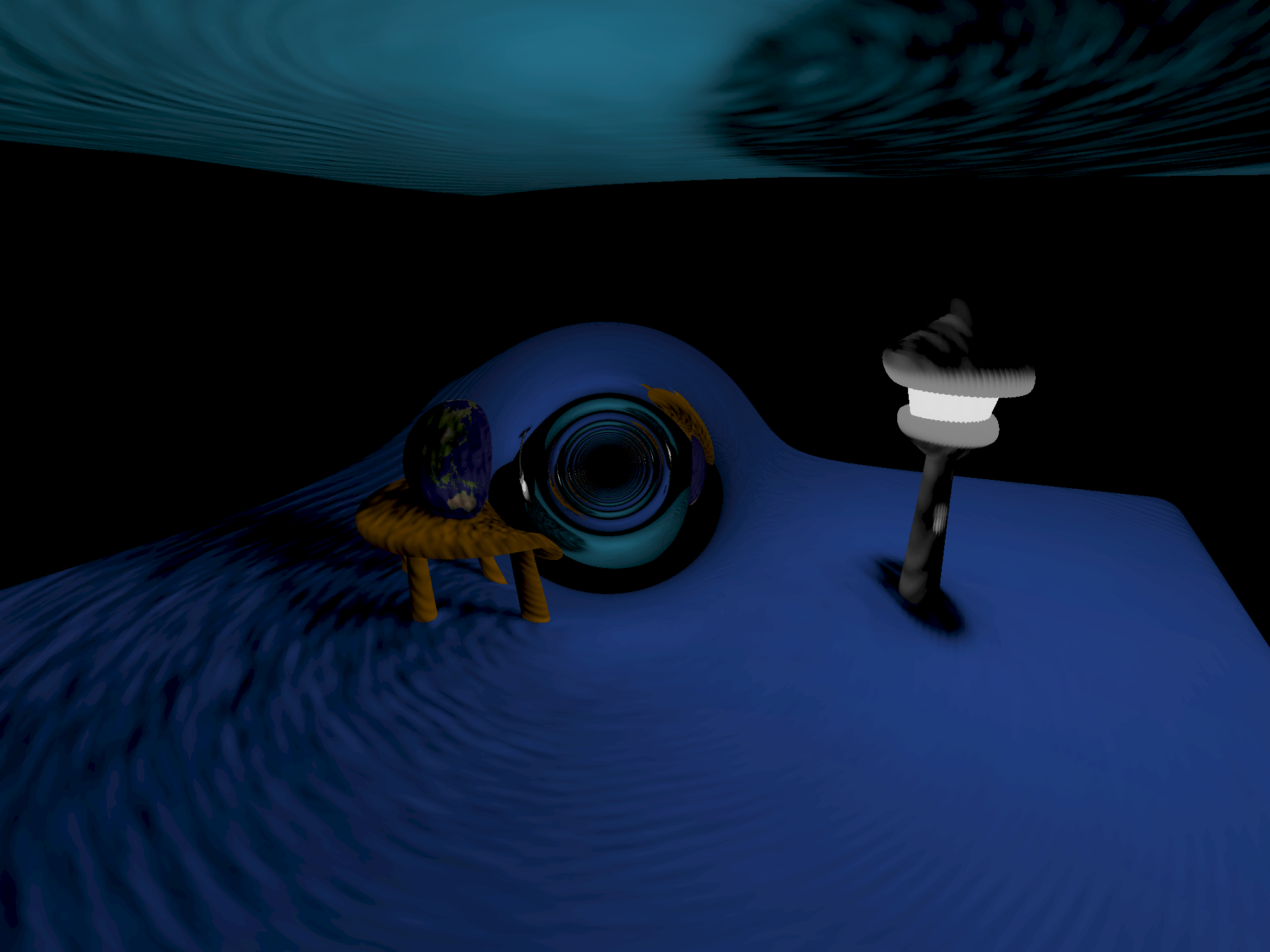}
\par\end{centering}
\caption{Images made using the author's 2008 curved-space ray-tracer \textquotedbl Einstein's
Eye\textquotedbl . It used voxels to model objects and used a finite
sampling of photon trajectories to model lighting. Left: A single
lamp and table in a flat torus, thereby appearing in a lattice of
images. Right: A lamp, table, and globe, with a funnel-shaped spatial
distortion in the center of the room.}
\end{figure}

\subsection{Vision Is Boring If There's Nothing To See}

Producing a subject's first person view from within a manifold is
not particularly interesting if there's nothing to see. While certainly
one could visualize various mathematical structures within the manifold
-- a deep and interesting pursuit in its own right -- there is a
natural motivation to have \textquotedbl physically realistic\textquotedbl{}
objects within the manifold to visualize. For the author, this motivation
came from a background and interest in video game development. The
following are two approaches for generalizing conventional object
representation to Riemannian manifolds.
\begin{itemize}
\item \textbf{Voxels} could be described as \textquotedbl volumetric pixels\textquotedbl ,
and can be conglomerated to form discretizations of arbitrary continuous
objects. In the usual Euclidean setting, a voxel is usually a tiny
cube or sphere. Because the concept of regular polytope that tiles
space has no natural generalization to a generic Riemannian manifold,
using a sphere-shaped voxel is the reasonable choice. However, computing
the precise shape of a geodesic sphere is nontrivial and computationally
expensive. Using a local approximation of a tiny geodesic sphere in
the form of a coordinate ellipsoid is a reasonable compromise, but
this approach does cause the specifics of the coordinate charts to
\textquotedbl leak through\textquotedbl{} to the visualization, and
worse yet, if a voxel transitions from one chart to another, there
will be a discontinuous change in its shape.
\item \textbf{Models constructed from meshes} makes up much of the object
geometry used in computer graphics (in particular, triangulated surfaces).
These models could be placed in the manifold using \textquotedbl non-physical\textquotedbl ,
but still geometrically meaningful, embeddings. For example, mapping
the model into the manifold using the Riemannian exponential. The
origin and orientation of the Riemannian exponential map and the scale
of the model would be the configuration parameters. However, this
approach also suffers from the discontinuous change in shape as it
crosses coordinate chart transitions, because the specific shape of
the surface elements defined by the mesh (e.g. triangles) will be
coordinate-dependent. Furthermore, this approach would produce degenerate,
self-overlapping, and orientation-reversed regions in the mapped model
if it includes a conjugate point of the center.
\end{itemize}
Another approach would be to use \textquotedbl physical\textquotedbl{}
embeddings of objects (either defined in Euclidean space, or in the
manifold itself).

\subsection{What Does \textquotedbl Physical\textquotedbl{} Mean In This Context?}

Euclidean space enjoys a high degree of symmetry, in that its isometry
group is comprised of translations and rotations. This allows for
a meaningful definition of \textquotedbl rigid transformation\textquotedbl ,
and therefore also of \textquotedbl rigid motion\textquotedbl .
From this isometry group, Noether's theorem provides the conservation
laws for linear and angular momenta, giving a tractable and (relatively)
simple subcategory of body mechanics called \textquotedbl rigid body
mechanics\textquotedbl .

A generic Riemannian manifold has a trivial isometry group, and therefore
does not enjoy any kind of rigid body mechanics. Thus it is necessary
to consider the more general category of \textquotedbl continuum
mechanics\textquotedbl . Because this theory is so vast, it would
be wise to consider a more tractable sub-category. Here is a coarse
map of some of the categories within continuum mechanics, each arrow
denoting an inclusion into a more general category.
\[
\begin{array}{ccccc}
\text{continuum mechanics} & \leftarrow & \text{solid mechanics} & \leftarrow & \text{elastic mechanics}\\
\uparrow &  & \uparrow &  & \uparrow\\
\text{fluid mechanics} &  & \text{plastic mechanics} &  & \text{\textbf{hyperelastic mechanics}}
\end{array}
\]

Continuum mechanics is the analysis of the physics of continuous bodies
(i.e. not particle systems). Solid and fluid mechanics are special
cases of continuum mechanics which are distinguished by a body having
a preferred \textquotedblleft rest shape\textquotedblright{} (a solid
does, while a fluid does not, have a rest shape). Solid mechanics
is further specialized into the study of materials which accrue permanent
deformation (plastic) and materials which tend to revert to their
fixed rest shape (elastic). Finally, hyperelastic mechanics is a particular
sub-category of elastic mechanics in which the elastic behavior is
uniquely determined by a stored energy function.

\section{Hyperelastic Mechanics in Riemannian Manifolds}

By construction, hyperelastic mechanics is amenable to use of the
calculus of variations. Various covariant forms of the calculus of
variations have been presented in \citet{Abraham&Marsden}, \citet{Marsden&Hughes},
and \citet{Dods2012,Dods2022}, the latter of which originated the
specific decomposition of $TE\to E$, corresponding definition of
partial covariant derivatives and their use in a Riemannian Calculus
of Variations, showing that the partial covariant derivatives correctly
generalize the ordinary partial derivatives of the Calculus of Variations
on vector spaces.

\subsection{Riemannian Calculus of Variations\protect\label{subsec:Riemannian-Calculus-of-Variations}}

A terse summary of Riemannian Calculus of Variations will be given
here. Details can be found in \citet{Dods2022}. For the reader's
convenience, proofs of the formulas given here can be found in \secref{Proofs}.

While \textquotedbl body\textquotedbl , \textquotedbl space\textquotedbl ,
\textquotedbl body configuration\textquotedbl , and \textquotedbl deformation
tensor\textquotedbl{} are terms specific to continuum mechanics, the
constructions given here are fully general for Lagrangians depending
on the 1-jet of a map between Riemannian manifolds, which is the proper
generalization of first-order Lagrangians in the standard calculus
of variations.

\subsubsection{Preliminaries}

Let $\left(B,g\right)$ and $\left(S,h\right)$ be Riemannian manifolds
of equal dimension, with $B$ compact, having a single connected component.
$\left(B,g\right)$ and $\left(S,h\right)$ represent the rest/reference
configuration of a continuous body, and the space in which it is embedded,
respectively. Let $d\mu_{B}$ and $d\mu_{\partial B}$ denote the
Riemannian volume forms on $\left(B,g\right)$ and $\left(\partial B,\overline{g}\right)$
respectively, where $\overline{g}$ is the metric induced on $\partial B$
by inclusion. A \textbf{body configuration} is an embedding\footnote{This article really only considers immersions, as detecting and preventing
non-local self-intersection of a body is a nontrivial problem whose
solution isn't needed here.}
\begin{align*}
\phi & \in C^{1}\left(B,S\right)
\end{align*}
defining where in space each body point is mapped. Other regularity
classes, such as $C^{2}\left(B,S\right)$ or $H^{1}\left(B,S\right)$,
are considered when appropriate. A \textbf{deformation tensor} is
an element of the vector bundle
\begin{align*}
\pi\colon TS\otimes T^{*}B & \to S\times B.
\end{align*}
An element of $TS\otimes T^{*}B$ is sometimes called a two-point
tensor \citet[pg. 70]{Marsden&Hughes}, given that it has \textquotedbl one
leg\textquotedbl{} at each of two points (in this case, points in
two different manifolds). For brevity, let $E:=TS\otimes T^{*}B$.
A body's \textbf{deformation tensor field} is
\begin{align*}
\nabla^{B\to S}\phi\colon B & \to\phi^{*}TS\otimes T^{*}B
\end{align*}
which is simply the tangent map $T\phi\colon TB\to TS$ expressed
as a tensor field. Note that for each $\phi$, $\phi^{*}TS\otimes T^{*}B$
is a dependent type that includes naturally into $E$, so $\nabla^{B\to S}\phi$
can also be considered as a map of non-dependent type $B\to E$. Define
\textbf{Lagrangian density}
\begin{align*}
L\colon E & \to\mathbb{R}
\end{align*}
and corresponding \textbf{Lagrangian} (action functional)
\begin{align}
\mathcal{L}\left(\phi\right) & :=\int_{B}L\circ\nabla^{B\to S}\phi\,d\mu_{B}.\label{eq:lagrangian-action-functional}
\end{align}

Let
\begin{align*}
\pi_{S} & :=\pi_{S}^{S\times B}\circ\pi\colon E\to S, & \pi_{B} & :=\pi_{B}^{S\times B}\circ\pi\colon E\to B,\\
\sigma & :=\nabla^{E\to S}\pi_{S}\in\Gamma\left(\pi_{S}^{*}TS\otimes T^{*}E\right), & \beta & :=\nabla^{E\to B}\pi_{B}\in\Gamma\left(\pi_{B}^{*}TB\otimes T^{*}E\right).
\end{align*}
The covariant derivatives $\nabla^{TS}$ and $\nabla^{TB}$ naturally
induce a covariant derivative $\nabla^{E}$, which in turn produces
a vertical \textquotedbl projection\textquotedbl
\begin{align*}
\Phi & \in\Gamma\left(\pi^{*}E\otimes T^{*}E\right),\text{ defined by} & e^{*}\Phi\cdot e^{\prime}\left(0\right) & =\nabla_{\frac{d}{dt}\mid_{t=0}}^{\left(\pi\circ e\right)^{*}E}e,
\end{align*}
where $e\colon\left(-\epsilon,\epsilon\right)\to E$ is any $C^{1}$
curve representing the tangent vector $e^{\prime}\left(0\right)\in T_{e\left(0\right)}E$.
Effectively, $\Phi$ turns an ordinary derivative into a covariant
one. Then
\begin{align*}
\sigma\oplus\beta\oplus\Phi\colon TE & \to\pi_{S}^{*}TS\oplus\pi_{B}^{*}TB\oplus\pi^{*}E
\end{align*}
is a vector bundle isomorphism which splits $TE$ into three sub-bundles
\begin{align*}
\ker\left(\beta\oplus\Phi\right) & \cong\pi_{S}^{*}TS, & \ker\left(\sigma\oplus\Phi\right) & \cong\pi_{B}^{*}TB, & \ker\left(\sigma\oplus\beta\right) & \cong\pi^{*}E
\end{align*}
These bundles represent, respectively, variations along the spatial
manifold, along the body manifold, and (vertically) along deformation
gradients. Formalizing this, the differential $dL\in\Gamma\left(T^{*}E\right)$
can be decomposed into the \textbf{partial covariant derivatives}
\begin{align*}
L_{,\sigma} & \in\Gamma\left(\pi_{S}^{*}T^{*}S\right), & L_{,\beta} & \in\Gamma\left(\pi_{B}^{*}T^{*}B\right), & L_{,\Phi} & \in\Gamma\left(\pi^{*}E^{*}\right)=\Gamma\left(\pi_{S}^{*}T^{*}S\otimes\pi_{B}TB\right)
\end{align*}
 of $L$, which are defined uniquely by
\begin{align*}
dL & =L_{,\sigma}\cdot\sigma+L_{,\beta}\cdot\beta+L_{,\Phi}:\Phi,
\end{align*}
where $:$ was used in the last term because $E=TS\otimes T^{*}B$
has two factors. $L_{,\Phi}$ is just the fiber derivative of $L$,
making it relatively easy to compute. Computing $L_{,\sigma}$ involves
pairing $dL$ against $e^{\prime}\left(0\right)\in T_{e\left(0\right)}E$,
which is a variation of a curve $e\colon\left(-\epsilon,\epsilon\right)\to E$
such that $0=e^{*}\beta\cdot e^{\prime}\left(0\right)=\left(\pi_{B}\circ e\right)^{\prime}\left(0\right)$
and $0=e^{*}\Phi\cdot e^{\prime}\left(0\right)=\nabla_{\frac{d}{dt}\mid_{t=0}}^{\left(\pi\circ e\right)^{*}E}e$.
Computing $L_{,\beta}$ is analogous.

\subsubsection{First and Second Variations}

Proofs for the claims in this section can be found in \subsecref{Proofs-for-RCOV}.

In the following, assume maps have sufficient regularity for the expressions
involving derivatives to be well-defined. Let $\psi\in\Gamma\left(\phi^{*}TS\right)$
(a vector field along $\phi$) denote a variation of $\phi$. For
brevity, let\footnote{The specific type analysis here is $\left(\nabla\phi\right)^{*}L_{,\sigma}\in\Gamma\left(\left(\nabla\phi\right)^{*}\pi_{S}^{*}T^{*}S\right)\cong\Gamma\left(\left(\pi_{S}\circ\nabla\phi\right)^{*}T^{*}S\right)=\Gamma\left(\phi^{*}T^{*}S\right)$.
Similar for $\left(\nabla\phi\right)^{*}L_{,\Phi}$.}
\begin{align*}
L_{,\sigma}^{\nabla\phi} & :=\left(\nabla\phi\right)^{*}L_{,\sigma}\in\Gamma\left(\phi^{*}T^{*}S\right), & L_{,\Phi}^{\nabla\phi} & :=\left(\nabla\phi\right)^{*}L_{,\Phi}\in\Gamma\left(\phi^{*}T^{*}S\otimes TB\right).
\end{align*}

\begin{prop}
\label{prop:first-variation-weak-form}The \textbf{first variation
of $\mathcal{L}$ in weak form} is
\begin{align*}
D\mathcal{L}\left(\phi\right)\cdot\psi & =\int_{B}L_{,\sigma}^{\nabla\phi}\cdot\psi+L_{,\Phi}^{\nabla\phi}:\nabla\psi\,d\mu_{B}.
\end{align*}
\end{prop}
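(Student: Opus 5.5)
We realize the variation by a one-parameter family of configurations, differentiate under the integral sign, and then apply the splitting $dL=L_{,\sigma}\cdot\sigma+L_{,\beta}\cdot\beta+L_{,\Phi}:\Phi$ to the velocity of the resulting curve in $E$.

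Concretely, let $\Psi\colon(-\epsilon,\epsilon)\times B\to S$ be $\Psi(t,b):=\exp_{\phi(b)}(t\psi(b))$, or any $C^{1}$ family with $\Psi(0,\cdot)=\phi$ and $\partial_{t}\Psi|_{t=0}=\psi$. Write $\phi_{t}:=\Psi(t,\cdot)$. Then
\[
D\mathcal{L}(\phi)\cdot\psi=\frac{d}{dt}\Big|_{t=0}\int_{B}L\circ\nabla\phi_{t}\,d\mu_{B}.
\]
Since $B$ is compact and the integrand is $C^{1}$ in $(t,b)$, we may differentiate under the integral. The task is then pointwise: for fixed $b\in B$, compute $dL\cdot e'(0)$ for the curve $e(t):=\nabla\phi_{t}(b)\in E$.

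We evaluate $e'(0)$ against each component of the splitting $\sigma\oplus\beta\oplus\Phi$.
\begin{itemize}
\item \textbf{$\beta$-component.} $\pi_{B}\circ e$ is the constant curve $b$, so $e^{*}\beta\cdot e'(0)=0$ and the $L_{,\beta}$ term drops out.
\item \textbf{$\sigma$-component.} $\pi_{S}\circ e(t)=\phi_{t}(b)$, so $e^{*}\sigma\cdot e'(0)=\psi(b)$. This gives the term $L^{\nabla\phi}_{,\sigma}\cdot\psi$.
\item \textbf{$\Phi$-component.} By definition, $e^{*}\Phi\cdot e'(0)=\nabla^{(\pi\circ e)^{*}E}_{\partial_{t}}\nabla\phi_{t}(b)\big|_{t=0}$. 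This is the covariant $t$-derivative of $\nabla^{B\to S}\Psi$ in its $\partial_{t}$ slot, taken along $t\mapsto(\phi_{t}(b),b)$.
\end{itemize}

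The main step is to show that this $\Phi$-component equals $\nabla\psi(b)$, i.e.
\[
\nabla_{\partial_{t}}\bigl(\nabla_{X}\Psi\bigr)=\nabla_{X}\bigl(\partial_{t}\Psi\bigr)
\]
at $t=0$ for $X\in T_{b}B$. Viewing $\Psi$ as a map from $(-\epsilon,\epsilon)\times B$, this is the symmetry of the pulled-back second covariant derivative. That symmetry holds because the Levi-Civita connection of $h$ is torsion-free and the coordinate fields $\partial_{t}$ and $X$ (the latter extended constant in $t$) commute. The $T^{*}B$ factor contributes nothing extra: $b$ is fixed along the curve, so $\nabla^{TB}$ acts along a constant curve in $B$. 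This step needs care because the connection $\nabla^{E}$ is the tensor-product connection on $TS\otimes T^{*}B$ pulled back along $\pi\circ e$. If one wants to avoid the abstract argument, the symmetry can be checked directly in local coordinates using the symmetry of the Christoffel symbols $\Gamma^{k}_{ij}=\Gamma^{k}_{ji}$ of $h$.

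Combining the three components gives, pointwise,
\[
\frac{d}{dt}\Big|_{t=0}L(\nabla\phi_{t})=L^{\nabla\phi}_{,\sigma}\cdot\psi+L^{\nabla\phi}_{,\Phi}:\nabla\psi .
\]
Integrating over $B$ yields the stated weak form. The result is independent of the chosen family $\Psi$, since only $\psi$ and its first covariant derivative appear.
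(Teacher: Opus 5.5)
Your proposal is correct and follows the paper's proof essentially step for step. Both realize $\psi$ by a one-parameter variation $\Psi$, differentiate under the integral, and split $dL$ via $\sigma\oplus\beta\oplus\Phi$. The $\beta$-component vanishes, the $\sigma$-component gives $\psi$, and the $\Phi$-component gives $\nabla\psi$ via the commutation $\nabla_{\partial_{t}}\partial_{B}\Psi=\nabla_{\partial_{B}}\partial_{t}\Psi$; the paper defers that commutation to its reference, while you justify it by torsion-freeness and $[\partial_{t},X]=0$. The only adjustment is to take $\Psi$ of class $C^{2}$, as the paper does, rather than merely $C^{1}$, so that the mixed covariant derivative you commute actually exists.
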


\begin{prop}
\label{prop:first-variation-bulk-boundary-form}The \textbf{first
variation of $\mathcal{L}$ in bulk + boundary form} is
\begin{align*}
D\mathcal{L}\left(\phi\right)\cdot\psi & =\int_{B}\left(L_{,\sigma}^{\nabla\phi}-\text{\emph{div}}L_{,\Phi}^{\nabla\phi}\right)\cdot\psi\,d\mu_{B}+\int_{\partial B}\left(\iota^{*}L_{,\Phi}^{\nabla\phi}\cdot\nu\right)\cdot\iota^{*}\psi\,d\mu_{\partial B},
\end{align*}
where $\iota\colon\partial B\to B$ is the inclusion and $\nu\in\Gamma\left(\iota^{*}T^{*}B\right)$
is the outward unit conormal field on $\partial B$. Note that $\iota^{*}L_{,\Phi}^{\nabla\phi}\in\Gamma\left(\iota^{*}\phi^{*}T^{*}S\otimes\iota^{*}TB\right)$
and $\iota^{*}\psi\in\Gamma\left(\iota^{*}\phi^{*}TS\right)$, which
are the restrictions of $L_{,\Phi}^{\nabla\phi}$ and $\psi$ to $\partial B$.
\end{prop}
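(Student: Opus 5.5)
We start from the weak form of \propref{first-variation-weak-form} and integrate the second term by parts. The only tool needed is a covariant product rule for the natural pairing of $\phi^{*}T^{*}S\otimes TB$ with $\phi^{*}TS\otimes T^{*}B$, followed by the divergence theorem on $(B,g)$.

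\textbf{Product rule.} Let $\alpha:=L_{,\Phi}^{\nabla\phi}\in\Gamma\left(\phi^{*}T^{*}S\otimes TB\right)$. Contracting the $\phi^{*}T^{*}S$ leg of $\alpha$ against $\psi$ gives the vector field
\[
X:=\alpha\cdot\psi\in\Gamma\left(TB\right).
\]
The covariant derivative on $\phi^{*}T^{*}S\otimes TB$ is induced by the pullback of the Levi-Civita connection of $h$ and by the Levi-Civita connection of $g$. It is therefore compatible with contraction, which gives the Leibniz identity
\[
\nabla X=\left(\nabla\alpha\right)\cdot\psi+\alpha\cdot\nabla\psi .
\]
Taking the trace over $B$ (contracting the $TB$ leg of $\alpha$ with the derivative index) yields
\[
\text{div}X=\left(\text{div}\,\alpha\right)\cdot\psi+\alpha:\nabla\psi .
\]
Here $\text{div}\,\alpha$ is defined as the trace of $\nabla\alpha\in\Gamma\left(\phi^{*}T^{*}S\otimes TB\otimes T^{*}B\right)$ over the last two slots. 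This is the definition I would make explicit, since the proposition uses the notation without defining it.

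To confirm the identity concretely, pick local coordinates $x^{i}$ on $B$ and $y^{a}$ on $S$, and write $\alpha=\alpha_{a}{}^{i}$ and $\psi=\psi^{a}$. Then
\[
\nabla_{j}\alpha_{a}{}^{i}=\partial_{j}\alpha_{a}{}^{i}+\Gamma_{jk}^{i}\alpha_{a}{}^{k}-\partial_{j}\phi^{b}\,\Gamma_{ba}^{c}\alpha_{c}{}^{i},
\]
\[
\nabla_{j}\psi^{a}=\partial_{j}\psi^{a}+\partial_{j}\phi^{b}\,\Gamma_{bc}^{a}\psi^{c}.
\]
In the product, the Christoffel terms of $S$ cancel, leaving the ordinary coordinate divergence of $X^{i}=\alpha_{a}{}^{i}\psi^{a}$.

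\textbf{Divergence theorem.} Substituting into the weak form,
\[
D\mathcal{L}\left(\phi\right)\cdot\psi=\int_{B}\left(L_{,\sigma}^{\nabla\phi}-\text{div}\,\alpha\right)\cdot\psi\,d\mu_{B}+\int_{B}\text{div}X\,d\mu_{B}.
\]
Because $B$ is compact with boundary, the Riemannian divergence theorem gives
\[
\int_{B}\text{div}X\,d\mu_{B}=\int_{\partial B}\nu\cdot\iota^{*}X\,d\mu_{\partial B},
\]
where $\nu$ is the outward unit conormal. By construction $\nu\cdot\iota^{*}X=\left(\iota^{*}\alpha\cdot\nu\right)\cdot\iota^{*}\psi$, which is the stated boundary term.

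\textbf{Main obstacle.} I expect the only delicate point to be regularity. The integration by parts requires $\alpha$ to be $C^{1}$, which needs $\phi\in C^{2}$ and $L$ to be $C^{2}$. For $H^{1}$ data one would instead argue by density or interpret the formula distributionally. The remaining work is bookkeeping of the index conventions for the contractions $\cdot$ and $:$.
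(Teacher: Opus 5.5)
Your proposal is correct and follows essentially the same route as the paper's proof. Both start from the weak form, rewrite $L_{,\Phi}^{\nabla\phi}:\nabla\psi$ via the covariant Leibniz rule as $\text{div}\left(\psi\cdot L_{,\Phi}^{\nabla\phi}\right)-\left(\text{div}L_{,\Phi}^{\nabla\phi}\right)\cdot\psi$, and then apply the divergence theorem on $(B,g)$. One wording fix in your coordinate check: only the Christoffel symbols of $S$ cancel, so what remains is the Riemannian divergence $\partial_{i}X^{i}+\Gamma_{ik}^{i}X^{k}$ on $(B,g)$, not the plain coordinate divergence $\partial_{i}X^{i}$.
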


The partial covariant derivative allows the covariant Hessian $\nabla^{2}L\in\Gamma\left(T^{*}E\otimes T^{*}E\right)$
to be decomposed uniquely into a \textquotedbl matrix\textquotedbl{}
of second partial covariant derivatives. In the following, the permutation
superscripts indicate permutation of the tensor factors. For example,
recalling that $\sigma\in\Gamma\left(\pi_{S}^{*}TS\otimes T^{*}E\right)$,
it follows that $\sigma^{\left(1\,2\right)}\in\Gamma\left(T^{*}E\otimes\pi_{S}^{*}TS\right)$.
\begin{align*}
\nabla^{2}L & =\left[\begin{array}{ccc}
\sigma^{\left(1\,2\right)}\cdot & \beta^{\left(1\,2\right)}\cdot & \Phi^{\left(1\,3\right)\left(2\,4\right)}:\end{array}\right]\left[\begin{array}{ccc}
L_{,\sigma\sigma} & L_{,\sigma\beta} & L_{,\sigma\Phi}\\
L_{,\beta\sigma} & L_{,\beta\beta} & L_{,\beta\Phi}\\
L_{,\Phi\sigma} & L_{,\Phi\beta} & L_{,\Phi\Phi}
\end{array}\right]\left[\begin{array}{c}
\cdot\sigma\\
\cdot\beta\\
:\Phi
\end{array}\right]
\end{align*}
which, due to the symmetry of $\nabla^{2}L$, enjoys symmetries
\begin{align*}
\left(L_{,\sigma\sigma}\right)^{ij} & =\left(L_{,\sigma\sigma}\right)^{ji}, & \left(L_{,\sigma\beta}\right)^{ij} & =\left(L_{,\beta\sigma}\right)^{ji}, & \left(L_{,\sigma\Phi}\right)^{ijk} & =\left(L_{,\Phi\sigma}\right)^{jki},\\
 &  & \left(L_{,\beta\beta}\right)^{ij} & =\left(L_{,\beta\beta}\right)^{ji}, & \left(L_{,\beta\Phi}\right)^{ijk} & =\left(L_{,\Phi\beta}\right)^{jki},\\
 &  &  &  & \left(L_{,\Phi\Phi}\right)^{ijk\ell} & =\left(L_{,\Phi\Phi}\right)^{k\ell ij}.
\end{align*}
The iterated subscript notation is justified by the fact that the
second partial covariant derivatives can also be computed directly
from the first partial covariant derivatives. For example, $\nabla L_{,\sigma}\in\Gamma\left(\pi_{S}^{*}T^{*}S\otimes T^{*}E\right)$
decomposes uniquely as
\begin{align*}
\nabla L_{,\sigma} & =L_{,\sigma\sigma}\cdot\sigma+L_{,\sigma\beta}\cdot\beta+L_{,\sigma\Phi}:\Phi.
\end{align*}
Finally, for brevity, let
\begin{align*}
L_{,\sigma\sigma}^{\nabla\phi} & :=\left(\nabla\phi\right)^{*}L_{,\sigma\sigma}\in\Gamma\left(\phi^{*}T^{*}S\otimes\phi^{*}T^{*}S\right), & L_{,\sigma\Phi}^{\nabla\phi} & :=\left(\nabla\phi\right)^{*}L_{,\sigma\Phi}\in\Gamma\left(\phi^{*}T^{*}S\otimes\phi^{*}T^{*}S\otimes TB\right),\\
L_{,\Phi\sigma}^{\nabla\phi} & :=\left(\nabla\phi\right)^{*}L_{,\Phi\sigma}\in\Gamma\left(\phi^{*}T^{*}S\otimes TB\otimes\phi^{*}T^{*}S\right), & L_{,\Phi\Phi}^{\nabla\phi} & :=\left(\nabla\phi\right)^{*}L_{,\Phi\Phi}\in\Gamma\left(\phi^{*}T^{*}S\otimes TB\otimes\phi^{*}T^{*}S\otimes TB\right).
\end{align*}

\begin{prop}
\label{prop:second-variation-weak-form}If $D\mathcal{L}\left(\phi\right)=0$,
then the \textbf{second variation of $\mathcal{L}$ in weak form}
is
\begin{align*}
\nabla^{2}\mathcal{L}\left(\phi\right):\left(\psi\otimes\psi\right) & =\int_{B}\left[\begin{array}{cc}
\psi\cdot & \nabla\psi:\end{array}\right]\left[\begin{array}{cc}
L_{,\sigma\sigma}^{\nabla\phi} & L_{,\sigma\Phi}^{\nabla\phi}\\
L_{,\Phi\sigma}^{\nabla\phi} & L_{,\Phi\Phi}^{\nabla\phi}
\end{array}\right]\left[\begin{array}{c}
\cdot\psi\\
:\nabla\psi
\end{array}\right]+L_{,\Phi}^{\nabla\phi}:\left(\phi^{*}R^{TS}\vdots\left(\psi\otimes\psi\otimes\nabla\phi\right)\right)\,d\mu_{B},
\end{align*}
where $\nabla^{2}\mathcal{L}:=\nabla D\mathcal{L}$ is the covariant
Hessian of $\mathcal{L}$ (see \citet{Eliasson}) and
\begin{align*}
R^{TS} & \in\Gamma\left(TS\otimes T^{*}S\otimes T^{*}S\otimes T^{*}S\right)
\end{align*}
is the Riemannian curvature tensor on $TS$ using the conventions
\begin{align*}
\omega\cdot R^{TS}\vdots\left(Z\otimes X\otimes Y\right) & =\omega\cdot R_{\text{op}}^{TS}\left(X,Y\right)Z,\\
R_{\text{op}}^{TS}\left(X,Y\right) & :=\nabla_{X}^{TS}\nabla_{Y}^{TS}-\nabla_{Y}^{TS}\nabla_{X}^{TS}-\nabla_{\left[X,Y\right]}^{TS},
\end{align*}
for all $\omega\in\Gamma\left(T^{*}S\right)$ and $X,Y,Z\in\Gamma\left(TS\right)$.
\end{prop}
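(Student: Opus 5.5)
Take a smooth two-parameter variation $\phi_{s,t}\colon B\to S$ with $\phi_{0,0}=\phi$, $\partial_{s}\phi_{s,t}|_{0}=\psi$ and $\partial_{t}\phi_{s,t}|_{0}=\psi$. Arrange it so that $t\mapsto\phi_{0,t}(b)$ is a geodesic in $S$ for each $b\in B$; for instance, take $\phi_{s,t}(b)=\exp_{\phi(b)}\bigl((s+t)\psi(b)\bigr)$. Since $D\mathcal{L}(\phi)=0$, the covariant Hessian reduces to the ordinary second derivative along a geodesic, so $\nabla^{2}\mathcal{L}(\phi):(\psi\otimes\psi)=\frac{d^{2}}{dt^{2}}\big|_{t=0}\mathcal{L}(\phi_{0,t})$. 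In fact, under this hypothesis the connection term of Eliasson's Hessian vanishes for any choice of the second variation field. It is cleaner, though, to differentiate the weak first variation of \propref{first-variation-weak-form} along the curve $\phi_{t}:=\phi_{0,t}$ with the field $\psi_{t}:=\partial_{t}\phi_{t}$:
\[
\nabla^{2}\mathcal{L}(\phi):(\psi\otimes\psi)=\frac{d}{dt}\Big|_{t=0}\int_{B}L_{,\sigma}^{\nabla\phi_{t}}\cdot\psi_{t}+L_{,\Phi}^{\nabla\phi_{t}}:\nabla\psi_{t}\,d\mu_{B}.
\]
Because $\phi_{t}$ is geodesic in $t$, the identity $\nabla_{\partial_{t}}\psi_{t}=0$ holds pointwise. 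Here $\nabla_{\partial_{t}}$ is the pullback connection on $\phi_{t}^{*}TS\otimes T^{*}B$ over $B\times(-\epsilon,\epsilon)$.

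Next, differentiate under the integral, which is legitimate because $d\mu_{B}$ does not depend on $t$, using the Leibniz rule for the pullback connection. This produces three groups of terms.

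First, there are derivatives of the coefficients $L_{,\sigma}^{\nabla\phi_{t}}$ and $L_{,\Phi}^{\nabla\phi_{t}}$. By the chain rule, $\nabla_{\partial_{t}}\bigl((\nabla\phi_{t})^{*}L_{,\sigma}\bigr)=(\nabla L_{,\sigma})\cdot\partial_{t}(\nabla\phi_{t})$. The decomposition $\nabla L_{,\sigma}=L_{,\sigma\sigma}\cdot\sigma+L_{,\sigma\beta}\cdot\beta+L_{,\sigma\Phi}:\Phi$ then applies. For the tangent vector $\partial_{t}(\nabla\phi_{t})\in TE$, each projection can be read off directly:
\begin{itemize}
\item $\sigma$ gives $\psi_{t}$;
\item $\beta$ gives $0$, since the body point is fixed;
\item $\Phi$ gives $\nabla_{\partial_{t}}\nabla\phi_{t}$.
\end{itemize}
The analogous computation holds for $L_{,\Phi}$.

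Second, there are derivatives of the fields. The term $\nabla_{\partial_{t}}\psi_{t}$ vanishes by the geodesic choice. The term $\nabla_{\partial_{t}}\nabla\psi_{t}$ requires commuting $\nabla_{\partial_{t}}$ past $\nabla^{B}$.

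Third, the torsion-freeness identity $\nabla_{\partial_{t}}\nabla\phi_{t}=\nabla\psi_{t}$ holds, by the symmetry lemma for two-parameter maps.

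Collecting the terms, the coefficient derivatives yield exactly the block quadratic form with entries $L_{,\sigma\sigma}$, $L_{,\sigma\Phi}$, $L_{,\Phi\sigma}$ and $L_{,\Phi\Phi}$ applied to $(\psi,\nabla\psi)$.

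The main obstacle is the commutator. I expect
\[
\nabla_{\partial_{t}}\nabla_{X}\psi_{t}-\nabla_{X}\nabla_{\partial_{t}}\psi_{t}=\phi^{*}R^{TS}\vdots\bigl(\psi\otimes\psi\otimes\nabla\phi\cdot X\bigr),
\]
which is the curvature of the pullback bundle $(\phi_{t})^{*}TS$ over $B\times(-\epsilon,\epsilon)$, evaluated on $(\partial_{t},X)$. Since $[\partial_{t},X]=0$ and $\nabla_{\partial_{t}}\psi_{t}=0$, this gives $\nabla_{\partial_{t}}\nabla\psi_{t}=\phi^{*}R^{TS}\vdots(\psi\otimes\psi\otimes\nabla\phi)$. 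Contracting with $L_{,\Phi}^{\nabla\phi}$ then produces the curvature term.

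The care needed here is in the sign and slot conventions. The pullback curvature satisfies $R^{\phi^{*}TS}(\partial_{t},X)=R_{\text{op}}^{TS}(\psi,\nabla\phi\cdot X)$. Under the stated convention $\omega\cdot R^{TS}\vdots(Z\otimes X\otimes Y)=\omega\cdot R_{\text{op}}^{TS}(X,Y)Z$, this must match the ordering $(\psi\otimes\psi\otimes\nabla\phi)$. I will verify this in local coordinates before finalizing.

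Finally, I will justify that this choice of curve computes the full covariant Hessian. The Hessian is symmetric and bilinear, and $D\mathcal{L}(\phi)=0$ kills the dependence on the second-order part of the variation. Polarization then recovers $\nabla^{2}\mathcal{L}(\phi):(\psi_{1}\otimes\psi_{2})$ if it is needed.
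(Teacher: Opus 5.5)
Your proposal is correct and follows essentially the paper's argument. You differentiate the weak first variation along a variation, decompose $\nabla L_{,\sigma}$ and $\nabla L_{,\Phi}$ via $\sigma,\beta,\Phi$ (with $\beta$-part zero and $\nabla_{\partial_t}\nabla\phi_t=\nabla\psi_t$), and commute $\nabla_{\partial_t}$ past $\nabla_{\partial_B}$ to obtain the curvature term; your slot ordering $R_{\text{op}}^{TS}(\psi,\nabla\phi\cdot X)\psi=R^{TS}\vdots(\psi\otimes\psi\otimes\nabla\phi\cdot X)$ is right. The only difference is that you kill the $\nabla_{\partial_t}\psi_t$ terms pointwise with the geodesic variation (also the paper's example), whereas the paper allows a general two-parameter variation and disposes of them as $D\mathcal{L}(\phi)\cdot\theta=0$; your version shows the hypothesis $D\mathcal{L}(\phi)=0$ is never actually needed for the covariant Hessian.
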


\subsubsection{Symmetries of the Lagrangian}

Let $G_{t}\colon S\to S$ denote a one-parameter family of maps, and
let $\phi_{t}:=G_{t}\circ\phi$. If $L\circ\nabla\phi_{t}=L\circ\nabla\phi$
for all $t$, then $\mathcal{L}\left(\phi_{t}\right)=\mathcal{L}\left(\phi\right)$,
showing that $G_{t}$ describes a symmetry of the problem, i.e.
\begin{align*}
0 & =\frac{d}{dt}\mathcal{L}\left(\phi\right)\mid_{t=0}=\frac{d}{dt}\mathcal{L}\left(\phi_{t}\right)\mid_{t=0}=D\mathcal{L}\left(\phi\right)\cdot\frac{d}{dt}\phi_{t}\mid_{t=0}.
\end{align*}
In particular,
\begin{align*}
\frac{d}{dt}\phi_{t}\mid_{t=0} & =\left(\frac{d}{dt}G_{t}\mid_{t=0}\right)\circ\phi\in\Gamma\left(\phi^{*}TS\right)
\end{align*}
is a vector field along $\phi$ that is the infinitesimal generator
of the symmetry at $\phi$, and $\frac{d}{dt}G_{t}\mid_{t=0}$ is
a map of the form $S\to TS$ that can be used to quantify the symmetries
of the problem in terms of the spatial manifold alone.

Now assume that $\phi$ is a critical point of $\mathcal{L}$, i.e.
$D\mathcal{L}\left(\phi\right)=0$. Let $\psi:=\frac{d}{dt}\phi_{t}\mid_{t=0}$
be an infinitesimal generator of the symmetry at $\phi$. Then
\begin{align*}
0 & =\frac{d}{dt}\frac{d}{ds}\mathcal{L}\left(\phi\right)\mid_{s=0}\mid_{t=0}\\
 & =\frac{d}{dt}\frac{d}{ds}\mathcal{L}\left(\phi_{t+s}\right)\mid_{s=0}\mid_{t=0}\\
 & =\frac{d}{dt}D\mathcal{L}\left(\phi_{t}\right)\cdot\left(\frac{d}{ds}\phi_{t+s}\mid_{s=0}\right)\mid_{t=0}\\
 & =\nabla^{2}\mathcal{L}\left(\phi\right):\left(\left(\frac{d}{ds}\phi_{s}\mid_{s=0}\right)\otimes\left(\frac{d}{dt}\phi_{t}\mid_{t=0}\right)\right)+D\mathcal{L}\left(\phi\right)\cdot\nabla_{\frac{d}{dt}}\left(\frac{d}{ds}\phi_{t+s}\mid_{s=0}\right)\mid_{t=0}\\
 & =\nabla^{2}\mathcal{L}\left(\phi\right):\left(\psi\otimes\psi\right),
\end{align*}
showing that $\psi$ is an eigenvector of $\nabla^{2}\mathcal{L}\left(\phi\right)$
with eigenvalue 0, as expected. Thus the stability analysis for an
equilibrium solution $\phi$ should exclude the subspace generated
by infinitesimal symmetries of the problem.

\subsection{Hyperelastic Mechanics\protect\label{subsec:Hyperelastic-Mechanics}}

As a sub-category of continuum mechanics, hyperelastic mechanics is
defined by the existence of a stored energy density function that
uniquely determines the body's response to deformation. This makes
it amenable to a variational formulation.

This section will apply the Riemannian Calculus of Variations to pose
a general form of the static problem addressed by this article. The
concrete problem follows by making particular choices for the various
manifolds and functions involved. As before, let $\phi\colon B\to S$
be a body configuration, having as much regularity as is needed for
the relevant expressions.

\subsubsection{A Particular Lagrangian\protect\label{subsec:A-Particular-Lagrangian}}

Let the \textbf{stored energy density} of the hyperelastic body be
\begin{align*}
W\colon E & \to\mathbb{R}.
\end{align*}
Consider spatial forces arising due to a \textbf{mass-specific potential}
(e.g. gravitational potential)
\begin{align*}
U\colon S & \to\mathbb{R},
\end{align*}
defined throughout space, that couples with the body's \textbf{mass
density}
\begin{align*}
\rho\colon B & \to\mathbb{R}_{+}
\end{align*}
to produce the body's \textbf{potential energy density}
\begin{align*}
V\colon S\times B & \to\mathbb{R},\\
\left(s,b\right) & \mapsto U\left(s\right)\rho\left(b\right),
\end{align*}
where the pair $\left(s,b\right)$ denotes a specific mapping of a
body point $b\in B$ to a spatial point $s\in S$. Define the Lagrangian
density to be the sum of these energy densities, i.e.
\begin{align*}
L\colon E & \to\mathbb{R},\\
F & \mapsto W\left(F\right)+V\left(\pi\left(F\right)\right).
\end{align*}
This gives the Lagrangian (action functional)
\begin{align*}
\mathcal{L}\left(\phi\right) & :=\int_{B}L\circ\nabla^{B\to S}\phi\,d\mu_{B}
\end{align*}
just as defined in \eqref{lagrangian-action-functional}.

The formulas for the variations of this specific Lagrangian are straightforward
corollaries of the general formulas. Proofs for these claims can be
found in \subsecref{Proofs-for-Hyperelastic-Mechanics}. Let $\psi\in\Gamma\left(\phi^{*}TS\right)$
be a variation of body configuration $\phi\colon B\to S$.
\begin{cor}[First variation of specific $\mathcal{L}$ in weak form]
\label{cor:mechanical-first-variation-in-weak-form}
\begin{align*}
D\mathcal{L}\left(\phi\right)\cdot\psi & =\int_{B}\left(W_{,\sigma}^{\nabla\phi}+\rho\phi^{*}dU\right)\cdot\psi+W_{,\Phi}^{\nabla\phi}:\nabla\psi\,d\mu_{B}.
\end{align*}
\end{cor}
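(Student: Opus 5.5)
The plan is to specialize \propref{first-variation-weak-form} to $L=W+V\circ\pi$. The general formula gives
\begin{align*}
D\mathcal{L}\left(\phi\right)\cdot\psi & =\int_{B}L_{,\sigma}^{\nabla\phi}\cdot\psi+L_{,\Phi}^{\nabla\phi}:\nabla\psi\,d\mu_{B},
\end{align*}
so everything reduces to computing the partial covariant derivatives $L_{,\sigma}$ and $L_{,\Phi}$ of the sum. The decomposition $dL=L_{,\sigma}\cdot\sigma+L_{,\beta}\cdot\beta+L_{,\Phi}:\Phi$ is unique and $d$ is linear. Hence the partial covariant derivatives are additive: $L_{,\sigma}=W_{,\sigma}+(V\circ\pi)_{,\sigma}$, and similarly for $\beta$ and $\Phi$.

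Next I decompose $d(V\circ\pi)$. Since $V(s,b)=U(s)\rho(b)$, the function $V\circ\pi$ on $E$ equals $(U\circ\pi_{S})(\rho\circ\pi_{B})$. Its differential is
\begin{align*}
d\left(V\circ\pi\right) & =\left(\rho\circ\pi_{B}\right)\pi_{S}^{*}dU\cdot d\pi_{S}+\left(U\circ\pi_{S}\right)\pi_{B}^{*}d\rho\cdot d\pi_{B}.
\end{align*}
By definition, $d\pi_{S}$ regarded as a tensor is exactly $\sigma=\nabla^{E\to S}\pi_{S}$, and $d\pi_{B}$ is $\beta$. So
\begin{align*}
d\left(V\circ\pi\right) & =\left(\rho\circ\pi_{B}\right)\pi_{S}^{*}dU\cdot\sigma+\left(U\circ\pi_{S}\right)\pi_{B}^{*}d\rho\cdot\beta+0:\Phi.
\end{align*}
By uniqueness of the decomposition, this gives
\begin{align*}
\left(V\circ\pi\right)_{,\sigma} & =\left(\rho\circ\pi_{B}\right)\pi_{S}^{*}dU, & \left(V\circ\pi\right)_{,\Phi} & =0.
\end{align*}
The vanishing of the $\Phi$-part reflects the fact that $V\circ\pi$ is constant along fibers, since $L_{,\Phi}$ is the fiber derivative.

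Finally I pull back along $\nabla\phi$. Because $\pi_{S}\circ\nabla\phi=\phi$ and $\pi_{B}\circ\nabla\phi=\mathrm{id}_{B}$, we get $(\nabla\phi)^{*}\bigl((\rho\circ\pi_{B})\pi_{S}^{*}dU\bigr)=\rho\,\phi^{*}dU$, using the identification $(\nabla\phi)^{*}\pi_{S}^{*}T^{*}S\cong\phi^{*}T^{*}S$ from the paper's footnote. Thus $L_{,\sigma}^{\nabla\phi}=W_{,\sigma}^{\nabla\phi}+\rho\phi^{*}dU$ and $L_{,\Phi}^{\nabla\phi}=W_{,\Phi}^{\nabla\phi}$. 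Substituting these into \propref{first-variation-weak-form} yields the corollary.

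The only step needing care is the identification of $d\pi_{S}$ with $\sigma$ and $d\pi_{B}$ with $\beta$, together with the check that the $\Phi$-component of $d(V\circ\pi)$ vanishes. Equivalently, one verifies directly that along a curve $e$ with $(\pi_{B}\circ e)'(0)=0$ and $\nabla e=0$, the derivative of $V\circ\pi\circ e$ is $\rho\,dU\cdot(\pi_{S}\circ e)'(0)$, and that along purely vertical curves the derivative is zero. This follows immediately from the product structure of $V$, so I expect no real obstacle.
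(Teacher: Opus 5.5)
Your proposal is correct and follows the paper's own proof essentially step for step. The paper also writes $L=W+(\pi_{S}^{*}U)(\pi_{B}^{*}\rho)$ and expands $dL$ by the product rule with $\sigma=\nabla^{E\to S}\pi_{S}$ and $\beta=\nabla^{E\to B}\pi_{B}$. It then reads off $L_{,\sigma}=W_{,\sigma}+(\pi_{B}^{*}\rho)(\pi_{S}^{*}dU)$ and $L_{,\Phi}=W_{,\Phi}$ from uniqueness of the decomposition, and pulls back along $\nabla\phi$ using $\pi_{S}\circ\nabla\phi=\phi$ and $\pi_{B}\circ\nabla\phi=\mathrm{Id}_{B}$ before substituting into the general weak-form first variation.
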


\begin{cor}[First variation of specific $\mathcal{L}$ in bulk + boundary form]
\label{cor:mechanical-first-variation-in-bulk-boundary-form} 
\begin{align*}
D\mathcal{L}\left(\phi\right)\cdot\psi & =\int_{B}\left(W_{,\sigma}^{\nabla\phi}+\rho\phi^{*}dU-\text{div}W_{,\Phi}^{\nabla\phi}\right)\cdot\psi\,d\mu_{B}+\int_{\partial B}\left(\iota^{*}W_{,\Phi}^{\nabla\phi}\cdot\nu\right)\cdot\iota^{*}\psi\,d\mu_{\partial B}.
\end{align*}
\end{cor}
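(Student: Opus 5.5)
This is a direct specialization of \propref{first-variation-bulk-boundary-form} to $L=W+V\circ\pi$. By linearity of $d$ and of the splitting $dL=L_{,\sigma}\cdot\sigma+L_{,\beta}\cdot\beta+L_{,\Phi}:\Phi$, the partial covariant derivatives are additive. So we have $L_{,\sigma}=W_{,\sigma}+(V\circ\pi)_{,\sigma}$ and $L_{,\Phi}=W_{,\Phi}+(V\circ\pi)_{,\Phi}$. The plan is to compute the partial covariant derivatives of $V\circ\pi$ and substitute them.

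First, I would show that $(V\circ\pi)_{,\Phi}=0$ and $(V\circ\pi)_{,\sigma}=\rho\circ\pi_{B}\,\pi_{S}^{*}dU$. Since $V\circ\pi(F)=U(\pi_{S}(F))\,\rho(\pi_{B}(F))$, we get
\[
d(V\circ\pi)=\rho(\pi_{B})\,dU\cdot\nabla\pi_{S}+U(\pi_{S})\,d\rho\cdot\nabla\pi_{B}=(\rho\circ\pi_{B})\,\pi_{S}^{*}dU\cdot\sigma+(U\circ\pi_{S})\,\pi_{B}^{*}d\rho\cdot\beta.
\]
This expression has no $\Phi$ component. By uniqueness of the decomposition, the $\Phi$-coefficient vanishes and the $\sigma$-coefficient is as claimed; the $\beta$-part does not enter the formula. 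Equivalently, $V\circ\pi$ is constant on fibers of $\pi$, so its fiber derivative is zero.

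Next, I would pull back along $\nabla\phi$. Since $\pi_{S}\circ\nabla\phi=\phi$ and $\pi_{B}\circ\nabla\phi=\mathrm{id}_{B}$, this gives $L_{,\sigma}^{\nabla\phi}=W_{,\sigma}^{\nabla\phi}+\rho\,\phi^{*}dU$ and $L_{,\Phi}^{\nabla\phi}=W_{,\Phi}^{\nabla\phi}$. Substituting these into \propref{first-variation-bulk-boundary-form} yields the bulk term $W_{,\sigma}^{\nabla\phi}+\rho\phi^{*}dU-\mathrm{div}\,W_{,\Phi}^{\nabla\phi}$ and the boundary term $\iota^{*}W_{,\Phi}^{\nabla\phi}\cdot\nu$. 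This is exactly the stated formula.

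An alternative route starts from \corref{mechanical-first-variation-in-weak-form} and integrates $W_{,\Phi}^{\nabla\phi}:\nabla\psi$ by parts via the divergence theorem on $B$, which reproduces the same result. The only point needing care, and the mild obstacle, is justifying that the components of $dV\circ\pi$ really lie in $\ker(\beta\oplus\Phi)$ and $\ker(\sigma\oplus\Phi)$ respectively. This follows from $d(\pi_{S})=\sigma$ and $d(\pi_{B})=\beta$ as maps on $TE$, which is their definition.
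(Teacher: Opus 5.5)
Your proposal is correct and follows the paper's route: the paper obtains $L_{,\sigma}=W_{,\sigma}+(\pi_B^*\rho)(\pi_S^*dU)$ and $L_{,\Phi}=W_{,\Phi}$ from uniqueness of the decomposition of $dL$ (in the proof of the weak-form corollary), pulls back along $\nabla\phi$ using $\pi_B\circ\nabla\phi=\mathrm{id}_B$ and $\pi_S\circ\nabla\phi=\phi$, and substitutes into the general bulk + boundary proposition. One small wording point: the ``obstacle'' you flag is not about components lying in kernels, only that $d(U\circ\pi_S)=\pi_S^*dU\cdot\sigma$ and $d(\rho\circ\pi_B)=\pi_B^*d\rho\cdot\beta$, which hold by the definitions of $\sigma$ and $\beta$.
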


\begin{cor}[Second variation of specific $\mathcal{L}$ in weak form]
\label{cor:mechanical-second-variation-in-weak-form}
\begin{align*}
\nabla^{2}\mathcal{L}\left(\phi\right):\left(\psi\otimes\psi\right) & =\int_{B}\left[\begin{array}{cc}
\psi\cdot & \nabla\psi:\end{array}\right]\left[\begin{array}{cc}
W_{,\sigma\sigma}^{\nabla\phi}+\rho\phi^{*}\nabla^{2}U & W_{,\sigma\Phi}^{\nabla\phi}\\
W_{,\Phi\sigma}^{\nabla\phi} & W_{,\Phi\Phi}^{\nabla\phi}
\end{array}\right]\left[\begin{array}{c}
\cdot\psi\\
:\nabla\psi
\end{array}\right]+W_{,\Phi}^{\nabla\phi}:\left(\phi^{*}R^{TS}\vdots\left(\psi\otimes\psi\otimes\nabla\phi\right)\right)\,d\mu_{B}
\end{align*}
\end{cor}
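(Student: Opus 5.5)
This follows from \propref{second-variation-weak-form} applied to the specific density $L=W+V\circ\pi$. The only real work is computing the second partial covariant derivatives of $L$ in terms of those of $W$ and $U$. The approach is to use linearity of the decomposition $\nabla^{2}L=\nabla^{2}W+\nabla^{2}\left(V\circ\pi\right)$, together with uniqueness of the matrix decomposition of the covariant Hessian into second partial covariant derivatives. It therefore suffices to identify the matrix of second partials of $V\circ\pi$.

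First, recall the first partials of $V\circ\pi$, which were already used for \corref{mechanical-first-variation-in-weak-form}. Since $V\circ\pi\left(F\right)=U\left(\pi_{S}F\right)\rho\left(\pi_{B}F\right)$, the chain rule gives $d\left(V\circ\pi\right)=\left(\rho\circ\pi_{B}\right)\pi_{S}^{*}dU\cdot\sigma+\left(U\circ\pi_{S}\right)\pi_{B}^{*}d\rho\cdot\beta$ with no $\Phi$-component. Hence $(V\circ\pi)_{,\sigma}=(\rho\circ\pi_{B})\,\pi_{S}^{*}dU$, $(V\circ\pi)_{,\beta}=(U\circ\pi_{S})\,\pi_{B}^{*}d\rho$ and $(V\circ\pi)_{,\Phi}=0$.

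Next, compute $\nabla\left(V\circ\pi\right)_{,\sigma}$ and decompose it as in the iterated-subscript remark. Differentiating $\left(\rho\circ\pi_{B}\right)\pi_{S}^{*}dU$ with the pullback connection on $\pi_{S}^{*}T^{*}S$ gives $\left(\rho\circ\pi_{B}\right)\pi_{S}^{*}\nabla^{2}U\cdot\sigma+\pi_{S}^{*}dU\otimes\pi_{B}^{*}d\rho\cdot\beta$. Here I use that the pullback covariant derivative of a pulled-back section is the pullback of the covariant derivative composed with $\sigma=\nabla\pi_{S}$. There is no $\Phi$-term, since nothing depends on the fiber. So $\left(V\circ\pi\right)_{,\sigma\sigma}=\left(\rho\circ\pi_{B}\right)\pi_{S}^{*}\nabla^{2}U$, while $\left(V\circ\pi\right)_{,\sigma\Phi}=0$. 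Similarly, $\left(V\circ\pi\right)_{,\Phi\sigma}=0$ and $\left(V\circ\pi\right)_{,\Phi\Phi}=0$, either by the stated symmetries or because $\left(V\circ\pi\right)_{,\Phi}\equiv0$. The $\beta$-entries are irrelevant because they do not appear in \propref{second-variation-weak-form}.

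Finally, substitute into \propref{second-variation-weak-form}. Pulling back by $\nabla\phi$ gives $L_{,\sigma\sigma}^{\nabla\phi}=W_{,\sigma\sigma}^{\nabla\phi}+\rho\,\phi^{*}\nabla^{2}U$, using $\pi_{S}\circ\nabla\phi=\phi$ and $\pi_{B}\circ\nabla\phi=\mathrm{id}_{B}$. It also gives $L_{,\sigma\Phi}^{\nabla\phi}=W_{,\sigma\Phi}^{\nabla\phi}$ and likewise for the $\Phi\sigma$ and $\Phi\Phi$ entries. In the curvature term, $L_{,\Phi}^{\nabla\phi}=W_{,\Phi}^{\nabla\phi}$. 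The hypothesis $D\mathcal{L}\left(\phi\right)=0$ needed for \propref{second-variation-weak-form} is implicit in the corollary, as it is in the proposition; I would note this explicitly.

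The main obstacle is the middle step. One has to justify carefully that the covariant derivative of $\left(\rho\circ\pi_{B}\right)\pi_{S}^{*}dU$ has exactly the stated $\sigma$ and $\beta$ components and no vertical component. This rests on $\pi_{S}^{*}dU$ being a pullback, so that its derivative along $\ker\left(\sigma\oplus\beta\right)$ and along $\ker\left(\sigma\oplus\Phi\right)$ vanishes. I would check this by evaluating it on curves $e$ in each of the three sub-bundles, as in the definition of the partial covariant derivatives.
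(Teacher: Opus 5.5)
Your proposal is correct and follows essentially the same route as the paper. The paper decomposes $\nabla L_{,\sigma}$ using the product rule and $\nabla\left(\pi_{S}^{*}dU\right)=\pi_{S}^{*}\nabla^{2}U\cdot\sigma$, obtains $L_{,\Phi\sigma}$ by symmetry and $L_{,\Phi\Phi}$ from $L_{,\Phi}=W_{,\Phi}$, then pulls back by $\nabla\phi$ and substitutes into Proposition~\ref{prop:second-variation-weak-form}; isolating $V\circ\pi$ by linearity first is only a cosmetic difference, and you are right that the hypothesis $D\mathcal{L}\left(\phi\right)=0$ is implicitly required.
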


\subsubsection{Spatially Invariant and Homogeneous Materials}

Because $\sigma$ and $\beta$ encode the \textquotedbl along the
spatial manifold\textquotedbl{} and \textquotedbl along the body
manifold\textquotedbl{} sub-bundles of $TE$, the conditions $W_{,\sigma}\equiv0$
and $W_{,\beta}\equiv0$ define a \textbf{spatially invariant} material
and a \textbf{homogeneous} material, respectively. While a physically
realistic $W$ will necessarily be defined in terms of the metrics
on $\left(B,g\right)$ and $\left(S,h\right)$, this is not a-priori
an obstacle to the conditions $W_{,\sigma}\equiv0$ and $W_{,\beta}\equiv0$
due to metric compatibility and the fact that $W_{,\sigma}$ and $W_{,\beta}$
are partial covariant derivatives. It is certainly possible to define
stored energy densities that represent materials that are spatially
non-invariant (e.g. due to a charge-specific potential) or non-homogeneous
(e.g. due to a non-uniform mass density).

As a special case, if it is assumed that $W_{,\sigma}\equiv0$, then
the first variation of $\mathcal{L}$ in weak form is
\begin{align*}
D\mathcal{L}\left(\phi\right)\cdot\psi & =\int_{B}\rho\phi^{*}dU\cdot\psi+W_{,\Phi}^{\nabla\phi}:\nabla\psi\,d\mu_{B}
\end{align*}
and the second variation of $\mathcal{L}$ in weak form is

\begin{align*}
\nabla^{2}\mathcal{L}\left(\phi\right):\left(\psi\otimes\psi\right) & =\int_{B}\psi\cdot\left(\phi^{*}\nabla^{2}U\right)\cdot\psi+\nabla\psi:W_{,\Phi\Phi}^{\nabla\phi}:\nabla\psi+W_{,\Phi}^{\nabla\phi}:\left(\phi^{*}R^{TS}\vdots\left(\psi\otimes\psi\otimes\nabla\phi\right)\right)\,d\mu_{B}.
\end{align*}

\subsubsection{A Physically Reasonable Material\protect\label{subsec:A-Physically-Reasonable-Material}}

Recall that $g\in\Gamma\left(T^{*}B\otimes T^{*}B\right)$ and $h\in\Gamma\left(T^{*}S\otimes T^{*}S\right)$
are the Riemannian metrics on $B$ and $S$, respectively, and $g^{-1}\in\Gamma\left(TB\otimes TB\right)$
is the inverse metric on $B$. Let
\begin{align*}
C\colon TS\otimes T^{*}B & \to TB\otimes T^{*}B,\\
F & \mapsto g^{-1}\left(\pi_{B}\left(F\right)\right)\cdot F^{\left(1\,2\right)}\cdot h\left(\pi_{S}\left(F\right)\right)\cdot F,
\end{align*}
noting that the permutation superscript indicates permutation of tensor
factors. In particular, $F^{\left(1\,2\right)}\in T^{*}B\otimes TS$
is the tensor transpose of $F$. Then $C\left(F\right)$ is the Cauchy
strain tensor corresponding to deformation tensor $F$. By construction,
$C\left(F\right)$ is symmetric and positive-semidefinite (positive-definite
when $F$ has trivial kerel), and quantifies the squared local changes
in distance due to the deformation represented by $F$. If the principal
stretches of $F$ (obtained by its polar decomposition) are $\lambda_{1},\dots,\lambda_{n}$,
then the eigenvalues of $C\left(F\right)$ are $\lambda_{1}^{2},\dots,\lambda_{n}^{2}$.
A hyperelastic material that is frame-indifferent and isotropic has
an energy density function that factors through $C$ and furthermore,
is a function of the tensor invariants of $C\left(F\right)$ -- the
first and last of which are $\text{tr}C\left(F\right)$ and $\det C\left(F\right)$.

Following \citet{lehmich_neff_lankeit_2013}, the \textbf{uni-constant
compressible Neo-Hooke material} is defined by

\begin{align*}
W\left(F\right) & :=\alpha\left(\text{tr}\left(C\left(F\right)-I\right)-\log\det C\left(F\right)\right),
\end{align*}
where $\alpha>0$ is the \textquotedbl shear modulus\textquotedbl{}
or \textquotedbl modulus of rigidity\textquotedbl . The function
$C\mapsto\alpha\left(\text{tr}\left(C-I\right)-\log\det C\right)$
is convex in $C$, but $W$ is only polyconvex in $F$, a notion introduced
in \citet{ball_1976}. This material is \textquotedbl physically
reasonable\textquotedbl{} in the sense that forces arise within the
body to restore it to its rest configuration (at which the forces
are zero), that those forces increase with the magnitude of deformation,
and that the forces become infinite when compression or elongation
becomes infinite.

In terms of the principal stretches $\lambda_{1},\dots,\lambda_{n}$
of $F$,
\begin{align*}
W\left(F\right) & =\overline{W}\left(\lambda_{1},\dots,\lambda_{n}\right):=\alpha\left(w\left(\lambda_{1}^{2}\right)+\dots+w\left(\lambda_{n}^{2}\right)\right),
\end{align*}
where $w\left(\lambda\right):=\lambda-1-\log\lambda$, which is a
convex function with global minimum at $\lambda=1$, and $w\to\infty$
as $\lambda\to0$ or $\lambda\to\infty$. The function $\overline{W}$
is convex in $\lambda_{1},\dots,\lambda_{n}$, which gives a nice
way to understand the behavior of $W$.
\begin{prop}[Spatial invariance and homogeneity of uni-constant compressible Neo-Hooke
material]
\label{prop:W-is-spatially-invariant-and-homogeneous}$W$ defines
a spatially invariant material, i.e. $W_{,\sigma}\equiv0$. $W$ defines
a homogeneous material, i.e. $W_{,\beta}\equiv0$.
\end{prop}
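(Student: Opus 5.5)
To prove that $W_{,\sigma}\equiv0$, we compute $dW$ against a tangent vector $e^{\prime}\left(0\right)\in T_{e\left(0\right)}E$ lying in $\ker\left(\beta\oplus\Phi\right)$. The same computation, applied to $\ker\left(\sigma\oplus\Phi\right)$, gives $W_{,\beta}\equiv0$. By the defining decomposition $dW=W_{,\sigma}\cdot\sigma+W_{,\beta}\cdot\beta+W_{,\Phi}:\Phi$, it suffices to show that $\frac{d}{dt}W\left(e\left(t\right)\right)\mid_{t=0}=0$ for every $C^{1}$ curve $e$ in $E$ whose projection to $B$ is stationary at $t=0$ and which is covariantly constant at $t=0$, i.e. $\nabla_{\frac{d}{dt}\mid_{t=0}}^{\left(\pi\circ e\right)^{*}E}e=0$. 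Since $\sigma\oplus\beta\oplus\Phi$ is an isomorphism, every tangent vector in this sub-bundle is represented by such a curve.

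The key step is to factor $W$ through a scalar function of $C\left(F\right)$. Write $W=\alpha\,f\circ C$ with $f\left(A\right)=\text{tr}\left(A-I\right)-\log\det A$, where $f$ is defined on $TB\otimes T^{*}B$ fiberwise, and $\text{tr}$ and $\det$ of an endomorphism of $T_{b}B$ need no metric at all. Then I would show that $C\circ e$, viewed as a curve in the bundle $TB\otimes T^{*}B\to B$ along $\pi_{B}\circ e$, satisfies $\nabla_{\frac{d}{dt}}\left(C\circ e\right)\mid_{t=0}=0$. This follows from the Leibniz rule applied to
\begin{align*}
C\left(e\left(t\right)\right) & =g^{-1}\left(\pi_{B}\left(e\left(t\right)\right)\right)\cdot e\left(t\right)^{\left(1\,2\right)}\cdot h\left(\pi_{S}\left(e\left(t\right)\right)\right)\cdot e\left(t\right),
\end{align*}
using four facts. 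First, $\nabla e=0$ at $t=0$ by the $\Phi$-condition. Second, the transpose permutation commutes with the covariant derivative. Third, $\nabla g^{-1}=0$ and $\nabla h=0$ along any curve, by metric compatibility of the Levi-Civita connections; this is exactly the point flagged in the text preceding the statement. Fourth, the $TS$-contractions commute with the covariant derivative.

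Next, since $\text{tr}$ and $\det$ are parallel, meaning they are invariant under the parallel-transport isomorphisms of $TB\otimes T^{*}B$, the derivative of $f$ along a curve in $TB\otimes T^{*}B$ equals $df$ evaluated on its covariant derivative. Concretely, $\frac{d}{dt}f\left(A\left(t\right)\right)=\text{tr}\left(\left(I-A^{-1}\right)\nabla_{\frac{d}{dt}}A\right)$. To justify this, I would trivialize $TB\otimes T^{*}B$ along the curve by a parallel frame of $TB$. In such a frame the covariant derivative becomes an ordinary derivative, and trace and determinant keep their usual matrix form. Hence $\frac{d}{dt}W\left(e\left(t\right)\right)\mid_{t=0}=0$, which gives $W_{,\sigma}=0$. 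The $\beta$ case is identical, with the roles of the stationary base point and the moving base point swapped. There the step requiring care is that $\nabla g^{-1}=0$ also holds when the $B$-base point moves, which is again metric compatibility.

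I expect the main obstacle to be rigor about mixed base points. The curve $e$ moves in $S$ and fixes $B$, or vice versa, so the covariant derivative on $E$ is the tensor-product connection along the projected curve in $S\times B$. One must check that it is compatible with the contraction $h\left(\pi_{S}\right)\cdot F$, which pairs only the $TS$ legs. I would handle this by working in a parallel frame of $TS$ along $\pi_{S}\circ e$ together with a parallel frame of $TB$ along $\pi_{B}\circ e$. With respect to these frames $g$ and $h$ have constant (e.g. identity) matrices, and $e\left(t\right)$ has matrix $M\left(t\right)$ with $M^{\prime}\left(0\right)=0$. Then $C$ has matrix $M^{T}M$, so $\frac{d}{dt}\left(M^{T}M\right)=0$ at $t=0$, and $W$, computed from it via $\text{tr}$ and $\det$, has zero derivative.
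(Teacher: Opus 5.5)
Your proposal is correct and follows essentially the same route as the paper: factor $W=\alpha f\circ C$, show that $C$ has vanishing covariant derivative in the horizontal directions $\ker\Phi$ using the Leibniz rule and metric compatibility of $g$ and $h$, and conclude by the chain rule through $f$. The paper encodes the $\Phi$-condition through the tautological section $R(F)=F$ (with $R_{,\sigma}=R_{,\beta}=0$) rather than through curves, and it takes the chain-rule step $W_{,\sigma}=C^{*}df\cdot C_{,\sigma}$ for granted; your observation that $\text{tr}$ and $\det$ are parallel, or equivalently your parallel-frame computation in which $C$ has matrix $M^{T}M$, is exactly what justifies that step.
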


\section{Problem Statement}

The specific problem involves embedding a flat, hyperelastic body
$B$ with uniform density $\rho>0$ into a surface $S$ of revolution
$z=z\left(r\right)$ (in cylindrical coordinates) that has a conservative
central force field meant to represent a gravitational force, then
finding an equilibrium solution in which the forces arising within
the body responding to its deformation, which tend to push it toward
flatter regions of the surface, perfectly cancel the gravitational
forces on the body. A solution to this problem represents a kind of
curvature-induced \textquotedbl levitation\textquotedbl{} phenomenon
within this surface, and therefore could fairly be called a \textbf{curvature
levitator}.

By construction, the problem is radially symmetric. The central force
field arises from a potential $U\colon S\to\mathbb{R}$ depending
only on $r$, thus $U$ is a function of $r$; $U=U\left(r\right)$.
Rotations about the center $r=0$ are isometries of $S$, so in particular,
the Gaussian curvature $K\colon S\to\mathbb{R}$ of the surface is
a function of $r$; $K=K\left(r\right)$.

The existence of a solution to the problem depends in particular on
the following.
\begin{itemize}
\item The surface has a region, large in comparison with the body, in which
$\left|K\right|$ is strictly increasing along the central force field.
\item The body's stored energy (i.e. integral of stored energy density over
the body) is zero iff it is in an undeformed state.
\item The body's stored energy density function is coercive, meaning that
the stored energy increases with more deformation.
\item The body is sufficiently stiff.
\end{itemize}
Let $X:=\mathbb{R}^{2}$ with standard coordinates $x=\left(x^{1},x^{2}\right)$,
and let $B:=\left[-\frac{1}{2},\frac{1}{2}\right]\times\left[-\frac{1}{2},\frac{1}{2}\right]\subset X$,
and let $g\left(x\right):=I$ (Euclidean metric), representing the
flat reference configuration for the hyperelastic body. Let $S\subset\mathbb{R}^{3}$
be the surface of revolution defined by $z=z\left(r\right)$. Let
$Y:=\mathbb{R}^{2}$ with $y=\left(y^{1},y^{2}\right)$ denoting graph
coordinates on $S$. The labels $X$ and $Y$, referring to the model
vector spaces for $B$ and $S$ respectively, will help in semantic
distinction of the different coordinate systems. In the $Y$ coordinates,
$r=r\left(y\right):=\sqrt{\left(y^{1}\right)^{2}+\left(y^{2}\right)^{2}}$.
With $h$ and $K$ denoting the metric and Gaussian curvature on $S$,
respectively, it follows that
\begin{align*}
h\left(y\right) & =I+z^{\prime}\left(r\right)^{2}r^{-2}y\otimes y, & K\left(r\right) & =\frac{z^{\prime}\left(r\right)z^{\prime\prime}\left(r\right)}{r\left(z^{\prime}\left(r\right)^{2}+1\right)^{2}},
\end{align*}
where $y\otimes y$ is the outer product; $\left(y\otimes y\right)^{ij}=y^{i}y^{j}$.

A deformation tensor has the form $F\in Y\otimes X^{*}$ (linear map
from $X$ to $Y$), and the stored energy density has the form
\begin{align*}
W\colon B\times S\times Y\otimes X^{*} & \to\mathbb{R},\\
\left(x,y,F\right) & \mapsto\alpha\left(\text{tr}C\left(x,y,F\right)-2-\log\det C\left(x,y,F\right)\right),
\end{align*}
where the Cauchy strain tensor is given by
\begin{align*}
C\colon B\times S\times Y\otimes X^{*} & \to X\otimes X^{*},\\
\left(x,y,F\right) & \mapsto g^{-1}\left(x\right)\cdot F^{*}\cdot h\left(y\right)\cdot F.
\end{align*}
Define gravitational potential simply as the elevation function on
$S$: 
\begin{align*}
U\colon S & \to\mathbb{R},\\
y & \mapsto z\left(r\left(y\right)\right),
\end{align*}
noting that $U=U\left(r\right)$ by construction. Let the body have
uniform mass density $\rho>0$. The potential energy density is then
\begin{align*}
V\colon S & \to\mathbb{R},\\
y & \mapsto\rho U\left(y\right),
\end{align*}
noting that the uniformity of the body's mass density implies that
$V$, which would normally have type $S\times B\to\mathbb{R}$ as
described in \subsecref{A-Particular-Lagrangian}, is independent
of the body point and hence has type $S\to\mathbb{R}$. $V=V\left(r\right)$
by construction. Define the Lagrangian density as the sum of energy
densities.
\begin{align*}
L\colon B\times S\times Y\otimes X^{*} & \to\mathbb{R},\\
\left(x,y,F\right) & \mapsto W\left(x,y,F\right)+V\left(y\right).
\end{align*}
Note that $L$ is symmetric under rotations about $0\in Y$; if $R\in Y\otimes Y^{*}$
is such a rotation, then $L\left(x,R\cdot y,R\cdot F\right)=L\left(x,y,F\right)$.

A body configuration, in coordinates, has the form $\phi\colon B\to Y$.
The Lagrangian (action functional) is then
\begin{align*}
\mathcal{L}\colon\mathcal{D} & \to\mathbb{R},\\
\phi & \mapsto\int_{B}L\left(x,\phi\left(x\right),D\phi\left(x\right)\right)\,d\mu_{B}\left(x\right),
\end{align*}
where $\mathcal{D}$ is an appropriate subset of $H^{1}\left(B,Y\right)$
that excludes the maps $\phi$ for which the integrand is not defined
due to the $-\log\det C\left(x,\phi\left(x\right),D\phi\left(x\right)\right)$
term, and potentially the $V\left(\phi\left(x\right)\right)$ term
if it has a singularity. In particular, $\mathcal{D}$ includes $C^{1}$
immersions that avoid $U$-term singularities. In practice this domain
restriction will not be a problem, as there is an infinite energy
penalty that \textquotedbl protects\textquotedbl{} against the singular
configurations while searching for a minimizer.

The problem is to find a stable solution to the static problem, i.e.
a body configuration $\phi$ such that $D\mathcal{L}\left(\phi\right)=0$
and $D^{2}\mathcal{L}\left(\phi\right)$ is positive definite modulo
the expected symmetries. By construction, rotation about $0\in Y$
is a symmetry of the problem. Its infinitesimal generator at the body
configuration $\phi$ is $x\mapsto\left[\begin{array}{cc}
0 & -1\\
1 & 0
\end{array}\right]\phi\left(x\right)$. There may be other symmetries, depending on the specific form of
$z\left(r\right)$.

Here are several specific surfaces in which the problem will be solved.
\begin{center}
\begin{tabular}{|c|c|c|c|}
\hline 
Surface Name & $z\left(r\right)$ & $h\left(y\right)$ & $K\left(r\right)$\tabularnewline
\hline 
\hline 
Generic & $z\left(r\right)$ & $I+z^{\prime}\left(r\right)^{2}r^{-2}y\otimes y$ & $\frac{z^{\prime}\left(r\right)z^{\prime\prime}\left(r\right)}{r\left(z^{\prime}\left(r\right)^{2}+1\right)^{2}}$\tabularnewline
\hline 
Funnel & $-r^{-1}$ & $I+r^{-6}y\otimes y$ & $-\frac{2r^{2}}{\left(r^{4}+1\right)^{2}}$\tabularnewline
\hline 
Paraboloid & $\frac{1}{2}r^{2}$ & $I+y\otimes y$ & $\frac{1}{\left(r^{2}+1\right)^{2}}$\tabularnewline
\hline 
Flamm's Paraboloid & $2\sqrt{r_{s}\left(r-r_{s}\right)}$ & $I+\frac{r_{s}}{r-r_{s}}r^{-2}y\otimes y$ & $-\frac{r_{s}}{2r^{3}}$\tabularnewline
\hline 
\end{tabular}
\par\end{center}

In Flamm's paraboloid, $r_{s}$ denotes the Schwarzschild radius of
the gravitational source. In addition, the problem will be solved
in the following constant-curvature surface in order to show that
the levitation effect indeed requires a gradient of curvature.
\begin{center}
\begin{tabular}{|c|c|c|c|}
\hline 
Surface Name & $z\left(r\right)$ & $h\left(y\right)$ & $K\left(r\right)$\tabularnewline
\hline 
\hline 
Spherical cup of radius $R$ & $-\sqrt{R^{2}-r^{2}}$ & $I+\left(R^{2}-r^{2}\right)^{-1}y\otimes y$ & $\frac{1}{R^{2}}$\tabularnewline
\hline 
\end{tabular}
\par\end{center}

\section{Finite Element Method Constructions}

A finite element method will be used to find approximate solutions
to the problem numerically. This involves defining a mesh on $B$
and constructing a basis of functions each with local support limited
to a small number of mesh elements. Because the domain in question
is a rectangle, it's possible to construct the basis functions as
the tensor product of univariate basis functions defined over a subdivided
interval.

\subsection{Univariate 1-Jet Control Functions\protect\label{subsec:Univariate-1-Jet-Control-Functions}}

The univariate \textquotedbl reference elements\textquotedbl{} will
be constructed as follows. Consider the unit interval $I:=\left[0,1\right]$
with standard coordinate $t$. Let $s:=1-t$. The functions
\begin{align*}
v_{0}^{0}\left(t\right) & :=s^{3}+3s^{2}t, & v_{0}^{1}\left(t\right) & :=3st^{2}+t^{3},\\
v_{1}^{0}\left(t\right) & :=s^{2}t, & v_{1}^{1}\left(t\right) & :=-st^{2}
\end{align*}
form a basis for the 4-dimensional $\mathbb{R}$-vector space of cubic
polynomials on $I$ (this includes polynomials of degree less than
3), such that each function controls exactly one component of the
jet at one endpoint of $I$. These functions are called the cubic
Hermite spline basis functions. In particular,
\begin{align}
\left(\frac{d}{dt}\right)^{j}v_{\ell}^{k}\left(t\right)\mid_{t=i} & =\delta^{ik}\delta_{j\ell}\text{ for }i,j,k,\ell\in\left\{ 0,1\right\} \label{eq:jet-control-functions-equations}
\end{align}
A function $f$ written in this basis has the form $f\left(t\right)=f_{i}^{j}v_{j}^{i}\left(t\right)$
for $f_{i}^{j}\in\mathbb{R}$, where $f_{i}^{j}$ specifies the value
of the $j$th derivative of $f$ at the endpoint $t=i$. This gives
a very nice way to interpolate univariate functions on $I$ based
on boundary 1-jet data. This scheme can be extended to jets of order
$m$ by solving the system \eqref{jet-control-functions-equations}
on the space of polynomials of degree (up to) $2n+1$, where $i,k\in\left\{ 0,1\right\} $
and $j,\ell\in\left\{ 0,\dots,m\right\} $.

\subsection{Univariate Finite Element Function Space}

The univariate finite element function space will consist of piecewise-cubic
polynomials on an interval that are globally $C^{1}$ (i.e. cubic
on each piece and $C^{1}$ at piece boundaries). 

Consider the interval be $K:=\left[a_{0},a_{1}\right]$ where $a_{0}<a_{1}$.
Subdivide $K$ into $n\in\mathbb{N}$ segments of equal length. Use
$\left|\cdot\right|$ notation to denote the length of an interval.
Let $\mathcal{P}_{n}:=\left\{ 0,\dots,n\right\} $, and let $b_{p}:=a_{0}+p\left|K\right|n^{-1}$
for $p\in\mathcal{P}_{n}$, denoting the sequence of segment boundary
points. Let $\mathcal{Q}_{n}:=\left\{ 0,\dots,n-1\right\} $. The
$q$th segment, for $q\in\mathcal{Q}_{n}$ is $K_{q}:=\left[b_{q},b_{q+1}\right]$.
Let $K_{n}:=\left\{ K_{q}\mid q\in\mathcal{Q}_{n}\right\} $. For
each $q\in\mathcal{Q}_{n}$, let $\psi_{q}\colon K_{q}\to I,\,x\mapsto b_{q}+\left(x-b_{q}\right)\left|K_{q}\right|^{-1}$,
denoting the mapping from $K_{q}$ to the reference interval $I$.
Let $J_{1}\left(K_{n}\right)$ denote the function space of $K_{n}$-piecewise-cubic
polynomials that are globally $C^{1}$, noting that $\dim J_{1}\left(K_{n}\right)=2\left(n+1\right)$.
For $p\in\mathcal{P}_{n}$ and $\ell\in\left\{ 0,1\right\} $, define
\begin{align*}
w_{\ell}^{p}\colon K & \to\mathbb{R},\\
x & \mapsto\begin{cases}
\frac{1}{\ell!}\left|K_{p-1}\right|^{\ell}v_{\ell}^{1}\circ\psi_{p-1} & \text{if }p-1\in\mathcal{Q}_{n}\text{ and }x\in K_{p-1},\\
\frac{1}{\ell!}\left|K_{p}\right|^{\ell}v_{\ell}^{0}\circ\psi_{p} & \text{if }p\in\mathcal{Q}_{n}\text{ and }x\in K_{p},\\
0 & \text{otherwise.}
\end{cases}
\end{align*}
The factor of $\frac{1}{\ell!}\left|K_{\cdot}\right|^{\ell}$ is necessary
to map jet data in $K$ to jet data in the reference element $I$.
While the $\frac{1}{\ell!}$ factor seems redundant, it is necessary
in order to extend this finite element scheme to higher-order jets.

These functions span $J_{1}\left(K_{n}\right)$, and each one controls
exactly one component of a function's 1-jet at exactly one segment
boundary point. This gives a very nice way to interpolate univariate
functions on $K$ based on the sampling of the 1-jet at the segment
boundary points. $J_{1}\left(K_{n}\right)$ has a natural representation
as $\mathbb{R}^{\mathcal{P}_{n}\times\left\{ 0,1\right\} }$, where
the $\left(p,\ell\right)$th component gives the coefficient for $w_{\ell}^{p}$.
Thus, a function $f$ can be written in this basis as $f=f_{p}^{\ \ell}w_{\ell}^{p}$.
This representation is particularly suited for computers.

It should be noted that $J_{1}\left(K_{n}\right)$ is a vector subspace
of $J_{1}\left(K_{2n}\right)$, since each cubic defined on $K_{q}$
naturally defines a cubic on each of the two halves of $K_{q}$.

\subsection{Bivariate Finite Element Function Space}

The bivariate finite element function space will consist of piecewise-bicubic
polynomials on a rectangle that are globally $C^{1}$ (i.e. bicubic
on each piece and $C^{1}$ at piece boundaries).

Let $B:=K^{1}\times K^{2}$, where $K^{i}=\left[a_{0}^{i},a_{1}^{i}\right]$
is an interval with $a_{0}^{i}<a_{1}^{i}$, and let $n^{i}\in\mathbb{N}$
denote the number of subdivisions for $K^{i}$. Let 
\begin{align*}
B_{n^{1}n^{2}} & :=K_{n^{1}}^{1}\times K_{n^{2}}^{2}\equiv\left\{ K_{q^{1}}^{1}\times K_{q^{2}}^{2}\subset B\mid q^{1}\in\mathcal{Q}_{n^{1}}\text{ and }q^{2}\in\mathcal{Q}_{n^{2}}\right\} ,
\end{align*}
and let $J_{1}\left(B_{n^{1}n^{2}}\right)$ denote the $B_{n^{1}n^{2}}$-piecewise
bicubic polynomials on $B$ that are globally $C^{1}$. Then
\begin{align*}
J_{1}\left(B_{n^{1}n^{2}}\right) & =J_{1}\left(K_{n^{1}}^{1}\right)\otimes J_{1}\left(K_{n^{2}}^{2}\right),
\end{align*}
noting that $\dim J_{1}\left(B_{n^{1}n^{2}}\right)=4\left(n^{1}+1\right)\left(n^{2}+1\right)$.
Let $\left(w_{\ell}^{p}\right)$ and $\left(z_{\ell}^{p}\right)$
denote the 1-jet control bases in $J_{1}\left(K_{n^{1}}^{1}\right)$
and $J_{1}\left(K_{n^{2}}^{2}\right)$ respectively. $J_{1}\left(B_{n^{1}n^{2}}\right)$
has a natural representation as $\mathbb{R}^{\mathcal{P}_{n^{1}}\times\mathcal{P}_{n^{2}}\times\left\{ 0,1\right\} \times\left\{ 0,1\right\} }$.
A function $f\in J_{1}\left(B_{n^{1}n^{2}}\right)$ is written in
this basis as
\begin{align*}
f\left(x^{1},x^{2}\right) & =f_{ij}^{\ \ k\ell}w_{k}^{i}\left(x^{1}\right)z_{\ell}^{j}\left(x^{2}\right),
\end{align*}
where $f_{ij}^{\ \ k\ell}$ gives the value of $\partial_{x^{1}}^{k}\partial_{x^{2}}^{\ell}f$
at the $\left(i,j\right)$th vertex in the rectangular mesh $B_{n^{1}n^{2}}$.
To put this more concretely, with $\left(v^{1},v^{2}\right)$ denoting
the $\left(i,j\right)$th vertex, the jet data is
\begin{align*}
f_{ij}^{\ \ 00} & =f\left(v^{1},v^{2}\right), & f_{ij}^{\ \ 10} & =\frac{\partial f}{\partial x^{1}}\left(v^{1},v^{2}\right), & f_{ij}^{\ \ 01} & =\frac{\partial f}{\partial x^{2}}\left(v^{1},v^{2}\right), & f_{ij}^{\ \ 11} & =\frac{\partial^{2}f}{\partial x^{1}\partial x^{2}}\left(v^{1},v^{2}\right).
\end{align*}
$J_{1}\left(\Omega\right)$ is a finite-dimensional subspace of $C^{1}\left(\Omega,\mathbb{R}\right)$.
So as to have a more self-encapsulated basis for $J_{1}\left(B_{n^{1}n^{2}}\right)$,
let
\begin{align*}
b_{\ \ k\ell}^{ij} & :=w_{k}^{i}\otimes z_{\ell}^{j},
\end{align*}
so that a function $f\in J_{1}\left(B_{n^{1}n^{2}}\right)$ is written
as $f=f_{ij}^{\ \ k\ell}b_{\ \ k\ell}^{ij}$. For ease of notation,
the basis symbols $b_{\ \ k\ell}^{ij}$ will not have the $n^{1}n^{2}$
parameter decorations as $B_{n^{1}n^{2}}$ does, and those parameter
values will be determined from context.

Let $F_{N}:=J_{1}\left(B_{NN}\right)$, noting that the subdivisions
along each axis are made at the same rate. This construction uses
what is known as the Bogner-Fox-Schmit Rectangle \citet{Valdman2020},
and is known to be a $H^{2}$-conforming element, meaning that an
element of $H^{2}\left(B,\mathbb{R}\right)$ can be arbitrarily-well
approximated by an element of $F_{N}$ for some $N\in\mathbb{N}$.

The space $F_{N}$ only provides $\mathbb{R}$-valued functions. However,
this can easily be extended to vector- and manifold-valued functions
by choosing a coordinate system, say $Y$, and declaring each coordinate
function to be in $F_{N}$. This coordinatized function space has
a particularly nice representation as $V_{N}:=Y\otimes F_{N}$, having
dimension $4\left(N+1\right)^{2}\left(\dim Y\right)$. It follows
that $V_{N}$ is a $H^{2}\left(B,Y\right)$-conforming element, and
is sufficient for use in this problem.

\section{Numerical Implementation}

A significant difficulty in producing numerical solutions to mathematical
problems is found in implementing the various formulas that define
the problem. There are several stages that pose challenges here, where
failure in one stage precludes the success of the next stage.
\begin{enumerate}
\item Derive the correct mathematical formulas.
\item Implement concrete numerical computation of the formulas correctly
in code, accounting for numerical ill-conditioning as appropriate.
\item Write test code to verify that each implementation satisfies the expected
properties and constraints. Test code must also account for numerical
ill-conditioning.
\end{enumerate}
Thus, in order to improve (or even make tractable) the process of
producing correct code, any tools that reduce the amount of code that
needs to be written and tested are extremely valuable.

Many problems involve computing derivatives of various kinds. However,
testing the correctness of the numerical computation of a derivative
is difficult. The usual finite-difference approximation of the derivative,
\begin{align*}
f^{\prime}\left(x\right) & =\frac{f\left(x+h\right)-f\left(x\right)}{h}+O\left(h\right),
\end{align*}
has extremely poor numerical conditioning -- subtracting two floating
point numbers that are similar in magnitude introduces a very large
round-off error. Thus, even writing tests to verify correctness of
derivative computations is difficult.

\subsection{Automatic Differentiation}

The methods described in this section, in the author's opinion, are
of critical importance, and are an indispensable tool for any person
working in numerical computation.

\subsubsection{The Dual Numbers and Hyper-Dual Numbers\protect\label{subsec:The-Dual-and-Hyper-Dual-Numbers}}

The method of automatic differentiation relies on the notion of \textbf{dual
numbers} \citet{rehner2021}. This algebraic structure allows one
to perform exact computations of derivatives. The dual numbers are
a commutative algebra $\mathbb{D}:=\left\langle 1,\epsilon\right\rangle $
(i.e. the free $\mathbb{R}$-vector space with basis elements $1$
and $\epsilon$), where $\epsilon$ is the unit for infinitesimals,
i.e. $\epsilon^{2}:=0$. $\mathbb{D}$ can rightly be thought of as
the space of 1-jets of real-valued functions, and could fairly (and
more accurately) be called the \textbf{1-jet numbers}. Simply by using
this kind of number in place of a floating point value in a numerical
computation results in exact computation of the function and its derivative
-- no need to even select an $h$ value. A function $f\in C^{1}\left(\mathbb{R},\mathbb{R}\right)$
is uniquely extended to $\mathbb{D}\to\mathbb{D}$ by 
\begin{align*}
f\left(x+y\epsilon\right) & =f\left(x\right)+f^{\prime}\left(x\right)y\epsilon.
\end{align*}
The value $x+\epsilon\in\mathbb{D}$ can be thought of as the 1-jet
of the identity function on $\mathbb{R}$ evaluated at $x$.

The \textbf{hyper-dual numbers}, introduced in \citet{fike_2013_hyperdual},
make multivariate and higher-order derivatives possible. The hyper-dual
numbers $\mathbb{H}$ extend the idea to have two infinitesimals $\epsilon_{1},\epsilon_{2}$,
each squaring to zero, and $\epsilon_{1}\epsilon_{2}=\epsilon_{2}\epsilon_{1}\neq0$.
A hyper-dual number has the form $x+y^{1}\epsilon_{1}+y^{2}\epsilon_{2}+z\epsilon_{1}\epsilon_{2}$
for $x,y^{1},y^{2},z\in\mathbb{R}$. Then a function $f\in C^{2}\left(\mathbb{R},\mathbb{R}\right)$
is uniquely extended to $\mathbb{H}\to\mathbb{H}$ by
\begin{align*}
f\left(x+y^{1}\epsilon_{1}+y^{2}\epsilon_{2}+z\epsilon_{1}\epsilon_{2}\right) & =f\left(x\right)+f^{\prime}\left(x\right)y^{1}\epsilon_{1}+f^{\prime}\left(x\right)y^{2}\epsilon_{2}+\left(f^{\prime\prime}\left(x\right)y^{1}y^{2}+f^{\prime}\left(x\right)z\right)\epsilon_{1}\epsilon_{2},
\end{align*}
and it follows that
\begin{align*}
f\left(x+\epsilon_{1}+\epsilon_{2}\right) & =f\left(x\right)+f^{\prime}\left(x\right)\epsilon_{1}+f^{\prime}\left(x\right)\epsilon_{2}+f^{\prime\prime}\left(x\right)\epsilon_{1}\epsilon_{2},
\end{align*}
meaning that the $2$-jet of $f$ at $x$ is computed in a single
call to $f$. The peculiar form of the second order term in the extension
is due to the need for extensions to satisfy the chain rule. The hyper-dual
numbers can also be used for multivariate automatic differentiation.
If $U$ and $V$ are finite-dimensional vector spaces, then $F\in C^{2}\left(U,V\right)$
is uniquely extended to $\mathbb{H}\otimes U\to\mathbb{H}\otimes V$
by
\begin{align*}
F\left(x+y^{1}\epsilon_{1}+y^{2}\epsilon_{2}+z\epsilon_{1}\epsilon_{2}\right) & =F\left(x\right)+DF\left(x\right)\cdot y^{1}\epsilon_{1}+DF\left(x\right)\cdot y^{2}\epsilon_{2}+\left(D^{2}F\left(x\right):\left(y^{1}\otimes y^{2}\right)+DF\left(x\right)\cdot z\right)\epsilon_{1}\epsilon_{2},
\end{align*}
where $x,y^{1},y^{2},z\in U$. Then, 
\begin{align*}
F\left(x+y^{1}\epsilon_{1}+y^{2}\epsilon_{2}\right) & =F\left(x\right)+DF\left(x\right)\cdot y^{1}\epsilon_{1}+DF\left(x\right)\cdot y^{2}\epsilon_{2}+D^{2}F\left(x\right):\left(y^{1}\otimes y^{2}\right)\epsilon_{1}\epsilon_{2},
\end{align*}
showing that $F\left(x\right)$, $F_{,y^{1}}\left(x\right)$, $F_{,y^{2}}\left(x\right)$,
and $F_{,y^{1}y^{2}}\left(x\right)$ are computed in a single call
to $F$.

Note that one call to $F$ \textbf{does} \textbf{not} give the full
2-jet of $F$ if $\dim U>1$. Evaluating $DF\left(x\right)$ requires
$\dim X$ evaluations of $F$, and evaluating $D^{2}F\left(x\right)$
requires ${\dim X \choose 2}$ evaluations of $F$. Thus there is
redundant computation of $F\left(x\right)$ when computing $DF\left(x\right)$
and there is redundant computation of $F\left(x\right)$ and $DF\left(x\right)$
when computing $D^{2}F\left(x\right)$. However, this method provides
a qualitative utility that can't be understated -- one must only
implement the base function $F$, and the implementations of $DF$
and $D^{2}F$ are provided automatically. Given how complicated derivatives
of expressions of tensor calculus can get, this advantage should not
be ignored.

\subsection{Numerical Optimization}

The static hyperelastic problem addressed in this article is a local
minimization problem on $H^{1}\left(B,Y\right)$, which is an infinite-dimensional
space. With the finite-dimensional function spaces $V_{N}$ constructed,
let $\mathcal{L}_{N}:=\mathcal{L}\mid_{V_{N}}$ denote the restricted
Lagrangian. The problem becomes a sequence of local minimization problems
for each $\mathcal{L}_{N}$. A solution $\phi_{N}\in V_{N}$ is defined
by $D\mathcal{L}_{N}\left(\phi_{N}\right)=0$ and $D^{2}\mathcal{L}_{N}\left(\phi_{N}\right)$
being positive-definite modulo expected symmetries, both conditions
subject to numerical tolerance. Here, the appropriate second derivative
is the ordinary Hessian $D^{2}\mathcal{L}_{N}$, since the domain
of $\mathcal{L}_{N}$ is a finite-dimensional vector space. $\phi_{N}$
can then be mapped into $V_{N+1}$ to form the initial condition for
the local minimization problem for $\mathcal{L}_{N+1}$. At some point,
this iterative process will exhaust the numerical well-conditioning,
the computer's capabilities, or the patience of the mathematician,
and the process will be terminated. Nonlinear numerical optimization
takes some finesse, as there is no one-size-fits-all optimization
algorithm that works for every problem. The code that produced the
solutions presented in this article can be found at \url{https://github.com/vdods/jello}.

\subsection{Computation of $\mathcal{L}$}

Recall that $X=\mathbb{R}^{2}$, $Y=\mathbb{R}^{2}$, $\left(x^{1},x^{2}\right)$
are the standard coordinates on $X$, and $\left(y^{1},y^{2}\right)$
are the standard coordinates on $Y$. In particular, $x^{1},x^{2}\in X^{*}$
and $y^{1},y^{2}\in Y^{*}$ are the standard bases for $X^{*}\cong\mathbb{R}^{2}$
and $Y^{*}\cong\mathbb{R}^{2}$ respectively. Let $y_{1},y_{2}\in Y$
denote the basis dual to $y^{1},y^{2}\in Y^{*}$, noting that this
is the standard basis for $Y$.

With $H^{1}\left(B,S\right)$ coordinatized as $H^{1}\left(B,Y\right)$,
$\mathcal{L}$ takes the form
\begin{align*}
\mathcal{L}\left(\phi\right) & =\int_{B}L\left(x,\phi\left(x\right),D\phi\left(x\right)\right)\,d\mu_{B}\left(x\right).
\end{align*}
Thus in order to compute $\mathcal{L}\left(\phi\right)$, it's necessary
to compute the deformation tensor field $D\phi\colon B\to Y\otimes X^{*}$.
Recall that $b_{\ \ k\ell}^{ij}\colon B\to\mathbb{R}$ has the explicit
form
\begin{align*}
b_{\ \ k\ell}^{ij}\left(x^{1},x^{2}\right) & =w_{k}^{i}\left(x^{1}\right)z_{\ell}^{j}\left(x^{2}\right).
\end{align*}
Because $\phi$ will be represented as
\begin{align*}
\phi & =\phi_{\ ij}^{p\ \ k\ell}y_{p}\otimes b_{\ \ k\ell}^{ij}\in V_{N}\equiv Y\otimes J_{1}\left(B_{NN}\right)
\end{align*}
where $p$ indexes $Y$, it follows that
\begin{align*}
D\phi=y_{p}\otimes x^{q}\otimes\partial_{q}\phi^{p} & =\phi_{\ ij}^{p\ \ k\ell}y_{p}\otimes x^{q}\otimes\partial_{x^{q}}b_{\ \ k\ell}^{ij}\in Y\otimes X^{*}\otimes J_{0}\left(B_{NN}\right),
\end{align*}
where $J_{0}\left(B_{NN}\right)$ denotes the space of piecewise bicubic
polynomials that are globally $C^{0}$. Expressing the latter factor
concretely,
\begin{align*}
\partial_{x^{1}}b_{\ \ k\ell}^{ij}\left(x^{1},x^{2}\right) & =\left(w_{k}^{i}\right)^{\prime}\left(x^{1}\right)z_{\ell}^{j}\left(x^{2}\right), & \partial_{x^{2}}b_{\ \ k\ell}^{ij}\left(x^{1},x^{2}\right) & =w_{k}^{i}\left(x^{1}\right)\left(z_{\ell}^{j}\right)^{\prime}\left(x^{2}\right).
\end{align*}
Because $w_{k}^{i}$ and $z_{\ell}^{j}$ have known expressions as
piecewise polynomials, their derivatives do too. Thus $D\phi$ will
be computed directly, instead of through automatic differentiation.
Thus now the Lagrangian has the form
\begin{align*}
\mathcal{L}\left(\phi\right) & =\int_{B}L\left(x,\phi_{\ ij}^{p\ \ k\ell}b_{\ \ k\ell}^{ij}\left(x\right),\phi_{\ ij}^{p\ \ k\ell}\partial_{x^{q}}b_{\ \ k\ell}^{ij}\left(x\right)\right)\,d\mu_{B}\left(x\right).
\end{align*}
This is not quite computable, because even though $\phi\in V_{N}$,
the integrand can be highly nonlinear and therefore might not necessarily
reside in any computable function space -- remember, computers need
concrete, finite-dimensional function subspaces.

\subsubsection{Integration by Quadrature\protect\label{subsec:Integration-by-Quadrature}}

Integration by quadrature -- which would be better named \textbf{approximate
integration} -- is the standard method for computing an approximation
of such an integral. Essentially it is choosing a discrete measure
on a finite subset of the domain $\Omega$ of integration -- a set
of \textquotedbl sampling\textquotedbl{} points $x_{1},\dots,x_{n}\in\Omega$
and corresponding weights $w_{1},\dots,w_{n}\in\mathbb{R}_{+}$ --
giving a finite-dimensional approximation of the actual measure on
the domain.
\begin{align*}
\int_{\Omega}f\left(x\right)dx & \approx\sum_{i=1}^{n}f\left(x_{i}\right)w_{i}.
\end{align*}
The weights are chosen such that using the quadrature rule to \textquotedbl integrate
1\textquotedbl{} gives exactly the volume of $\Omega$. Roughly speaking,
the more points, the more accurate the approximation, though the choice
of point location and weight is nontrivial.

The domain $B$ is a rectangle, so a quadrature rule that is the product
of univariate quadrature rules can be used. The Gauss-Legendre quadrature
with $n$ points is constructed such that it integrates polynomials
of degree up to $2n-1$ exactly. The specific choice of $n$ depends
on the nature of the functions being integrated. In this problem,
\begin{align*}
L\left(x,\phi\left(x\right),D\phi\left(x\right)\right) & =\alpha\left(\text{tr}C\left(x,\phi\left(x\right),D\phi\left(x\right)\right)-2-\log\det C\left(x,\phi\left(x\right),D\phi\left(x\right)\right)\right)-U\left(\phi\left(x\right)\right).
\end{align*}
Because
\begin{align*}
\text{tr}C\left(x,\phi\left(x\right),D\phi\left(x\right)\right) & =\text{tr}\left(g^{-1}\left(x\right)\cdot D\phi\left(x\right)^{*}\cdot h\left(\phi\left(x\right)\right)\cdot D\phi\left(x\right)\right),
\end{align*}
this term is roughly quadratic in $D\phi$, and because $\phi$ is
bicubic, $D\phi$ is bicubic (being the sum of quadratic$\times$cubic
+ cubic$\times$quadratic), and thus $\alpha\text{tr}C\left(x,\phi\left(x\right),D\phi\left(x\right)\right)$
is roughly bisextic. The other terms in the Lagrangian density are
nonlinear, not suggesting an obvious heuristic. Thus a $5$-point
univariate quadrature rule, which integrates polynomials of degree
up to $9$, will be used, resulting in a $5\times5$ Gauss-Legendre
quadrature rule on each mesh rectangle.

For reference, on the domain $\left[-1,1\right]$, the first $5$
Gauss-Legendre quadrature rules are as follows, where $n$ is the
number of points, $x_{1},\dots,x_{n}$ are the integration points,
and $w_{1},\dots,w_{n}$ are the integration weights.
\begin{center}
\begin{tabular}{|c|c|c|}
\hline 
$n$ & $x_{i}$ & $w_{i}$\tabularnewline
\hline 
\hline 
1 & 0 & 2\tabularnewline
\hline 
2 & $-\frac{1}{\sqrt{3}},\frac{1}{\sqrt{3}}$ & 1\tabularnewline
\hline 
3 & $-\sqrt{\frac{3}{5}},0,\sqrt{\frac{3}{5}}$ & $\frac{5}{9},\frac{8}{9},\frac{5}{9}$\tabularnewline
\hline 
4 & $-\sqrt{\frac{3}{7}+\frac{2}{7}\sqrt{\frac{6}{5}}},-\sqrt{\frac{3}{7}-\frac{2}{7}\sqrt{\frac{6}{5}}},\sqrt{\frac{3}{7}-\frac{2}{7}\sqrt{\frac{6}{5}}},\sqrt{\frac{3}{7}+\frac{2}{7}\sqrt{\frac{6}{5}}}$ & $\frac{18-\sqrt{30}}{36},\frac{18+\sqrt{30}}{36},\frac{18+\sqrt{30}}{36},\frac{18-\sqrt{30}}{36}$\tabularnewline
\hline 
5 & $-\frac{1}{3}\sqrt{5+2\sqrt{\frac{10}{7}}},-\frac{1}{3}\sqrt{5-2\sqrt{\frac{10}{7}}},0,\frac{1}{3}\sqrt{5-2\sqrt{\frac{10}{7}}},\frac{1}{3}\sqrt{5+2\sqrt{\frac{10}{7}}}$ & $\frac{322-13\sqrt{70}}{900},\frac{322+13\sqrt{70}}{900},\frac{128}{225},\frac{322+13\sqrt{70}}{900},\frac{322-13\sqrt{70}}{900}$\tabularnewline
\hline 
\end{tabular}
\par\end{center}

In order to use such a quadrature rule on an arbitrary interval $\left[a,b\right]$,
the weights must be multiplied by $\frac{b-a}{2}$ and the points
transformed via the affine function $\left[-1,1\right]\to\left[a,b\right],\,x\mapsto\frac{1-x}{2}a+\frac{1+x}{2}b$.

To define a quadrature rule on a rectangle $R:=\left[a^{1},b^{1}\right]\times\left[a^{2},b^{2}\right]$
(i.e. the product of intervals), simply construct the product of quadrature
rules of each factor. The quadrature points for $R$ are $x_{uv}:=\left(x_{u}^{1},x_{v}^{2}\right)\in R$
where $x_{0}^{i},\dots,x_{n^{i}}^{i}\in\left[a^{i},b^{i}\right]$
denote the quadrature points for $\left[a^{i},b^{i}\right]$. The
weights for $R$ are $w_{uv}:=w_{u}^{1}w_{v}^{2}\in\mathbb{R}_{+}$,
where $w_{0}^{i},\dots,w_{n^{i}}^{i}\in\mathbb{R}_{+}$ denote the
weights for $\left[a^{i},b^{i}\right]$. Note that
\begin{align*}
\sum_{u=0}^{n^{1}}\sum_{v=0}^{n^{2}}w_{uv} & =\sum_{u=0}^{n^{1}}w_{u}^{1}\sum_{v=0}^{n^{2}}w_{v}^{2}=\left(b^{1}-a^{1}\right)\left(b^{2}-a^{2}\right)=\text{Area}\left(R\right)=\int_{R}1\,dx,
\end{align*}
as expected.

\subsubsection{Computable Form of $\mathcal{L}$}

In the problem at hand, $B=K^{1}\times K^{2}$, and $\dim V_{N}=\dim\left(Y\otimes J_{1}\left(K_{N}^{1}\right)\otimes J_{1}\left(K_{N}^{2}\right)\right)=2\cdot2\left(N+1\right)\cdot2\left(N+1\right)$.
Using the quadrature rule described in \subsecref{Integration-by-Quadrature},
let the integration points and weights be denoted $x_{uv}\in B$ and
$w_{uv}\in\mathbb{R}_{+}$, respectively.

Finally, the computable approximation $\mathcal{L}_{N}^{Q}$ of $\mathcal{L}$
can be written down.
\begin{align*}
\mathcal{L}\left(\phi\right) & \approx\mathcal{L}_{N}^{Q}\left(\phi\right):=\sum_{u,v=1}^{Q}L\left(x_{uv},\phi_{\ ij}^{p\ \ k\ell}\left(x_{uv}\right),\partial_{x^{q}}\phi_{\ ij}^{p\ \ k\ell}\left(x_{uv}\right)\right)w_{uv}.
\end{align*}

All code that computes needed functions and quantities should be written
such that it is \textquotedbl agnostic\textquotedbl{} as to the specific
number format being used, so long as that number format supports all
the expected operations -- algebraic operations, trigonometric functions,
exponentials, logarithms, and so forth. If done correctly, then the
hyper-dual numbers discussed in \subsecref{The-Dual-and-Hyper-Dual-Numbers}
can be used to perform automatic differentiation of $\mathcal{L}_{N}^{Q}$
to compute $\mathcal{L}_{N}^{Q}\left(\phi\right)$, $D\mathcal{L}_{N}^{Q}\left(\phi\right)$,
and $D^{2}\mathcal{L}_{N}^{Q}\left(\phi\right)$, which are used in
optimization methods to find solutions to the problem. With a careful
selection of an initial body configuration, the search for numerical
solutions can commence.

\section{Numerical Results}

Numerical optimization methods were used with $B=\left[-\frac{1}{2},\frac{1}{2}\right]\times\left[-\frac{1}{2},\frac{1}{2}\right]$
and an initial body configuration centered at $\left(y^{1},0\right)\in Y$
for varying $y^{1}\in\mathbb{R}$ values depending on the surface,
with the body rotated by an angle value chosen per surface. Each produced
a stable numerical solution to the static problem. All code and data
is available at \url{https://github.com/vdods/jello}. Updates and
other relevant additional work will be posted there.

\subsection{Funnel Surface $z=-r^{-1}$}

This solution shows a curvature levitator in a downward-pointing funnel
surface, which has vertical asymptote at $r=0$ and everywhere-negative
Gaussian curvature $K\left(r\right)=-\frac{2r}{\left(r^{4}+1\right)^{2}}$.
$K\left(r\right)$ has a global minimum at $r=7^{-1/4}\approx0.614788$,
and increases monotonically both as $r\to0$ and as $r\to\infty$.
The gravitational potential is $z$. The body is positioned fully
outside of the critical radius $r=7^{-1/4}$ so that its response
to deformation tends to push toward increasing $r$. If, on the other
hand, the body were to be entirely within the critical radius, its
deformation response forces would actually pull it in toward the center
-- thus the levitation phenomenon is not possible within the region
$r<7^{-1/4}$.

Here are plots showing the energy densities and vector fields that
describe important aspects of the solution. Modulo the expected symmetries,
$D^{2}\mathcal{L}\left(\phi\right)$ had only positive eigenvalues,
showing that the solution is stable.
\begin{center}
\begin{figure}[H]
\begin{centering}
\includegraphics[width=3.5in]{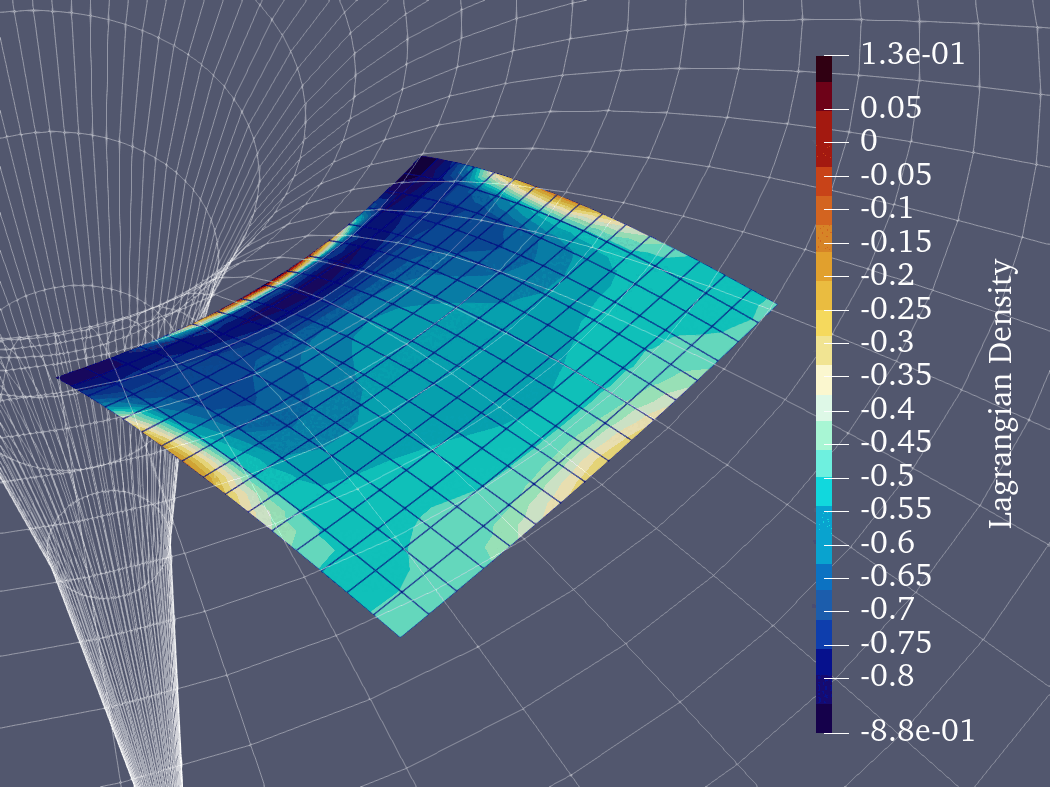}
\includegraphics[width=3.5in]{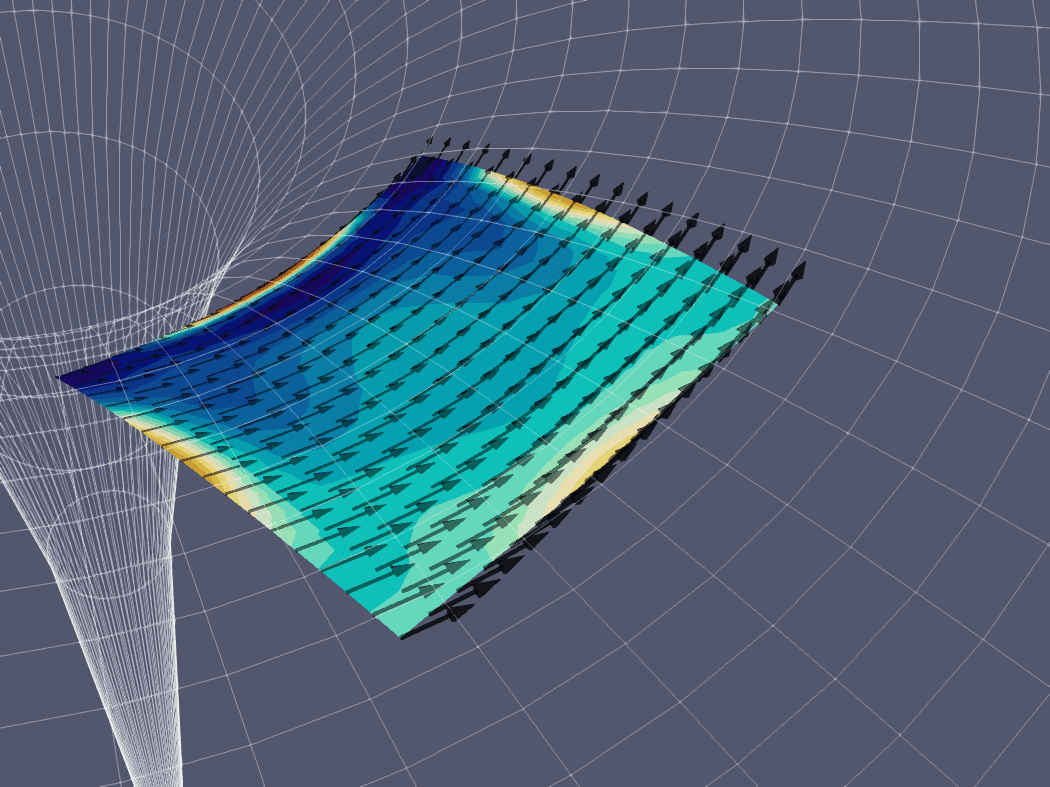}
\par\end{centering}
\caption{Left: The Lagrangian density of the body. Right: The vector field
corresponding to the expected, single zero eigenvalue of $D^{2}\mathcal{L}\left(\phi\right)$,
which is an infinitesimal rotation about the central axis of the surface
and is the generator of the symmetry group $\mathbb{S}^{1}$ of the
problem.}
\end{figure}
\par\end{center}

\begin{center}
\begin{figure}[H]
\begin{centering}
\includegraphics[width=3.5in]{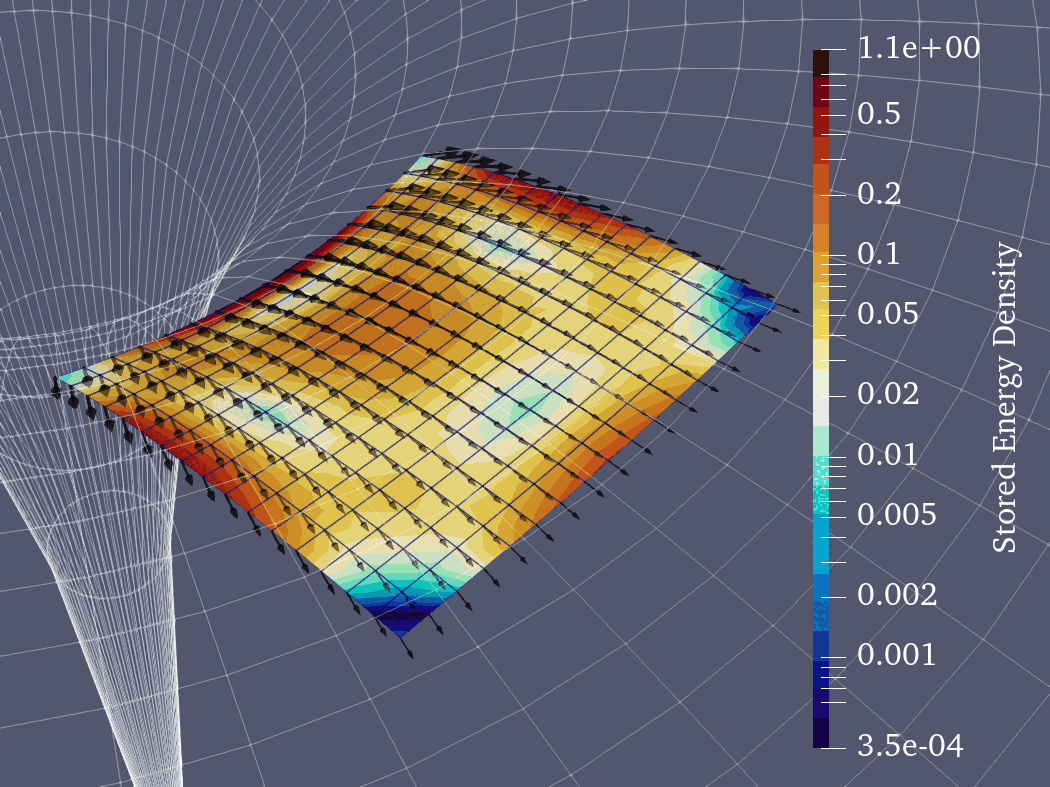}
\includegraphics[width=3.5in]{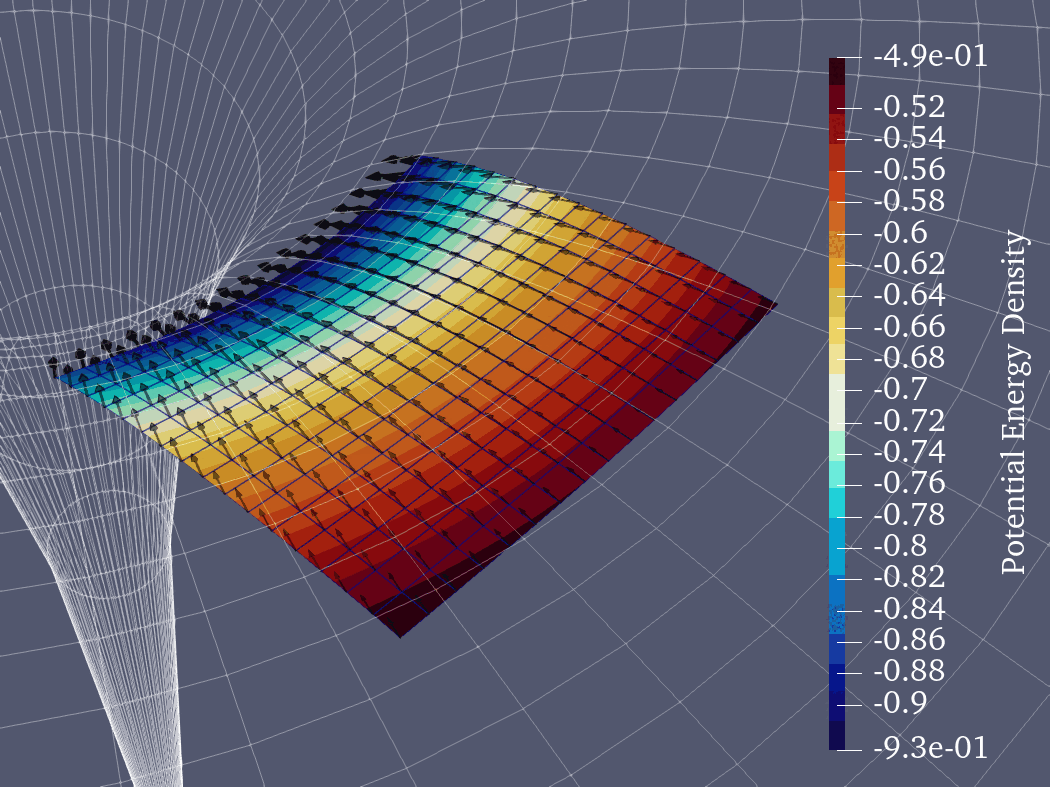}
\par\end{centering}
\caption{\protect\label{fig:funnel-stored-energy-density-potential-energy-density}Left:
The stored energy density of the body, along with the force field
arising from it. Right: The potential energy density of the body,
along with the force field arising from it. These two fields perfectly
cancel each other to within numerical tolerance.}
\end{figure}
\par\end{center}

\begin{figure}[H]
\begin{centering}
\includegraphics[width=3.5in]{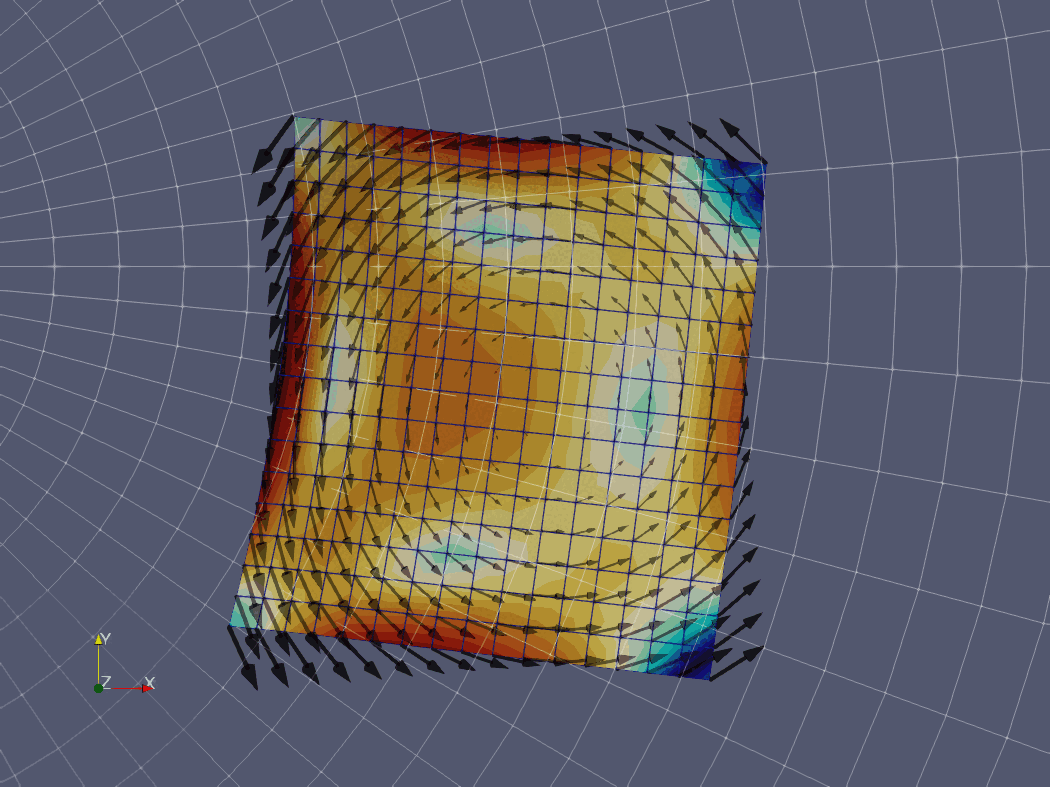}
\includegraphics[width=3.5in]{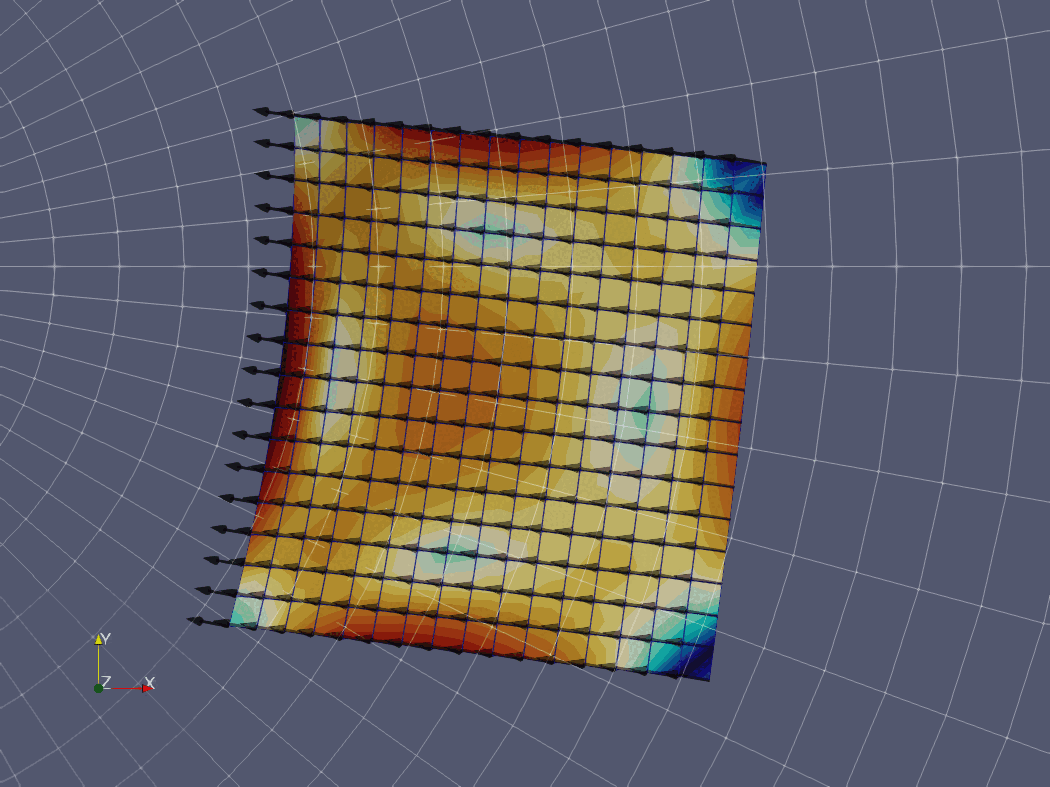}
\par\end{centering}
\begin{centering}
\includegraphics[width=3.5in]{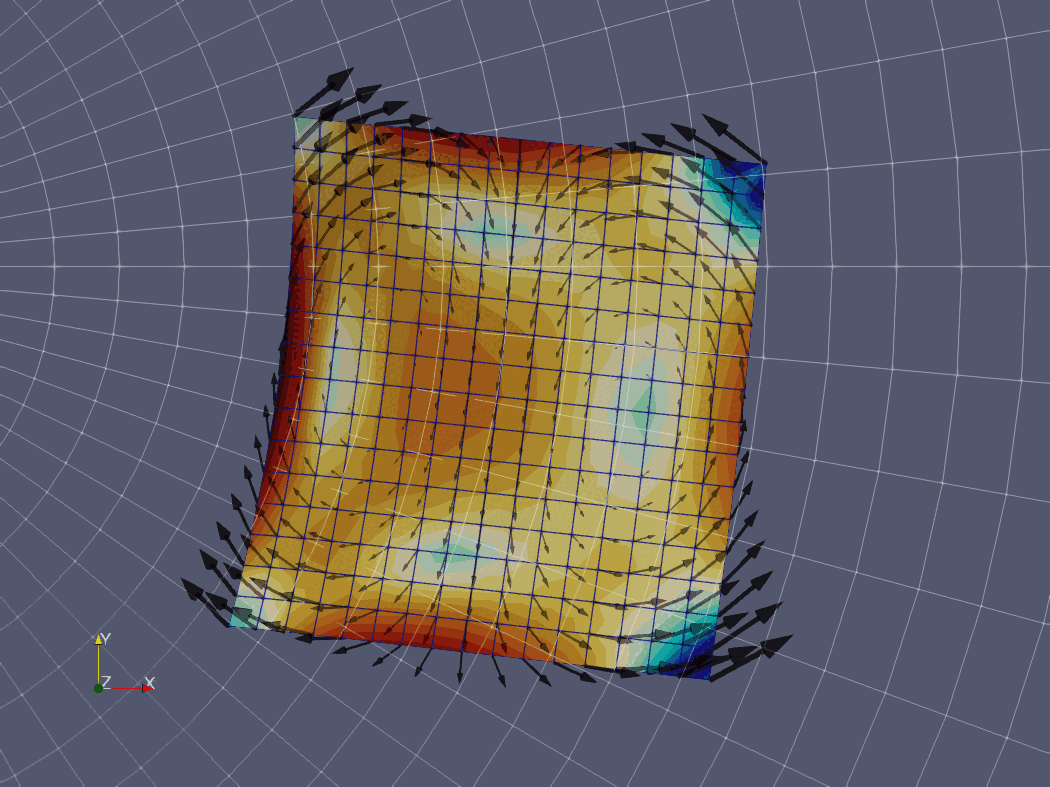}
\includegraphics[width=3.5in]{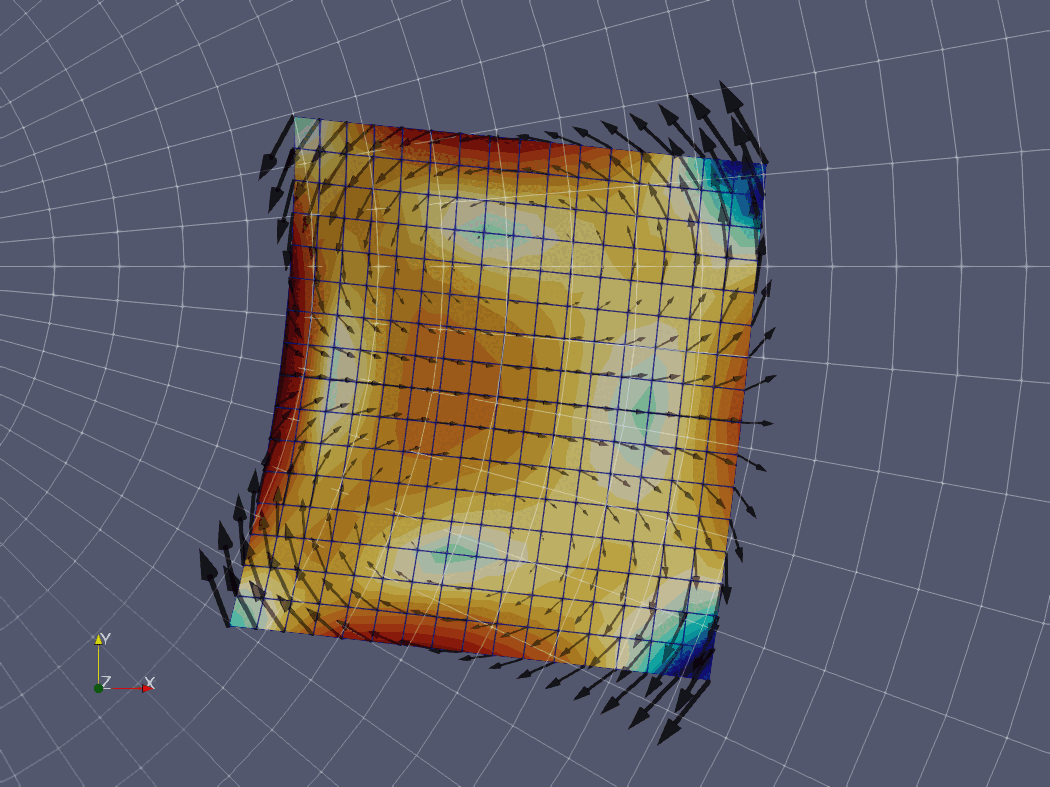}
\par\end{centering}
\caption{Top-down views of the vector fields corresponding to the smallest
four positive $D^{2}\mathcal{L}\left(\phi\right)$ eigenvalues.}
\end{figure}

\subsection{Paraboloid Surface $z=\frac{1}{2}r^{2}$}

This solution shows a curvature levitator in a parabolic cup surface,
which has everywhere-positive Gaussian curvature $K\left(r\right)=\frac{1}{\left(r^{2}+1\right)^{2}}$.
$K\left(r\right)$ has a global maximum at $r=0$ and decreases monotonically
as $r\to\infty$. The gravitational potential is $z$. Because this
surface has everywhere-positive Gaussian curvature, the levitation
phenomenon can occur anywhere in the surface, though it's possible
that a body that is \textquotedbl large\textquotedbl{} could exhibit
buckling.

Here are plots showing the energy densities and vector fields that
describe important aspects of the solution. Modulo the expected symmetries,
$D^{2}\mathcal{L}\left(\phi\right)$ had only positive eigenvalues,
showing that the solution is stable.
\begin{center}
\begin{figure}[H]
\begin{centering}
\includegraphics[width=3.5in]{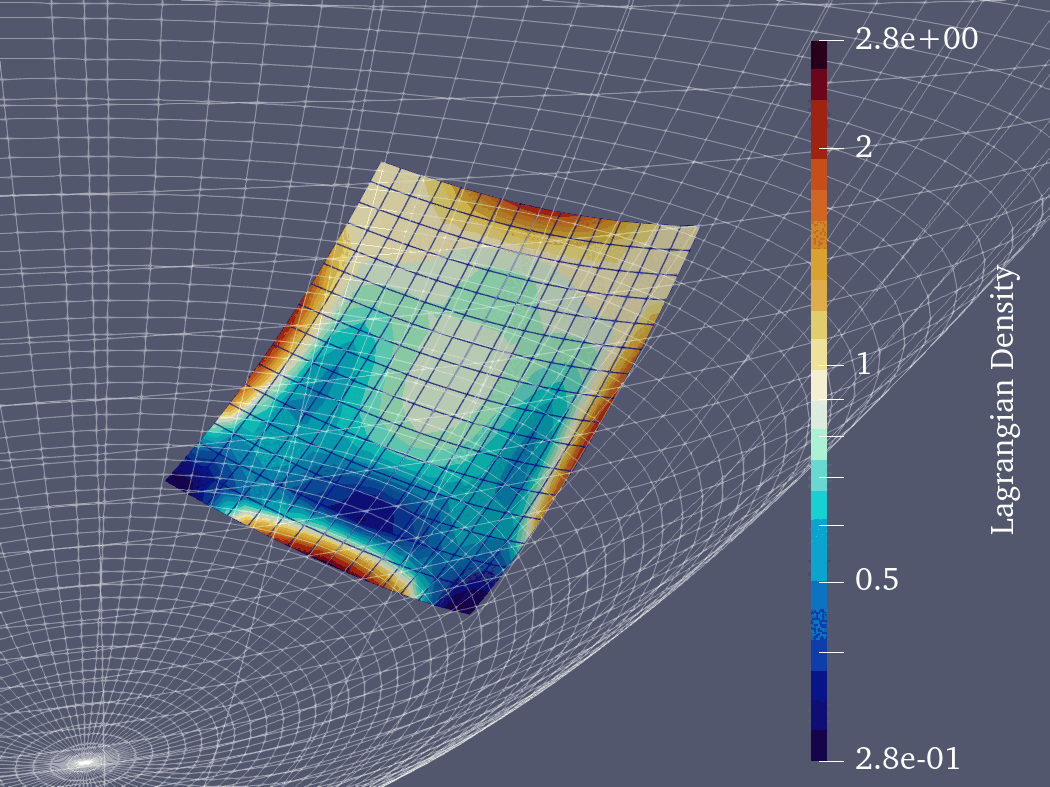}
\includegraphics[width=3.5in]{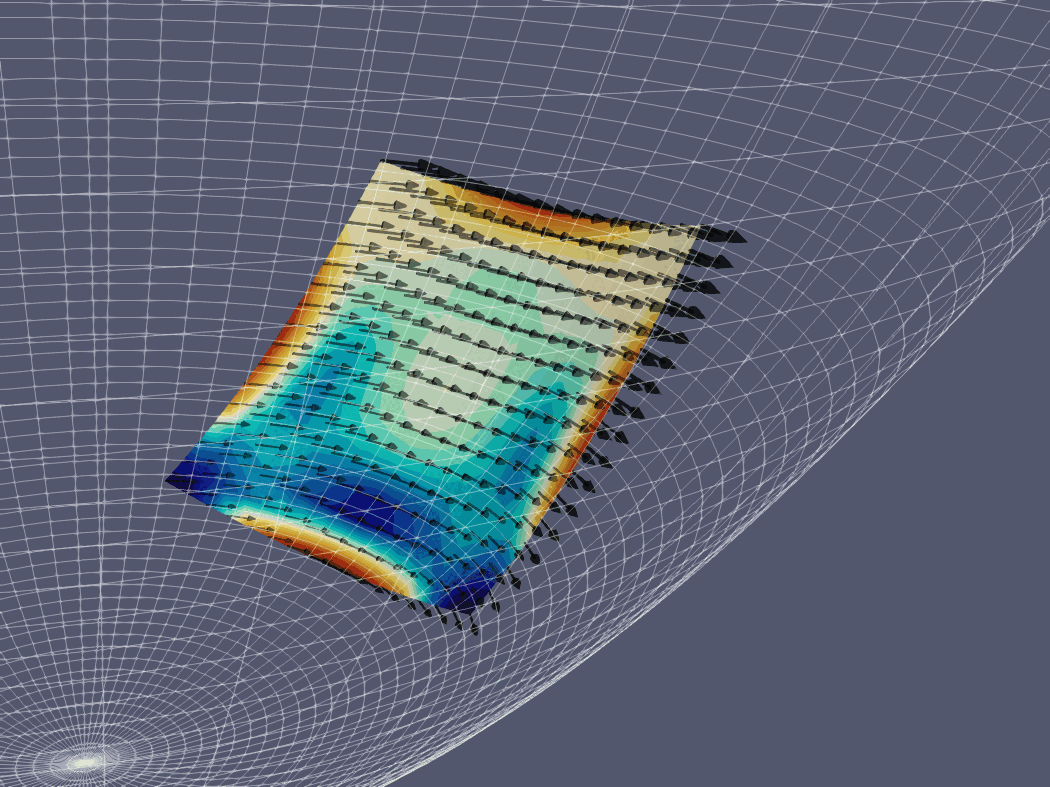}
\par\end{centering}
\caption{Left: The Lagrangian density of the body. Right: The vector field
corresponding to the expected, single zero eigenvalue of $D^{2}\mathcal{L}\left(\phi\right)$,
which is an infinitesimal rotation about the central axis of the surface
and is the generator of the symmetry group $\mathbb{S}^{1}$ of the
problem.}
\end{figure}
\par\end{center}

\begin{center}
\begin{figure}[H]
\begin{centering}
\includegraphics[width=3.5in]{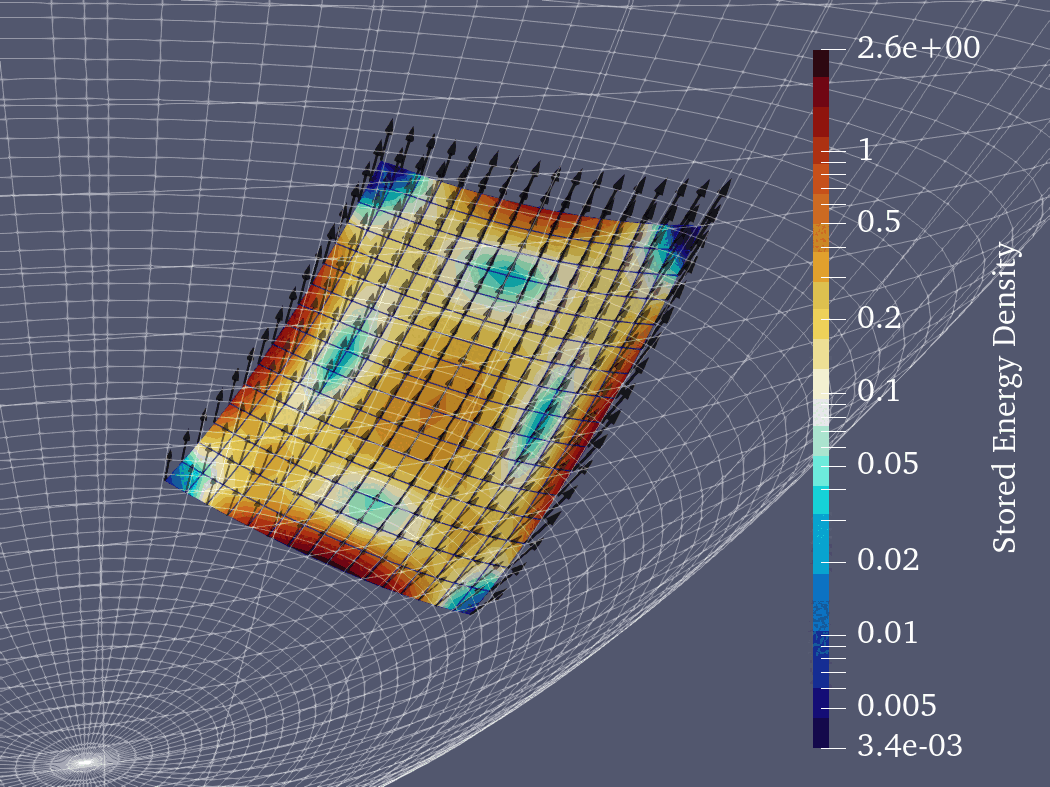}
\includegraphics[width=3.5in]{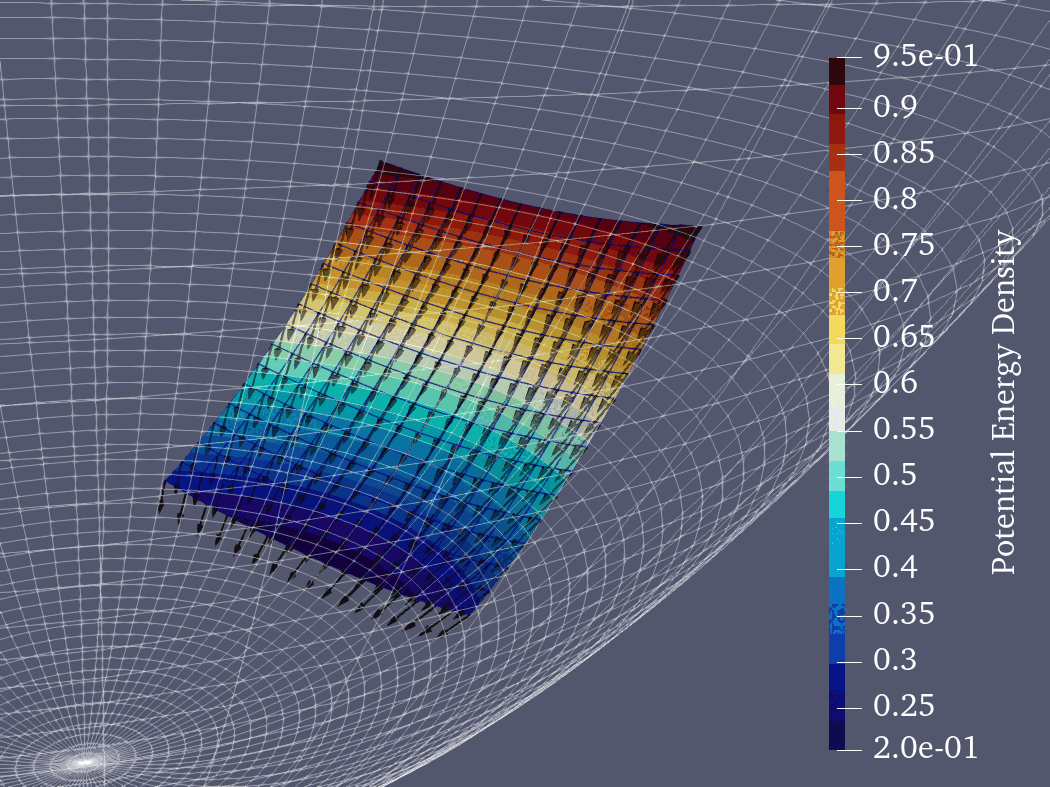}
\par\end{centering}
\caption{\protect\label{fig:paraboloid-stored-energy-density-potential-energy-density}Left:
The stored energy density of the body, along with the force field
arising from it. Right: The potential energy density of the body,
along with the force field arising from it. These two fields perfectly
cancel each other to within numerical tolerance.}
\end{figure}
\par\end{center}

\begin{figure}[H]
\begin{centering}
\includegraphics[width=3.5in]{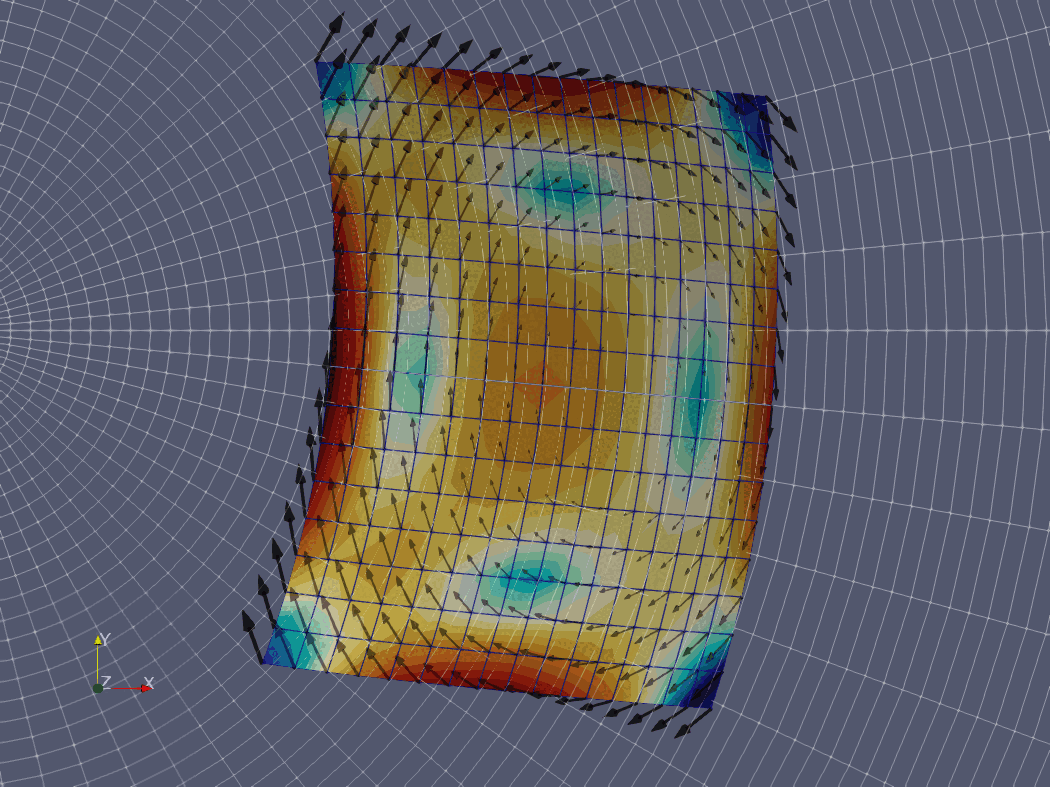}
\includegraphics[width=3.5in]{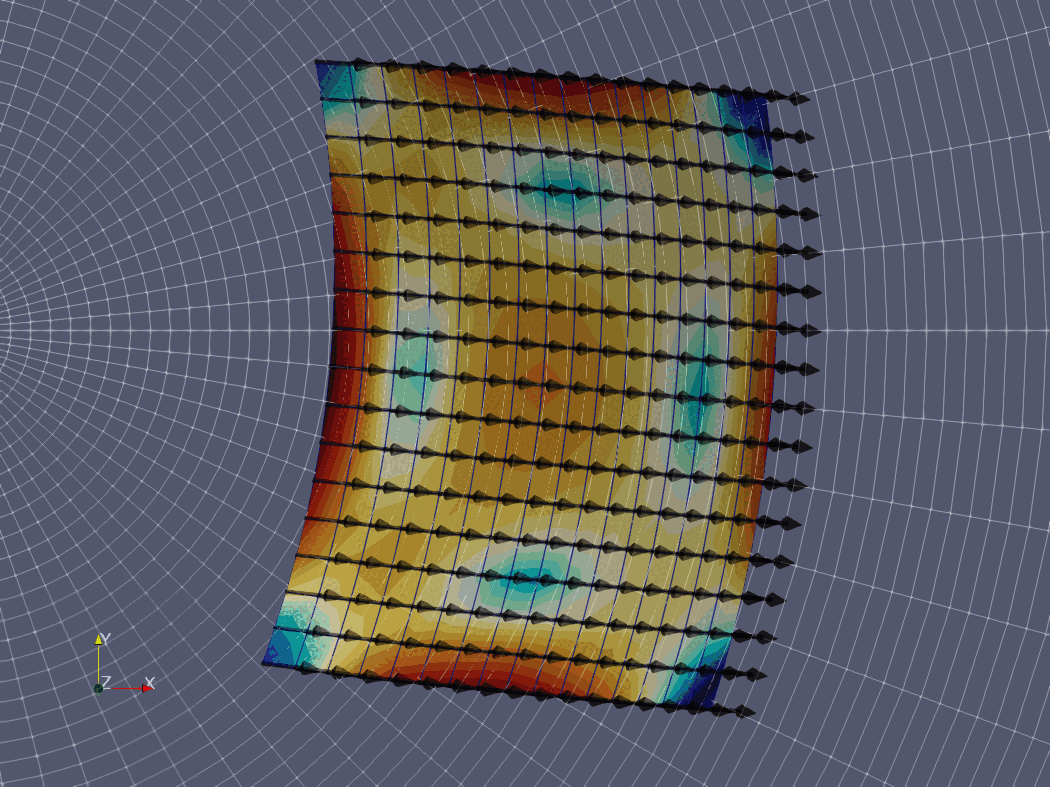}
\par\end{centering}
\begin{centering}
\includegraphics[width=3.5in]{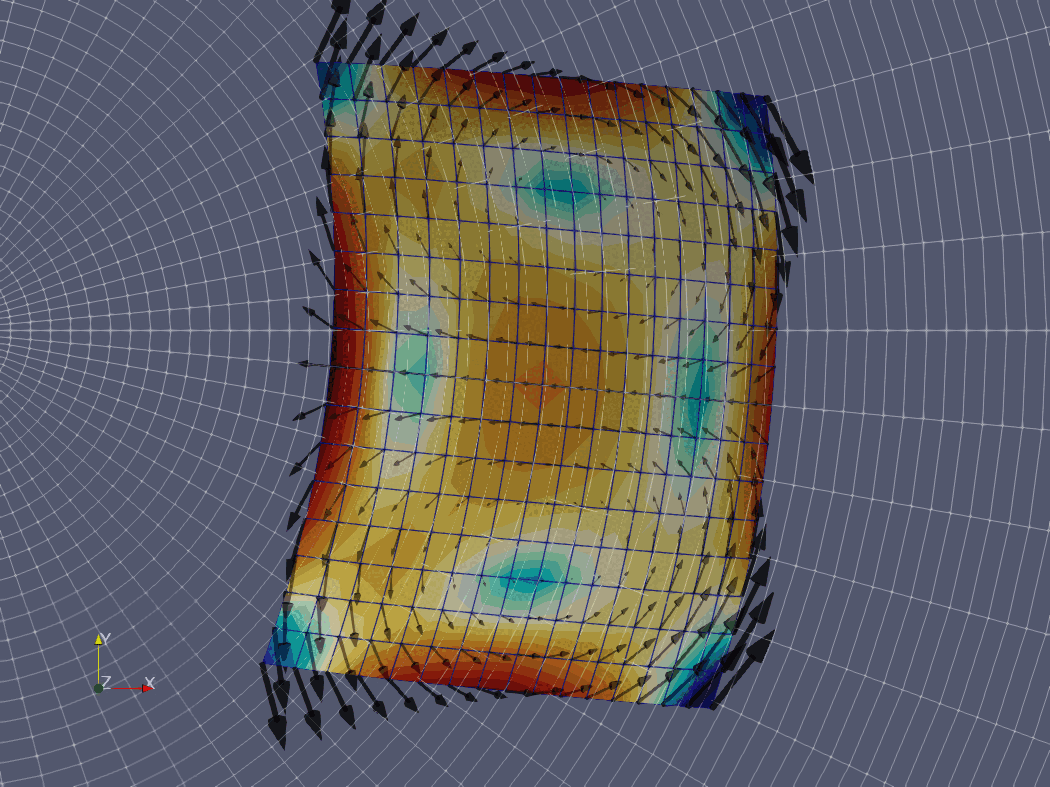}
\includegraphics[width=3.5in]{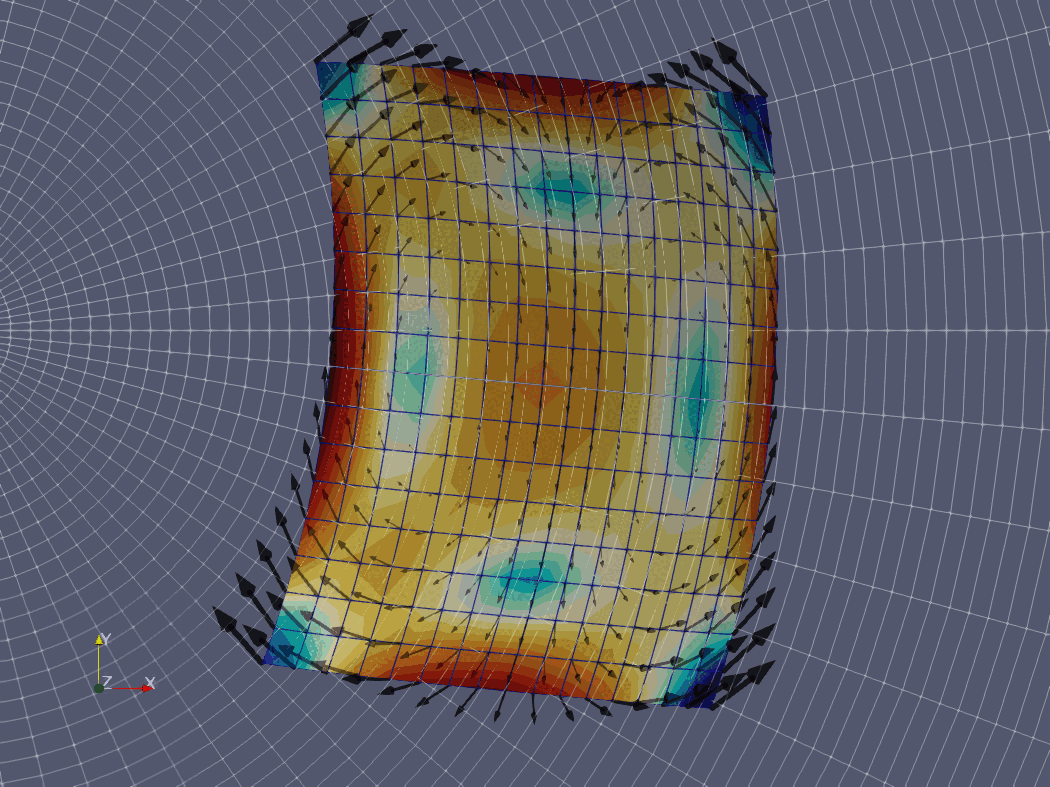}
\par\end{centering}
\caption{Top-down views of the vector fields corresponding to the smallest
four positive $D^{2}\mathcal{L}\left(\phi\right)$ eigenvalues.}
\end{figure}

\subsection{Spherical Cup Surface $z=-\sqrt{R^{2}-r^{2}}$}

This surface is the bottom half of the sphere of radius $R=5$. The
gravitational potential is $z$. The sphere has constant Gaussian
curvature $K=\frac{1}{25}$, and therefore has no curvature gradient
that a hyperelastic body can use to manifest the levitation phenomenon.
The body sits at the bottom of the cup as expected.

Here are plots showing the energy densities and vector fields that
describe important aspects of the solution. Modulo the expected symmetries,
$D^{2}\mathcal{L}\left(\phi\right)$ had only positive eigenvalues,
showing that the solution is stable.
\begin{center}
\begin{figure}[H]
\begin{centering}
\includegraphics[width=3.5in]{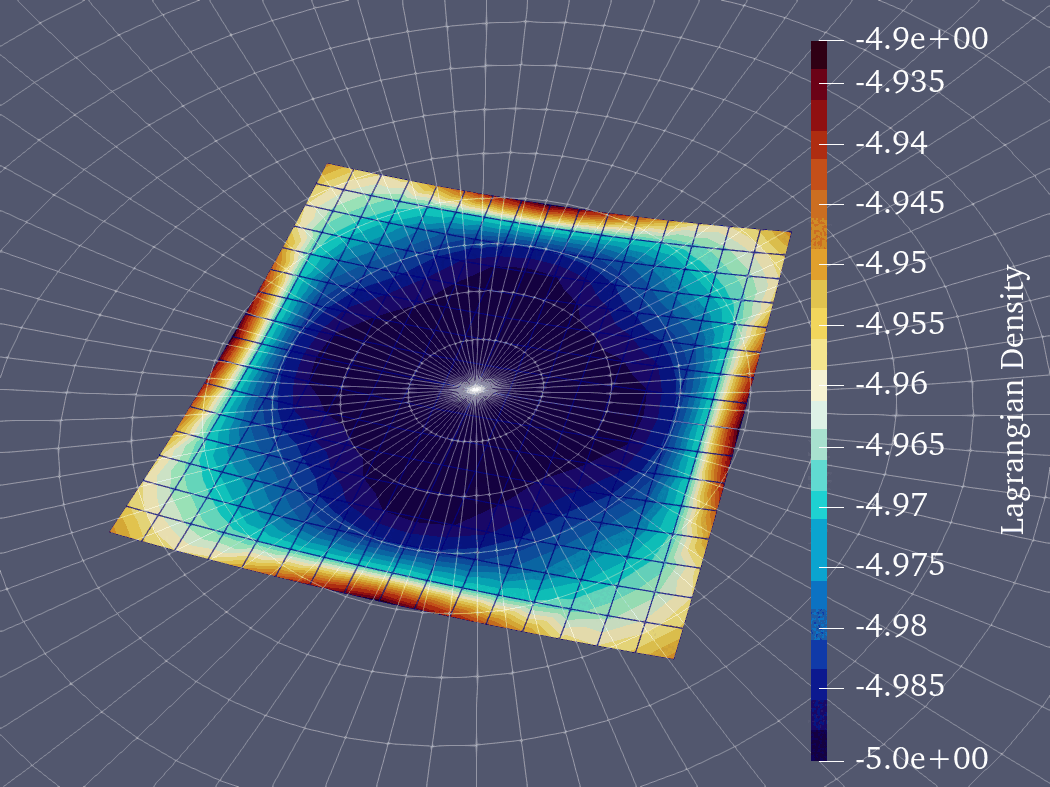}
\includegraphics[width=3.5in]{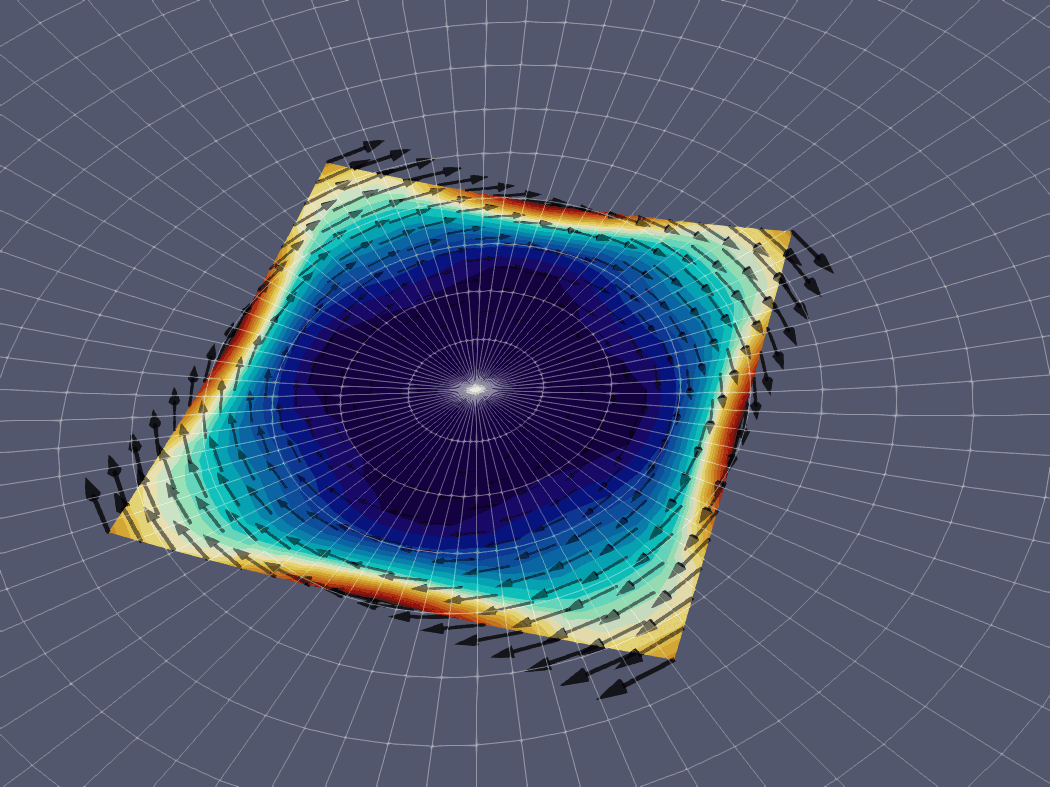}
\par\end{centering}
\caption{Left: The Lagrangian density of the body. Right: The vector field
corresponding to the expected, single zero eigenvalue of $D^{2}\mathcal{L}\left(\phi\right)$,
which is an infinitesimal rotation about the central axis of the surface
and is the generator of the symmetry group $\mathbb{S}^{1}$ of the
problem.}
\end{figure}
\par\end{center}

\begin{center}
\begin{figure}[H]
\begin{centering}
\includegraphics[width=3.5in]{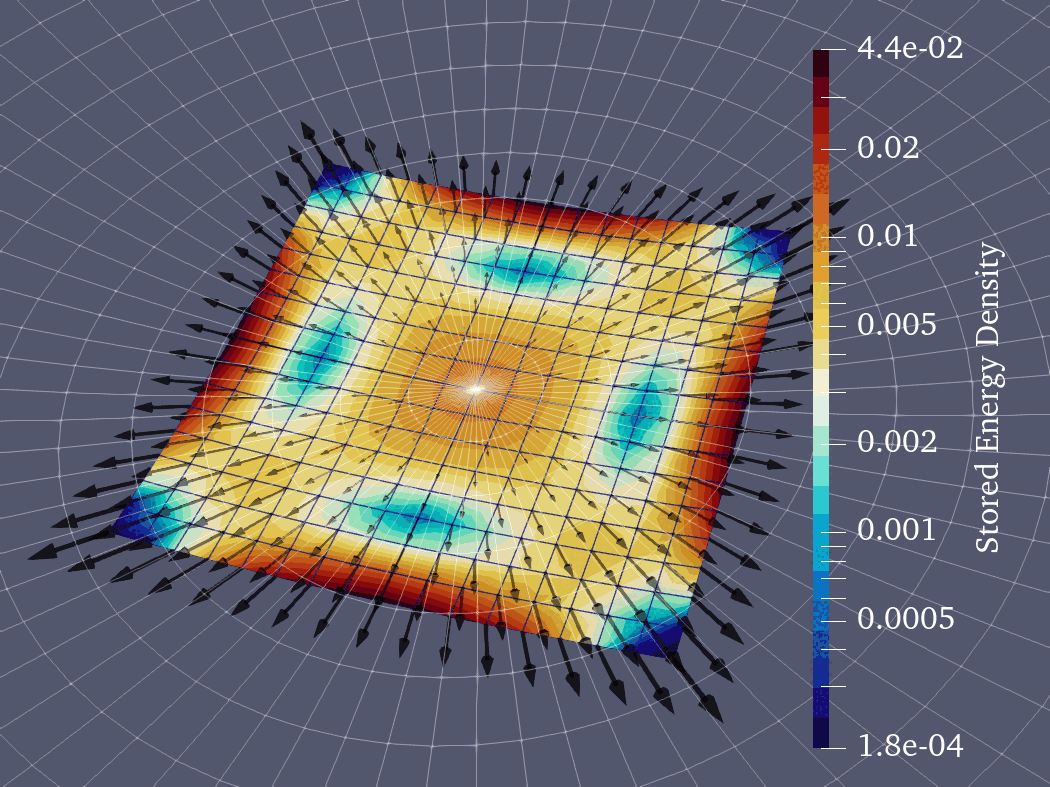}
\includegraphics[width=3.5in]{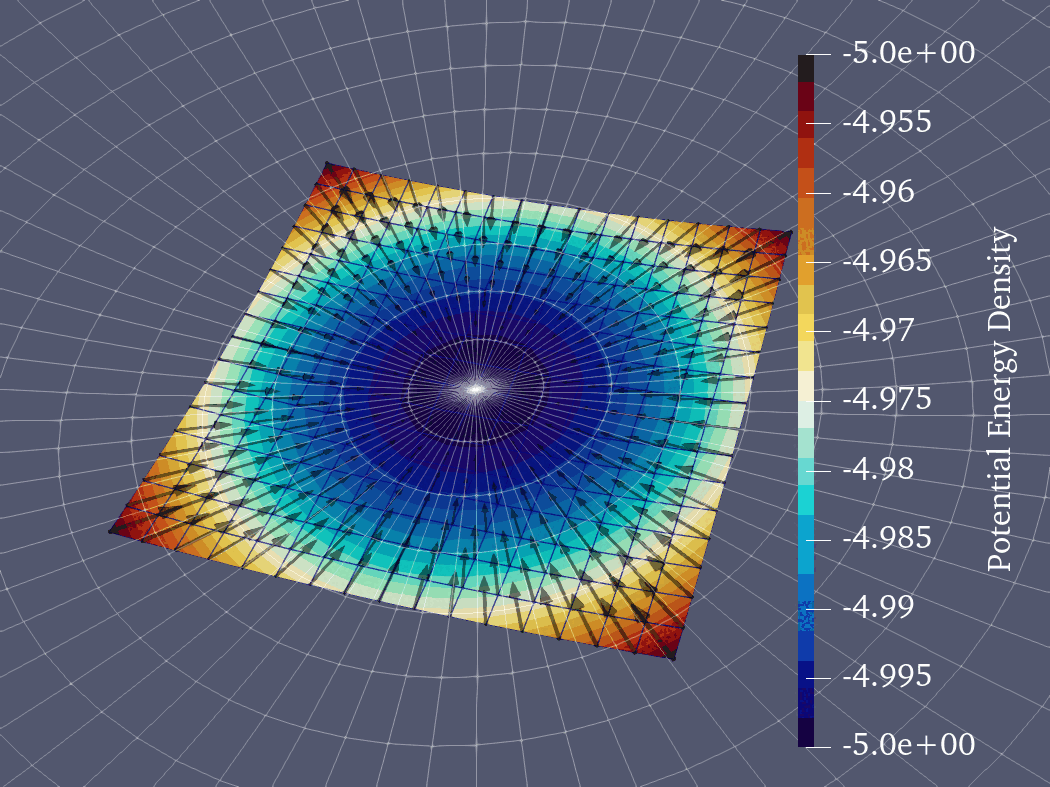}
\par\end{centering}
\caption{\protect\label{fig:spherical-cup-stored-energy-density-potential-energy-density}Left:
The stored energy density of the body, along with the force field
arising from it. Right: The potential energy density of the body,
along with the force field arising from it. These two fields perfectly
cancel each other to within numerical tolerance.}
\end{figure}
\par\end{center}

\begin{figure}[H]
\begin{centering}
\includegraphics[width=3.5in]{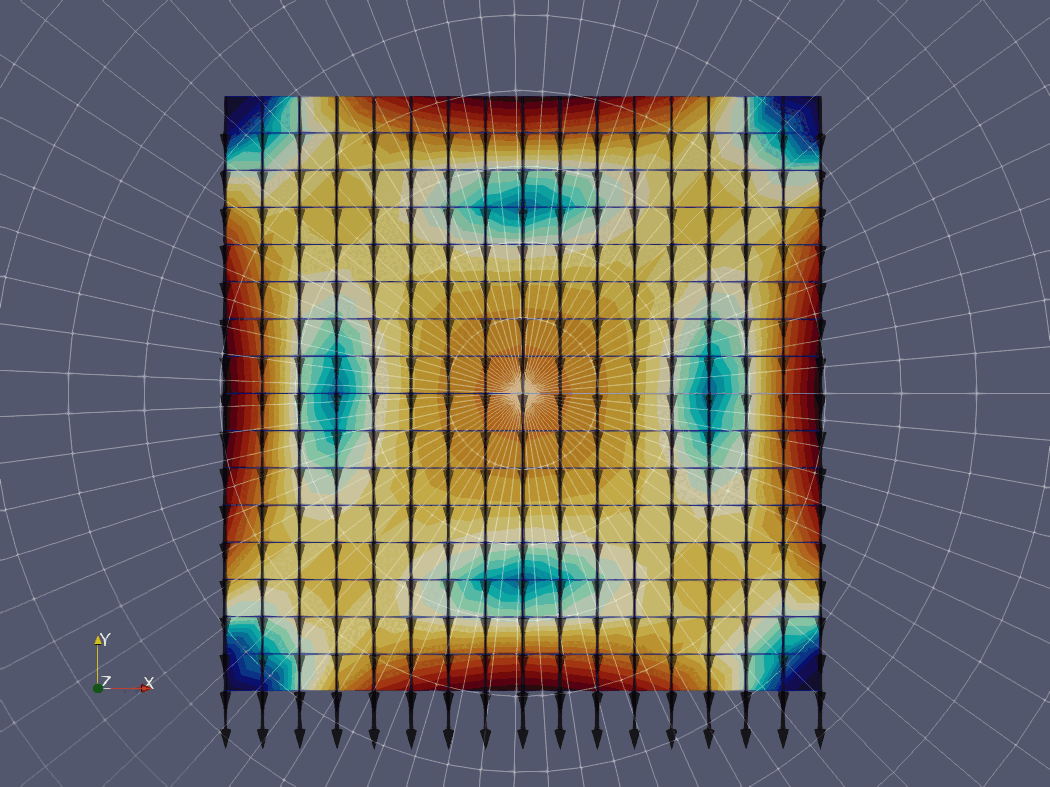}
\includegraphics[width=3.5in]{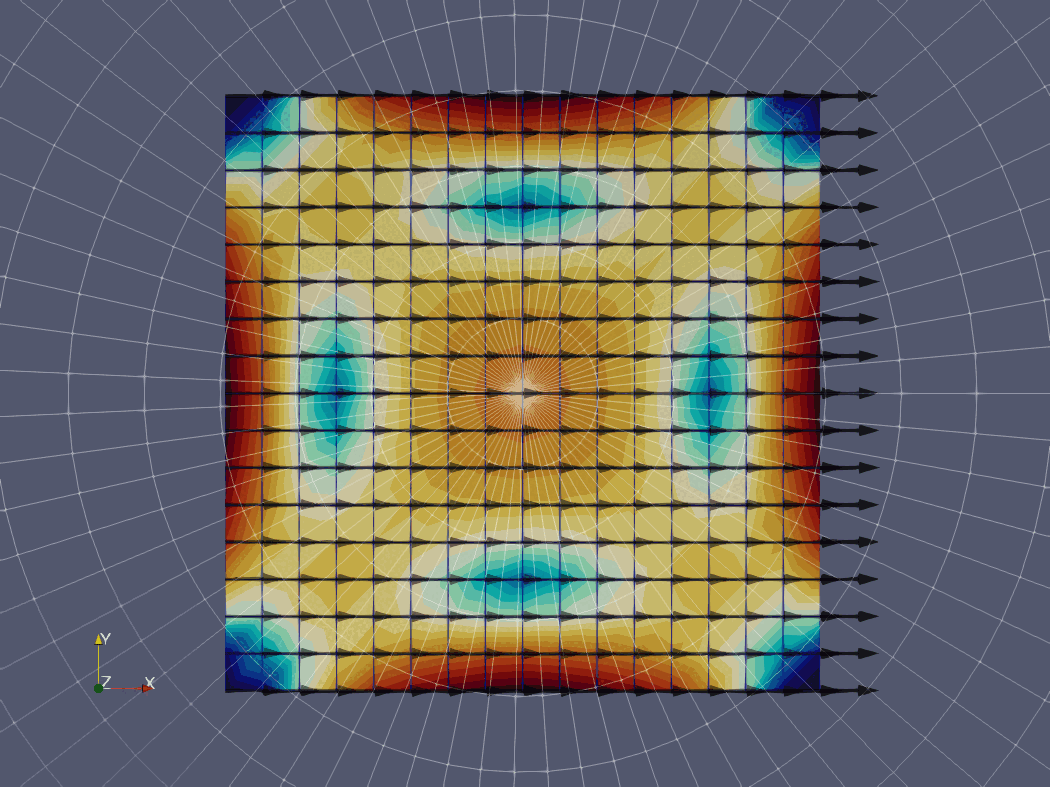}
\par\end{centering}
\begin{centering}
\includegraphics[width=3.5in]{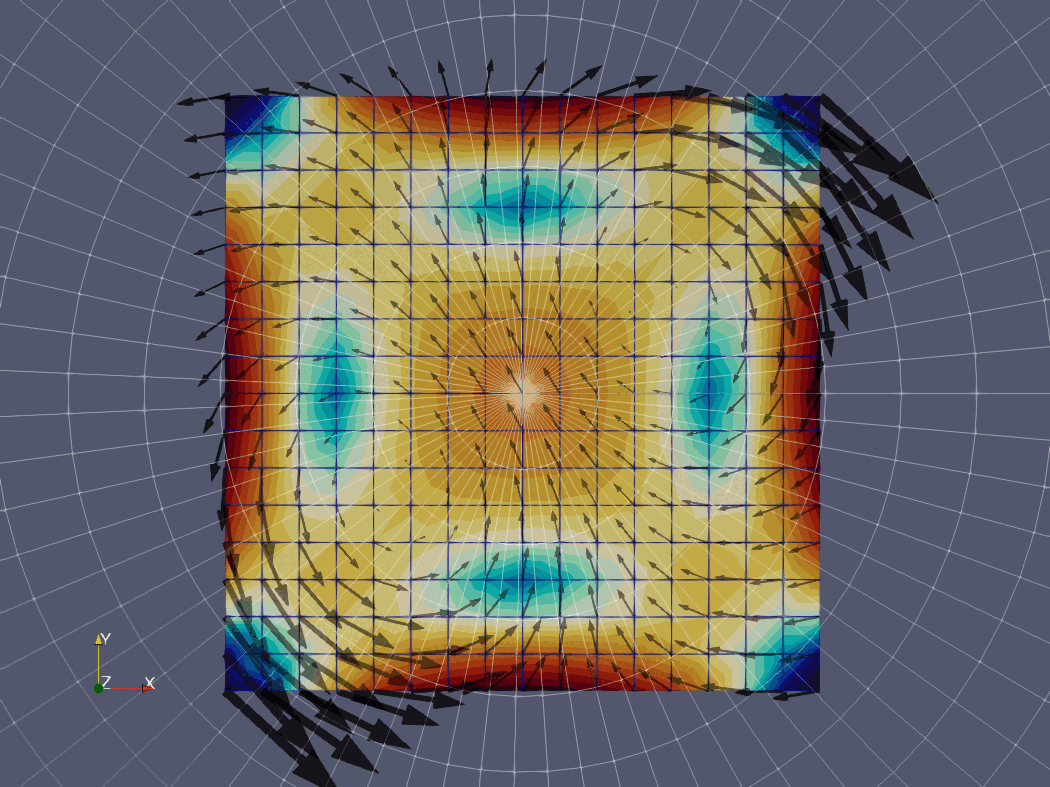}
\includegraphics[width=3.5in]{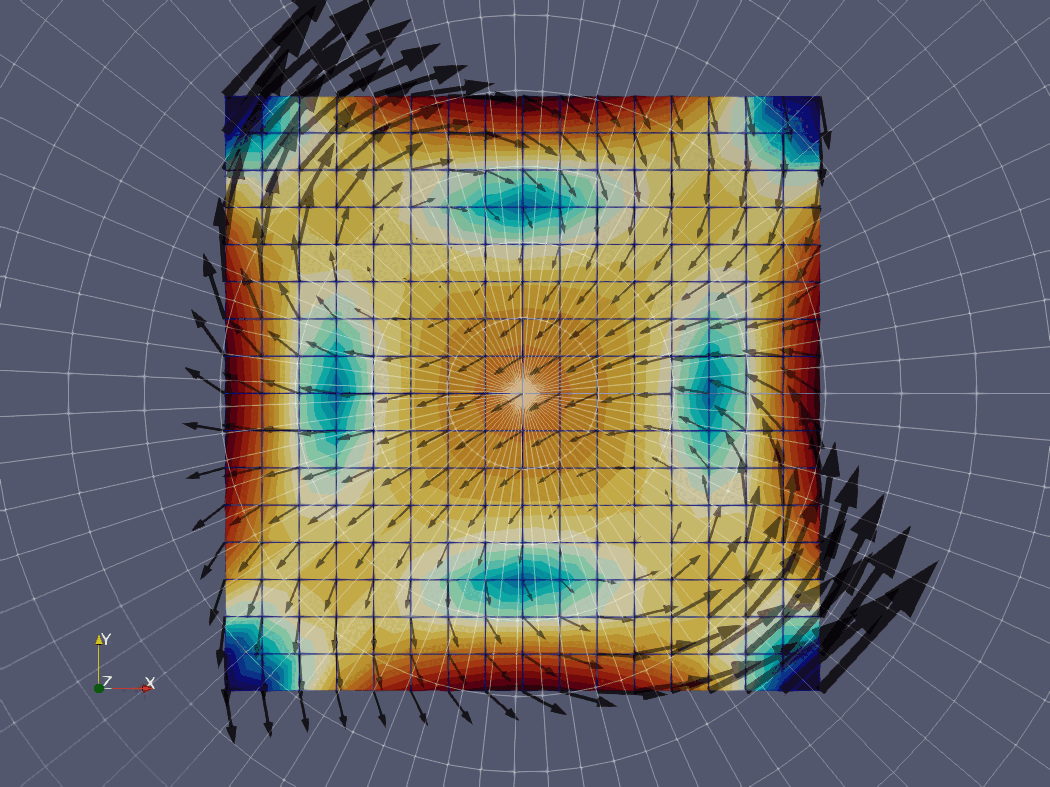}
\par\end{centering}
\caption{Top-down views of the vector fields corresponding to the smallest
four positive $D^{2}\mathcal{L}\left(\phi\right)$ eigenvalues.}
\end{figure}

\subsection{Flamm's Paraboloid Surface $z=2\sqrt{r_{s}\left(r-r_{s}\right)}$}

The Schwarzschild metric is a solution to Einstein's equations that
is spherically symmetric and features a central singularity -- a
point having mass $M$ that generates the curvature of its spacetime.
It is the archetypical model of a black hole. The Schwarzschild radius
$r_{s}=\frac{2GM}{c^{2}}$, where $G$ is Newton's gravitational constant
and $c$ is the speed of light, defines the black hole's event horizon,
inside which all time- and light-like trajectories inexorably lead
to the central singularity.

Flamm's paraboloid is an isometric immersion of a 2-dimensional, space-like
slice of the Schwarzschild spacetime (outside the event horizon) into
Euclidean space, giving a partial, visual representation of the spatial
curvature of that spacetime. The gravitational potential in the solution
presented here is $z$, just as in the other solutions, and is not
meant to model the actual gravity experienced in this spacetime. Flamm's
paraboloid has Gaussian curvature $K\left(r\right)=-\frac{r_{s}}{2r^{3}}$,
which is everywhere negative, and $\left|K\right|\to0$ monotonically
as $r\to\infty$, and thus the curvature levitator is possible anywhere
in this surface.

Here are plots showing the energy densities and vector fields that
describe important aspects of the solution. Modulo the expected symmetries,
$D^{2}\mathcal{L}\left(\phi\right)$ had only positive eigenvalues,
showing that the solution is stable.
\begin{center}
\begin{figure}[H]
\begin{centering}
\includegraphics[width=3.5in]{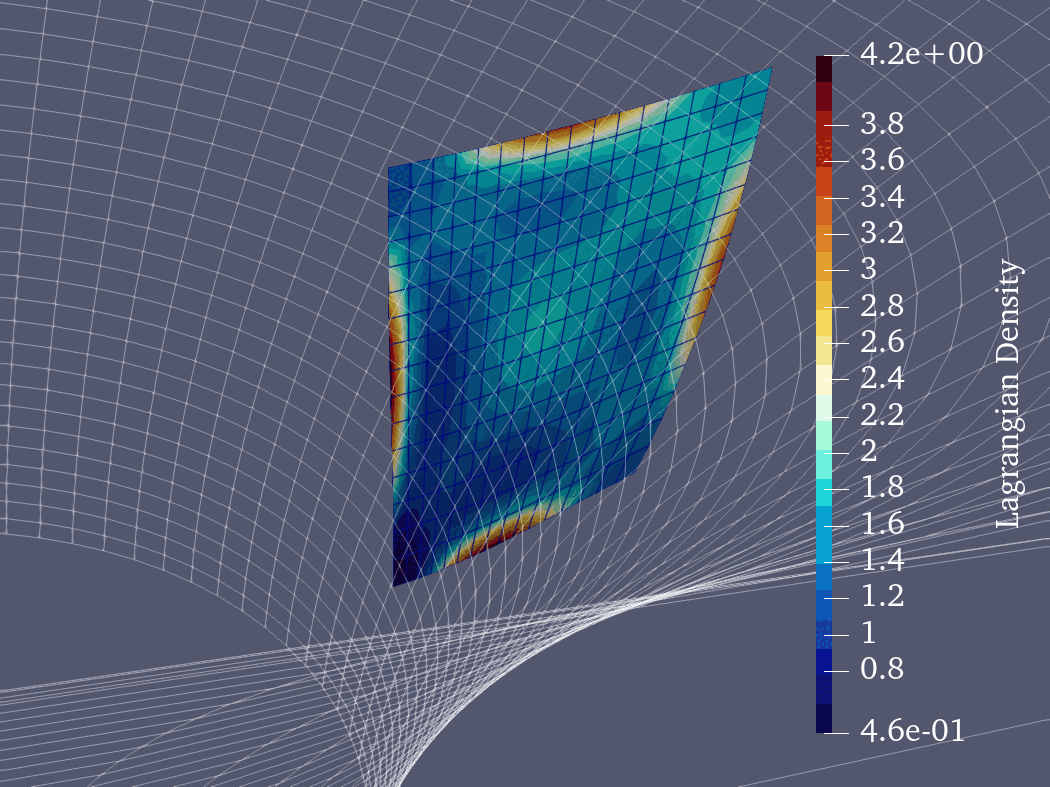}
\includegraphics[width=3.5in]{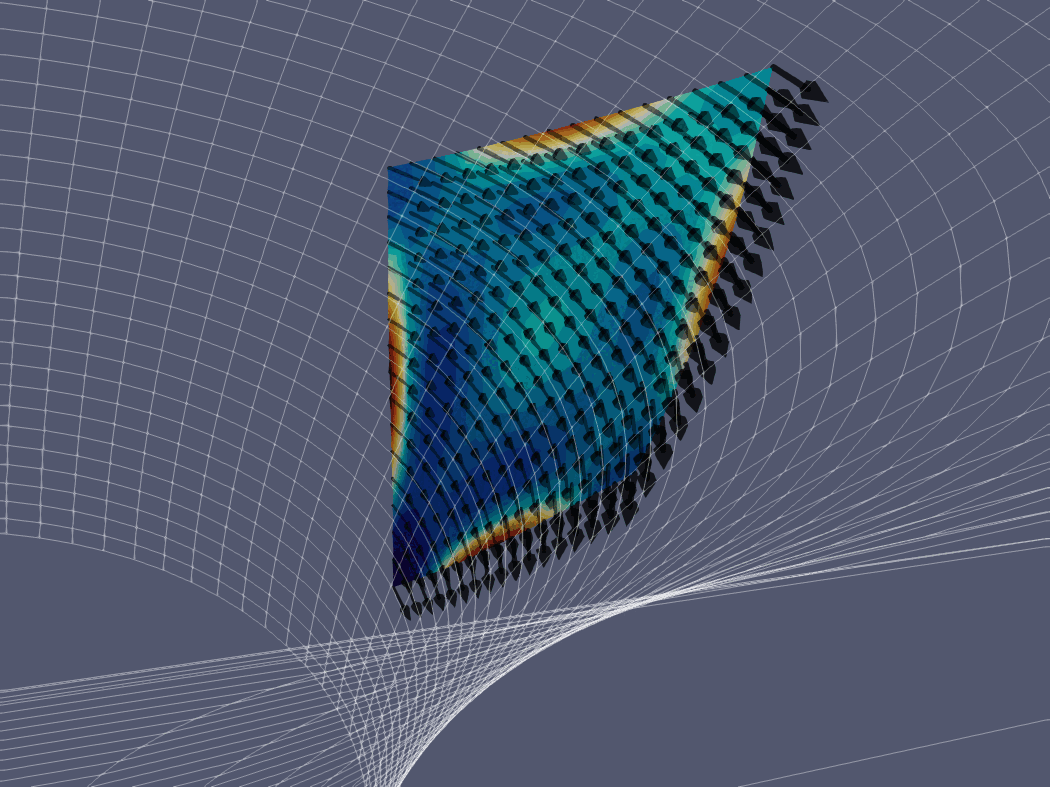}
\par\end{centering}
\caption{Left: The Lagrangian density of the body. Right: The vector field
corresponding to the expected, single zero eigenvalue of $D^{2}\mathcal{L}\left(\phi\right)$,
which is an infinitesimal rotation about the central axis of the surface
and is the generator of the symmetry group $\mathbb{S}^{1}$ of the
problem.}
\end{figure}
\par\end{center}

\begin{center}
\begin{figure}[H]
\begin{centering}
\includegraphics[width=3.5in]{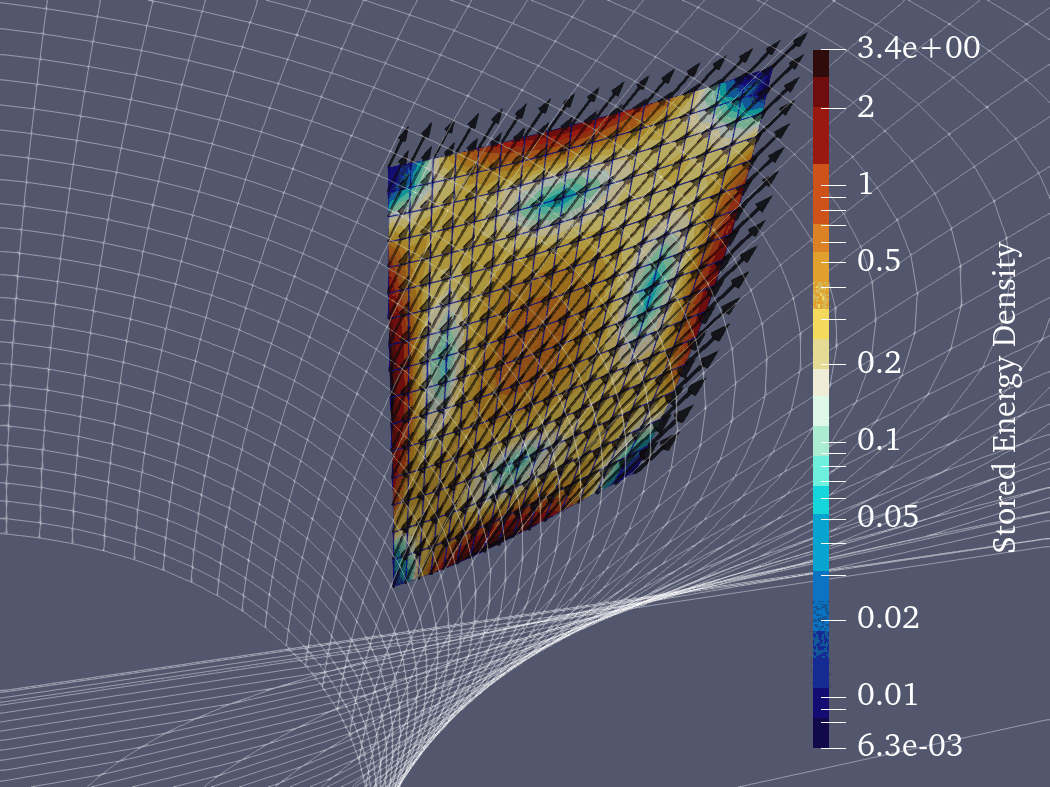}
\includegraphics[width=3.5in]{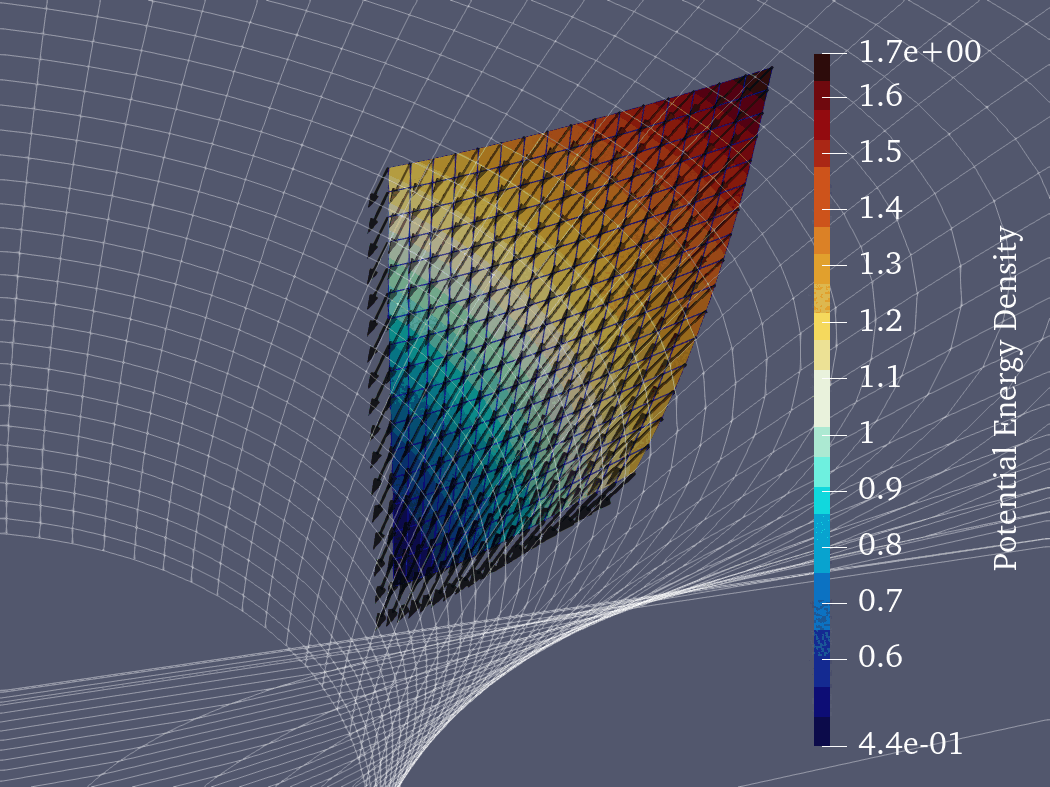}
\par\end{centering}
\caption{\protect\label{fig:spherical-cup-stored-energy-density-potential-energy-density-1}Left:
The stored energy density of the body, along with the force field
arising from it. Right: The potential energy density of the body,
along with the force field arising from it. These two fields perfectly
cancel each other to within numerical tolerance.}
\end{figure}
\par\end{center}

\begin{figure}[H]
\begin{centering}
\includegraphics[width=3.5in]{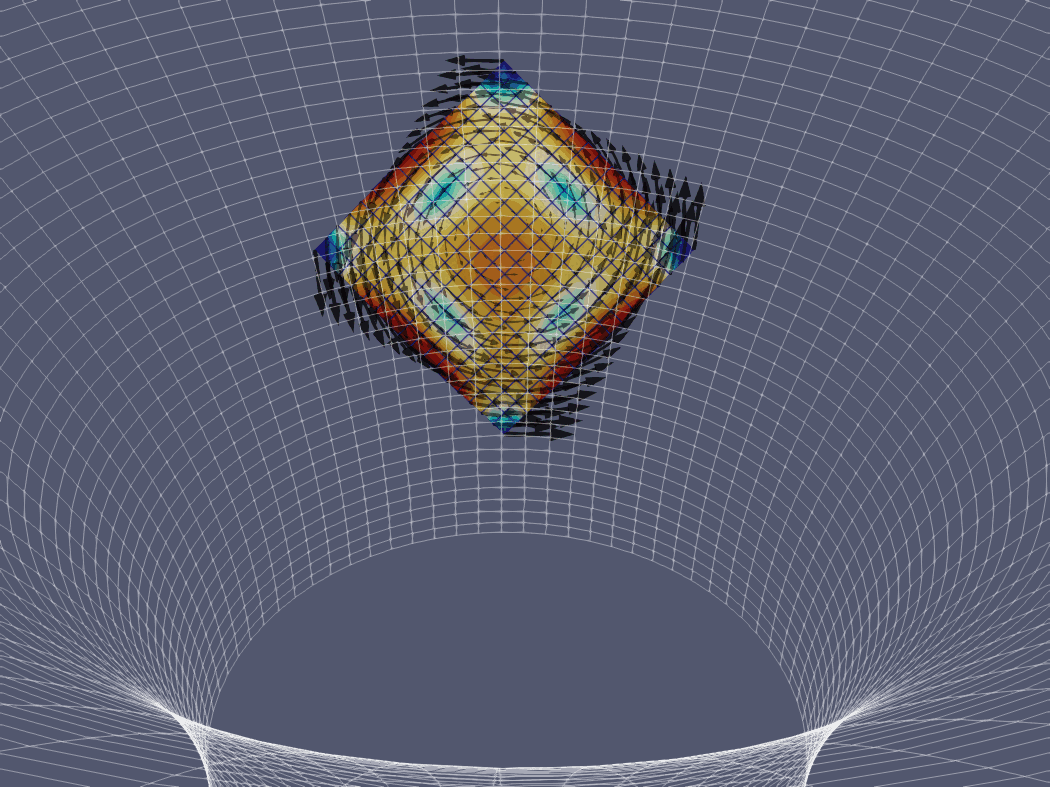}
\includegraphics[width=3.5in]{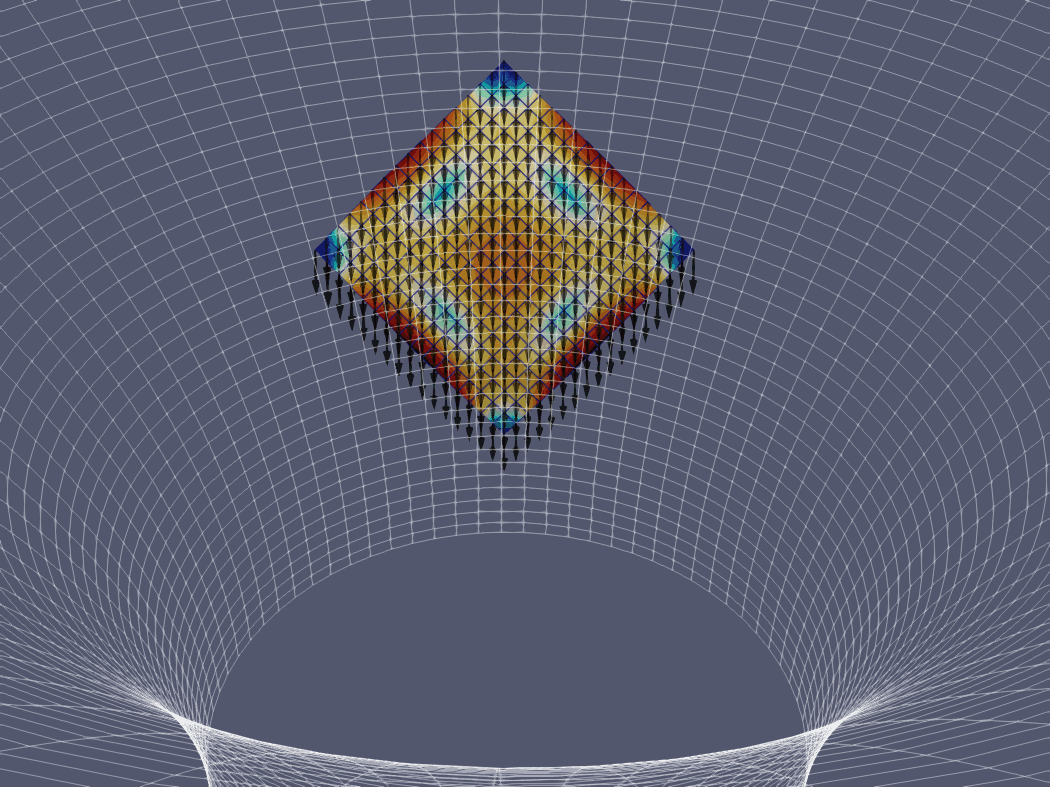}
\par\end{centering}
\begin{centering}
\includegraphics[width=3.5in]{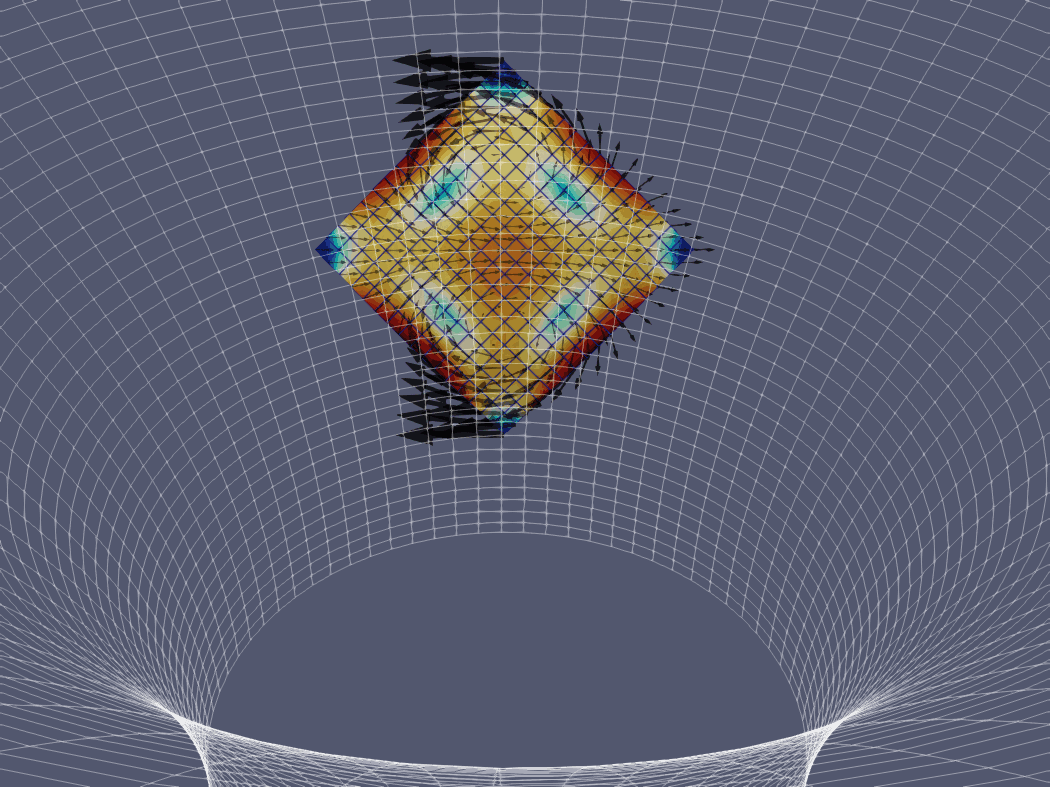}
\includegraphics[width=3.5in]{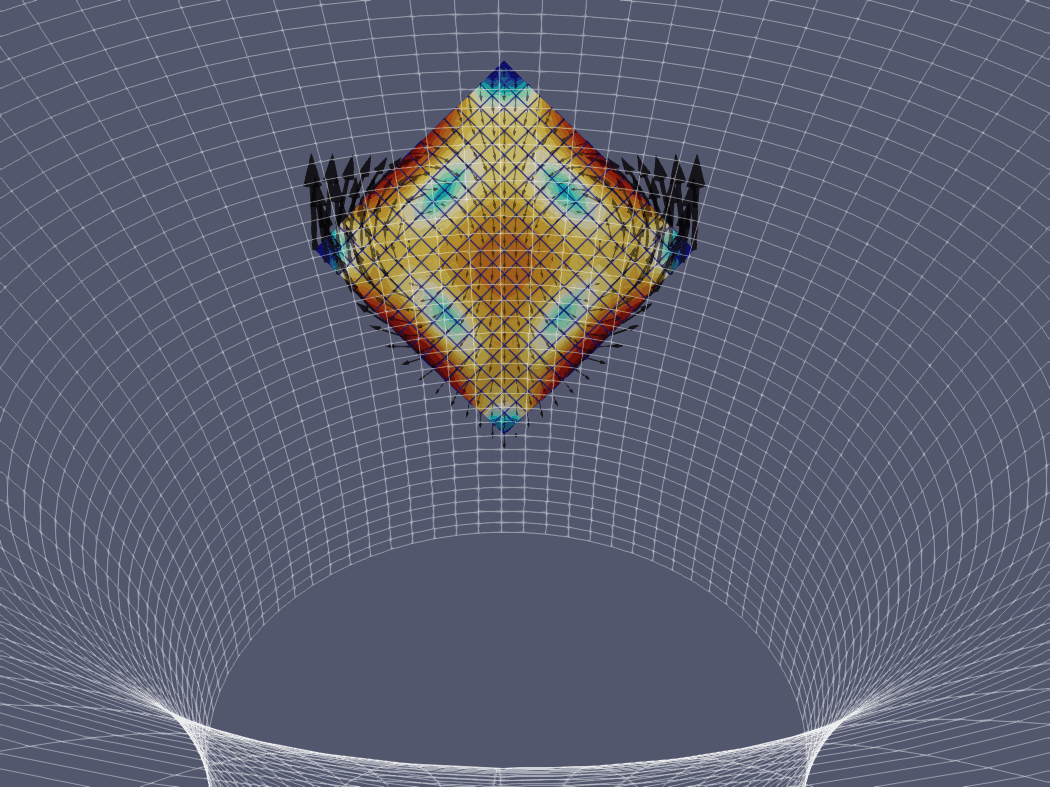}
\par\end{centering}
\caption{The vector fields corresponding to the smallest four positive $D^{2}\mathcal{L}\left(\phi\right)$
eigenvalues.}
\end{figure}

\section{Discussion}

Some of the statements in this section were derived from informal
reasoning or are otherwise speculative, and would make good subject
material for formal proof or numeric computation.

One salient feature of solid-body mechanics (this includes plastic,
elastic, and hyperelastic mechanics) is that \textbf{it takes work
to deform a body}. Because a hyperelastic body has a stored energy
function that defines its stress-strain response, i.e. is a conservative
system, work that increases the body's deformation goes into increasing
the body's stored energy. That energy is recovered as the body returns
toward its rest shape.

In the setting of Riemannian manifolds, a body's deformation can be
the result of simply being in a region of the manifold with shape
different from the rest shape of the body, completely absent of external,
spatial forces -- a phenomenon not found in the mechanics of Euclidean
spaces. This is the subject of the study of Incompatible Elasticity
\citet{kupferman_shamai_2012}. If the body moves toward a region
that deforms the body further from its rest shape, the body experiences
repulsive forces (a force field) that would help the body restore
its shape. This is what allows the curvature levitator phenomenon.
Similarly, it is possible for a freely-moving hyperelastic body to
\textquotedbl bounce off\textquotedbl{} of some region of space --
no contact with any other physical body and no external spatial forces
involved -- if that region is sufficiently different in shape than
the body and if the body's material is stiff enough. This is definitely
not a phenomenon found in Euclidean mechanics!

If the rest shape of the body were controllable, say by declaring
its rest shape to have constant curvature (Gaussian in the 2-dimensional
case, and sectional otherwise), then that curvature could be used
as a control variable to determine the \textquotedbl elevation\textquotedbl{}
at which its equilibrium is achieved. For example, in the funnel surface
$z=-r^{-1}$, if the body's rest shape were changed from zero-Gaussian-curvature
to positive-Gaussian-curvature, then the stored energy at any given
location would be higher, and the body's restorative forces would
be higher, pushing it away from the source of gravity. This could
fairly be called a \textbf{curvature elevator}.

More generally, if a body occupied a region of space with sufficiently
irregular curvature, and that body could control its rest shape with
sufficient refinement, it could potentially be made to generate forces
moving it in arbitrary directions. The control scheme would certainly
be extremely complicated, but if done, it could fairly be called a
\textbf{curvature locomotor}. 

It should be reiterated at this point that in order for a body to
generate curvature-induced forces that are commensurate with some
particular spatial force (e.g. to counteract a gravitational force),
the body's material must have sufficient stiffness -- in the case
of the material defined by $W$ from \subsecref{A-Physically-Reasonable-Material},
the stiffness is given by the \textquotedbl modulus of rigidity\textquotedbl{}
$\alpha$. Furthermore, in general the larger the body, the more deformation
it will experience, meaning that it can better take advantage of the
curvature-induced forces and its stiffness requirement will be less.
The smaller the body, the less deformation it will experience, meaning
that its stiffness requirement will be greater. In the limit, an infinitesimally-sized
body (i.e. a point) experiences no curvature-induced forces as there
is no possibility for deformation, and therefore is \textquotedbl just\textquotedbl{}
the particle mechanics of Riemannian manifolds.

Setting aside hyperelastic bodies for a moment, any deformable body
that requires work to deform will experience forces that resist its
change in shape. A plastic body is defined as not having a rest shape
-- it accepts deformations, but still requires work to do so. A plastic
body undergoing deformation as it moves through differently-curved
regions of a Riemannian manifold would experience not a bouncing-off-of,
but rather a velocity- and curvature-gradient-dependent resistance
as its rest shape changes. The energy put into deforming the body
would not be recoverable. Such a body moving under its own inertia
through differently-curved regions of a Riemannian manifold would
gradually slow down and heat up -- a kind of spatial-curvature-induced
\textquotedbl friction\textquotedbl{} -- another phenomenon not
encountered in Euclidean space.

A hyperelastic body, too, would be expected to encounter a kind of
friction, but for a different reason. As it moves through a region
of space with curvature taking it further from its rest shape, it
deforms, causing internal oscillations at the expense of some of the
kinetic energy that is pushing it into that region of space. Those
oscillations will bounce off the boundary of the body and will tend
to get more complicated (imagine circular waves bouncing off of a
polygonal/polyhedral boundary, the overall wave pattern getting more
complex with each bounce), gradually gaining higher-frequency modes
until most of the energy in the oscillations is \textquotedbl lost\textquotedbl{}
in the high frequencies. The kinetic energy in those oscillations
becomes incoherent and useless on the macro-scale -- it's heat. Thus
the body's \textquotedbl coherent\textquotedbl{} kinetic energy (i.e.
corresponding to motion in a particular overall direction) is irrecoverably
reduced, which can fairly be described as a kind of friction.

\section{Avenues for Future Work}
\begin{itemize}
\item Solve the curvature levitator problem in 3-dimensional Riemannian
manifolds.
\item Develop formal existence proofs for weak and strong curvature levitator
solutions in 2- and 3- dimensional spaces.
\item Implement body domains more complex than a rectangle. In particular,
making the body a disc would likely increase the \textquotedbl efficiency\textquotedbl{}
of the curvature levitator by getting rid of the corners of the body
-- as they do no useful work, based on the stored energy density
plot of the solution in this article. Furthermore, identify other,
internal regions of the body -- such as the four lighter regions
near the center of the body in the stored energy density plots of
\figref{funnel-stored-energy-density-potential-energy-density} and
\figref{paraboloid-stored-energy-density-potential-energy-density}
-- that are not performing much work and carve them out to make the
body lighter but without compromising its ability to resist deformation.
\item Solve the curvature elevator problem: Control the rest shape of a
body so that it can control its equilibrium elevation. Start with
constant Gaussian/sectional curvature as a control variable.
\item Solve the curvature locomotor problem: In a highly irregularly curved
manifold, control the rest shape of a body to move in arbitrary directions,
taking advantage of the curvature gradients in many directions.
\item Solve the relativistic version of this problem in 2+1 and 3+1 dimensions.
\item Implement faster and higher quality numerical optimization.
\item Implement the dynamic hyperelastic problem, ideally producing a realtime
simulation which is interactive. This would provide a great way to
gain intuition for the physics of hyperelastic bodies in Riemannian
manifolds, and finally find out what happens when you push that cubic
meter of jello into a wormhole!
\end{itemize}

\appendix

\section{Proofs\protect\label{sec:Proofs}}

The proofs in this section will make heavy use of the tensor calculus
formalism detailed in \citet{Dods2022}. It will also clarify the
exposition to use a subscript notation that indicates the specific
tensor factor(s) on which natural pairings are taken (see \citet[pg. 7]{Dods2022}).
For example,
\begin{align*}
\cdot_{\phi^{*}TS}\colon\phi^{*}T^{*}S\times\phi^{*}TS & \to\mathbb{R}, & \cdot_{\phi^{*}TS\otimes T^{*}B}\left(\phi^{*}T^{*}S\otimes TB\right)\times\left(\phi^{*}TS\otimes T^{*}B\right) & \to\mathbb{R},\\
\left(\alpha,x\right) & \mapsto\alpha\left(x\right), & \left(\alpha\otimes y,x\otimes\beta\right) & \mapsto\alpha\left(x\right)\beta\left(y\right).
\end{align*}

\subsection{Proofs for \Subsecref{Riemannian-Calculus-of-Variations} - Riemannian
Calculus of Variations\protect\label{subsec:Proofs-for-RCOV}}
\begin{proof}[Proof of \propref{first-variation-weak-form} - First variation of
general $\mathcal{L}$ in weak form]
Let $\psi\in\Gamma\left(\phi^{*}TS\right)$ denote a variation of
$\phi$. For brevity, let
\begin{align*}
\nabla\phi & :=\nabla^{B\to S}\phi\in\Gamma\left(\phi^{*}TS\otimes T^{*}B\right), & \nabla\psi & :=\nabla^{\phi^{*}TS}\psi\in\Gamma\left(\phi^{*}TS\otimes T^{*}B\right),
\end{align*}
\begin{align*}
L_{,\sigma}^{\nabla\phi} & :=\left(\nabla\phi\right)^{*}L_{,\sigma}\in\Gamma\left(\phi^{*}T^{*}S\right), & L_{,\beta}^{\nabla\phi} & :=\left(\nabla\phi\right)^{*}L_{,\beta}\in\Gamma\left(T^{*}B\right), & L_{,\Phi}^{\nabla\phi} & :=\left(\nabla\phi\right)^{*}L_{,\Phi}\in\Gamma\left(\phi^{*}T^{*}S\otimes TB\right).
\end{align*}
The equality to be proven is
\begin{align*}
D\mathcal{L}\left(\phi\right)\cdot_{\Gamma\left(\phi^{*}TS\right)}\psi & =\int_{B}L_{,\sigma}^{\nabla\phi}\cdot_{\phi^{*}TS}\psi+L_{,\Phi}^{\nabla\phi}\cdot_{\phi^{*}TS\otimes T^{*}B}\nabla\psi\,d\mu_{B}.
\end{align*}

Let $I\subset\mathbb{R}$ be an interval containing $0$, let $\epsilon$
be the standard coordinate on $\mathbb{R}$, and let $\delta_{\epsilon}:=\partial_{\epsilon}\mid_{\epsilon=0}$.
Let
\begin{align*}
z\colon B & \to B\times I,\\
b & \mapsto\left(b,0\right),
\end{align*}
denoting evaluation of the $I$ parameter at $\epsilon=0$, so that
$\delta_{\epsilon}=z^{*}\partial_{\epsilon}$. Let
\begin{align*}
\Psi\colon B\times I & \to S
\end{align*}
be a $C^{2}$ variation of $\phi$, meaning that $\Psi\left(\cdot,0\right)=\phi\left(\cdot\right)$
and $z^{*}\partial_{\epsilon}\Psi=\psi$. Let $p\colon B\times I\to B,\,\left(b,\epsilon\right)\mapsto b$.
Let $\partial_{B}$ denote differentiation along the $B$ component,
rendering a direction tensor factor of type $p^{*}T^{*}B$. This can
occur in a nonlinear covariant derivative (i.e. tangent map as tensor
field) as in $\partial_{B}\Psi\in\Gamma\left(\Psi^{*}TS\otimes p^{*}T^{*}B\right)$
or in a linear covariant derivative, for example, $\nabla_{\partial_{B}}\partial_{\epsilon}\Psi\in\Gamma\left(\Psi^{*}TS\otimes p^{*}T^{*}B\right)$.
Then
\begin{align*}
D\mathcal{L}\left(\phi\right)\cdot\psi & =D\mathcal{L}\left(\phi\right)\cdot\delta_{\epsilon}\Psi & \text{(definition of \ensuremath{\Psi})}\\
 & =\delta_{\epsilon}\left(\mathcal{L}\left(\Psi\right)\right) & \text{(definition of \ensuremath{D\mathcal{L}})}\\
 & =\int_{B}\delta_{\epsilon}\left(L\circ\partial_{B}\Psi\right)\,d\mu_{B}\\
 & =\int_{B}z^{*}\partial_{\epsilon}\left(L\circ\partial_{B}\Psi\right)\,d\mu_{B} & \text{(definition of \ensuremath{\delta_{\epsilon}})}\\
 & =\int_{B}z^{*}\left(\left(\partial_{B}\Psi\right)^{*}dL\cdot_{\left(\partial_{B}\Psi\right)^{*}TE}\partial_{\epsilon}\partial_{B}\Psi\right)\,d\mu_{B} & \text{(definition of \ensuremath{dL})}\\
 & =\int_{B}L_{,\sigma}^{\nabla\phi}\cdot_{\phi^{*}TS}\psi+L_{,\Phi}^{\nabla\phi}\cdot_{\phi^{*}TS\otimes T^{*}B}\nabla\psi\,d\mu_{B}, & \text{(supporting calculations)}
\end{align*}
as claimed. Supporting calculations follow. 

First, note that $\partial_{B}\Psi\circ z=\nabla\phi$, so
\begin{align*}
z^{*}\left(\partial_{B}\Psi\right)^{*}L_{,\sigma} & =\left(\partial_{B}\Psi\circ z\right)^{*}L_{,\sigma} & z^{*}\left(\partial_{B}\Psi\right)^{*}L_{,\beta} & =\left(\partial_{B}\Psi\circ z\right)^{*}L_{,\beta} & z^{*}\left(\partial_{B}\Psi\right)^{*}L_{,\Phi} & =\left(\partial_{B}\Psi\circ z\right)^{*}L_{,\Phi}\\
 & =\left(\nabla\phi\right)^{*}L_{,\sigma} &  & =\left(\nabla\phi\right)^{*}L_{,\beta} &  & =\left(\nabla\phi\right)^{*}L_{,\Phi}\\
 & =L_{,\sigma}^{\nabla\phi}, &  & =L_{,\beta}^{\nabla\phi}, &  & =L_{,\Phi}^{\nabla\phi}.
\end{align*}
Taking the integrand above,
\begin{align}
z^{*}\left(\left(\partial_{B}\Psi\right)^{*}dL\cdot_{\left(\partial_{B}\Psi\right)^{*}TE}\partial_{\epsilon}\partial_{B}\Psi\right) & =z^{*}\left(\left(\partial_{B}\Psi\right)^{*}\left(L_{,\sigma}\cdot_{\pi_{S}^{*}TS}\sigma+L_{,\beta}\cdot_{\pi_{B}^{*}TB}\beta+L_{,\Phi}\cdot_{\pi^{*}E}\Phi\right)\cdot_{\left(\partial_{B}\Psi\right)^{*}TE}\partial_{\epsilon}\partial_{B}\Psi\right)\label{eq:decomposed-dL}
\end{align}
by the partial covariant derivative decomposition of $dL$.

For the term involving $\sigma$ on the right hand side of \eqref{decomposed-dL},
\begin{align*}
\left(\partial_{B}\Psi\right)^{*}\sigma\cdot_{\left(\partial_{B}\Psi\right)^{*}TE}\partial_{\epsilon}\partial_{B}\Psi & =\left(\partial_{B}\Psi\right)^{*}\nabla^{E\to S}\pi_{S}\cdot_{\left(\partial_{B}\Psi\right)^{*}TE}\partial_{\epsilon}\partial_{B}\Psi & \text{(definition of \ensuremath{\sigma})}\\
 & =\partial_{\epsilon}\left(\pi_{S}\circ\partial_{B}\Psi\right) & \text{(definition of \ensuremath{\nabla^{E\to S}\pi_{S}})}\\
 & =\partial_{\epsilon}\Psi,
\end{align*}
and therefore
\begin{align*}
z^{*}\left(\left(\partial_{B}\Psi\right)^{*}\sigma\cdot_{\left(\partial_{B}\Psi\right)^{*}TE}\partial_{\epsilon}\partial_{B}\Psi\right) & =z^{*}\partial_{\epsilon}\Psi\\
 & =\delta_{\epsilon}\Psi & \text{(definition of \ensuremath{\delta_{\epsilon}})}\\
 & =\psi. & \text{(definition of \ensuremath{\Psi})}
\end{align*}
For the term involving $\beta$ on the right hand side of \eqref{decomposed-dL},
\begin{align*}
\left(\partial_{B}\Psi\right)^{*}\beta\cdot_{\left(\partial_{B}\Psi\right)^{*}TE}\partial_{\epsilon}\partial_{B}\Psi & =\left(\partial_{B}\Psi\right)^{*}\nabla^{E\to B}\pi_{B}\cdot_{\left(\partial_{B}\Psi\right)^{*}TE}\partial_{\epsilon}\partial_{B}\Psi & \text{(definition of \ensuremath{\beta})}\\
 & =\partial_{\epsilon}\left(\pi_{B}\circ\partial_{B}\Psi\right) & \text{(definition of \ensuremath{\nabla^{E\to B}\pi_{B}})}\\
 & =\partial_{\epsilon}\text{pr}_{B}^{B\times I}\\
 & =0 & \text{(\ensuremath{\text{pr}_{B}^{B\times I}} is indep. of \ensuremath{\epsilon}),}
\end{align*}
and therefore
\begin{align*}
z^{*}\left(\left(\partial_{B}\Psi\right)^{*}\beta\cdot_{\left(\partial_{B}\Psi\right)^{*}TE}\partial_{\epsilon}\partial_{B}\Psi\right) & =0.
\end{align*}
For the term involving $\Phi$ on the right hand side of \eqref{decomposed-dL},
let $\nabla_{\partial_{B}}^{\Psi^{*}TS}$ denote a covariant derivative
in the $B$ component only. Then 
\begin{align*}
z^{*}\left(\left(\partial_{B}\Psi\right)^{*}\Phi\cdot_{\left(\partial_{B}\Psi\right)^{*}TE}\partial_{\epsilon}\partial_{B}\Psi\right) & =z^{*}\left(\nabla_{\partial_{\epsilon}}^{\left(\pi\circ\partial_{B}\Psi\right)^{*}E}\partial_{B}\Psi\right) & \text{(definition of \ensuremath{\Phi})}\\
 & =z^{*}\left(\nabla_{\partial_{\epsilon}}^{\Psi^{*}TS\otimes T^{*}B}\partial_{B}\Psi\right) & \text{(tensor bundle isomorphism)}\\
 & =z^{*}\left(\nabla_{\partial_{B}}^{\Psi^{*}TS}\partial_{\epsilon}\Psi\right) & \text{(triv. bundle \ensuremath{S\times B\times I\to B\times I} is flat)}\\
 & =z^{*}\nabla^{\Psi^{*}TS}\partial_{\epsilon}\Psi\cdot_{z^{*}T\left(B\times I\right)}\nabla^{B\to B\times I}z & \text{(\ensuremath{\nabla^{B\to B\times I}z} embeds \ensuremath{TB} in \ensuremath{z^{*}T\left(B\times I\right)})}\\
 & =\nabla^{z^{*}\Psi^{*}TS}z^{*}\partial_{\epsilon}\Psi & \text{(definition of \ensuremath{\nabla^{z^{*}\Psi^{*}TS}})}\\
 & =\nabla^{\phi^{*}TS}\delta_{\epsilon}\Psi & \text{(definition of \ensuremath{\delta_{\epsilon}})}\\
 & =\nabla^{\phi^{*}TS}\psi & \text{(definition of \ensuremath{\Psi}).}
\end{align*}
Further detail on the equality $\nabla_{\partial_{\epsilon}}^{\Psi^{*}TS\otimes T^{*}B}\partial_{B}\Psi=\nabla_{\partial_{B}}^{\Psi^{*}TS}\partial_{\epsilon}\Psi$
can be found in \citet[pg. 36]{Dods2022}. Thus
\begin{align*}
z^{*}\left(\left(\partial_{B}\Psi\right)^{*}dL\cdot_{\left(\partial_{B}\Psi\right)^{*}TE}\partial_{\epsilon}\partial_{B}\Psi\right) & =L_{,\sigma}^{\nabla\phi}\cdot_{\phi^{*}TS}\psi+L_{,\beta}^{\nabla\phi}\cdot_{TB}0+L_{,\Phi}^{\nabla\phi}\cdot_{\phi^{*}TS\otimes T^{*}B}\nabla\psi\\
 & =L_{,\sigma}^{\nabla\phi}\cdot_{\phi^{*}TS}\psi+L_{,\Phi}^{\nabla\phi}\cdot_{\phi^{*}TS\otimes T^{*}B}\nabla\psi
\end{align*}
This completes the supporting calculations, and therefore the proof.
\end{proof}
\begin{proof}[Proof of \propref{first-variation-bulk-boundary-form} - First variation
of general $\mathcal{L}$ in bulk + boundary form]
Let $\iota\colon\partial B\to B$ denote the inclusion map, and let
$\nu\in\Gamma\left(\iota^{*}T^{*}B\right)$ be the outward unit conormal
field on $\partial B$. Using the same notation as in the previous
proof, the equality to be proven is
\begin{align*}
D\mathcal{L}\left(\phi\right)\cdot_{\Gamma\left(\phi^{*}TS\right)}\psi & =\int_{B}\left(L_{,\sigma}^{\nabla\phi}-\text{div}L_{,\Phi}^{\nabla\phi}\right)\cdot_{\phi^{*}TS}\psi\,d\mu_{B}+\int_{\partial B}\left(\iota^{*}L_{,\Phi}^{\nabla\phi}\cdot_{\iota^{*}T^{*}B}\nu\right)\cdot_{\iota^{*}\phi^{*}TS}\iota^{*}\psi\,d\mu_{\partial B}.
\end{align*}
Recall that $L_{,\Phi}^{\nabla\phi}\in\Gamma\left(\phi^{*}T^{*}S\otimes TB\right)$,
so
\begin{align*}
\nabla^{\phi^{*}T^{*}S\otimes TB}L_{,\phi}^{\nabla\phi} & \in\Gamma\left(\phi^{*}T^{*}S\otimes TB\otimes T^{*}B\right),
\end{align*}
and therefore
\begin{align*}
\text{div}L_{,\phi}^{\nabla\phi}=\text{tr}\nabla^{\phi^{*}T^{*}S\otimes TB}L_{,\phi}^{\nabla\phi} & \in\Gamma\left(\phi^{*}T^{*}S\right).
\end{align*}
Calculations ensue.
\begin{align*}
D\mathcal{L}\left(\phi\right)\cdot\psi & =\int_{B}L_{,\sigma}^{\nabla\phi}\cdot_{\phi^{*}TS}\psi+L_{,\Phi}^{\nabla\phi}\cdot_{\phi^{*}TS\otimes T^{*}B}\nabla\psi\,d\mu_{B} & \text{(by \propref{first-variation-weak-form})}\\
 & =\int_{B}L_{,\sigma}^{\nabla\phi}\cdot_{\phi^{*}TS}\psi+\text{div}\left(\psi\cdot_{\phi^{*}T^{*}S}L_{,\Phi}^{\nabla\phi}\right)-\left(\text{div}L_{,\Phi}^{\nabla\phi}\right)\cdot_{\phi^{*}TS}\psi\,d\mu_{B} & \text{(supporting calculations)}\\
 & =\int_{B}L_{,\sigma}^{\nabla\phi}\cdot_{\phi^{*}TS}\psi-\left(\text{div}L_{,\Phi}^{\nabla\phi}\right)\cdot_{\phi^{*}TS}\psi\,d\mu_{B}+\int_{\partial B}\iota^{*}\left(\psi\cdot_{\phi^{*}T^{*}S}L_{,\Phi}^{\nabla\phi}\right)\cdot\nu\,d\mu_{\partial B} & \text{(divergence theorem)}\\
 & =\int_{B}\left(L_{,\sigma}^{\nabla\phi}-\text{div}L_{,\Phi}^{\nabla\phi}\right)\cdot_{\phi^{*}TS}\psi\,d\mu_{B}+\int_{\partial B}\left(\iota^{*}L_{,\Phi}^{\nabla\phi}\cdot_{\iota^{*}T^{*}B}\nu\right)\cdot_{\iota^{*}\phi^{*}TS}\iota^{*}\psi\,d\mu_{\partial B},
\end{align*}
as claimed. Supporting calculations follow. Rearrangement of tensor
contractions and use of the product rule gives 
\begin{align*}
L_{,\Phi}^{\nabla\phi}\cdot_{\phi^{*}TS\otimes T^{*}B}\nabla\psi & =\text{tr}\nabla^{TB}\left(\psi\cdot_{\phi^{*}T^{*}S}L_{,\Phi}^{\nabla\phi}\right)-\psi\cdot_{\phi^{*}T^{*}S}\text{tr}\nabla^{\phi^{*}T^{*}S\otimes TB}L_{,\Phi}^{\nabla\phi}\\
 & =\text{div}\left(\psi\cdot_{\phi^{*}T^{*}S}L_{,\Phi}^{\nabla\phi}\right)-\left(\text{div}L_{,\Phi}^{\nabla\phi}\right)\cdot_{\phi^{*}TS}\psi.
\end{align*}
This concludes the proof.
\end{proof}
\begin{proof}[Proof of \propref{second-variation-weak-form} - Second variation
of general $\mathcal{L}$ in weak form]
Let $\phi$ satisfy $D\mathcal{L}\left(\phi\right)=0$. Let $\psi\in\Gamma\left(\phi^{*}TS\right)$.
Let $I,J\subset\mathbb{R}$ each be intervals containing $0$, and
let $\epsilon$ and $\eta$ be the standard coordinates on $I$ and
$J$ respectively. Let
\begin{align*}
z\colon B & \to B\times I\times J,\\
b & \mapsto\left(b,0,0\right),
\end{align*}
denoting \textquotedbl evaluation at $\epsilon=\eta=0$\textquotedbl .
Let
\begin{align*}
\Psi\colon B\times I\times J & \to S
\end{align*}
be a two-parameter variation of $\phi$ such that $z^{*}\partial_{\epsilon}\Psi=\psi$
and that $z^{*}\partial_{\eta}\Psi=\psi$, which in particular means
that $z^{*}\Psi=\phi$. For example, $\Psi\left(b,\epsilon,\eta\right):=\exp\left(\epsilon\psi\left(b\right)+\eta\psi\left(b\right)\right)$.

Let $p\colon B\times I\times J\to B,\,\left(b,\epsilon,\eta\right)\mapsto b$.
As in the proof of \propref{first-variation-weak-form}, let $\partial_{B}$
denote differentiation along the $B$ component, rendering a direction
tensor factor of type $p^{*}T^{*}B$. This can occur in a nonlinear
covariant derivative (i.e. tangent map as tensor field) as in $\partial_{B}\Psi\in\Gamma\left(\Psi^{*}TS\otimes p^{*}T^{*}B\right)$
or in a linear covariant derivative, for example, $\nabla_{\partial_{B}}\partial_{\epsilon}\Psi\in\Gamma\left(\Psi^{*}TS\otimes p^{*}T^{*}B\right)$.
Note that evaluation of such a field at $\epsilon=\eta=0$ causes
the types to collapse accordingly, for example, $z^{*}\partial_{B}\Psi=\nabla\phi\in\Gamma\left(\phi^{*}TS\otimes T^{*}B\right)$
and $z^{*}\nabla_{\partial_{B}}\partial_{\epsilon}\Psi=\nabla\psi\in\Gamma\left(\phi^{*}TS\otimes T^{*}B\right)$.

First, compute the part of integrand of the second variation $\nabla^{2}\mathcal{L}\left(\phi\right):\left(\psi\otimes\psi\right)=\int z^{*}\partial_{\epsilon}\partial_{\eta}\left(L\circ\partial_{B}\Psi\right)\,d\mu_{B}$
before evaluation at $\epsilon=\eta=0$. In particular, using calculations
from the proof of \propref{first-variation-weak-form},
\begin{align*}
\partial_{\epsilon}\partial_{\eta}\left(L\circ\partial_{B}\Psi\right) & =\partial_{\epsilon}\left(\left(\partial_{B}\Psi\right)^{*}L_{,\sigma}\cdot\partial_{\eta}\Psi+\left(\partial_{B}\Psi\right)^{*}L_{,\Phi}:\nabla_{\partial_{B}}^{\Psi^{*}TS}\partial_{\eta}\Psi\right)\\
 & =\left(\nabla_{\partial_{\epsilon}}^{\left(\partial_{B}\Psi\right)^{*}\pi_{S}^{*}T^{*}S}\left(\partial_{B}\Psi\right)^{*}L_{,\sigma}\right)\cdot\partial_{\eta}\Psi+\left(\partial_{B}\Psi\right)^{*}L_{,\sigma}\cdot\nabla_{\partial_{\epsilon}}^{\Psi^{*}TS}\partial_{\eta}\Psi\\
 & +\left(\nabla_{\partial_{\epsilon}}^{\left(\partial_{B}\Psi\right)^{*}\pi^{*}E}\left(\partial_{B}\Psi\right)^{*}L_{,\Phi}\right):\nabla_{\partial_{B}}^{\Psi^{*}TS}\partial_{\eta}\Psi+\left(\partial_{B}\Psi\right)^{*}L_{,\Phi}:\nabla_{\partial_{\epsilon}}^{\Psi^{*}TS\otimes p^{*}T^{*}B}\nabla_{\partial_{B}}^{\Psi^{*}TS}\partial_{\eta}\Psi & \text{(product rule)}\\
 & =\left[\begin{array}{cc}
\partial_{\eta}\Psi\cdot & \nabla_{\partial_{B}}^{\Psi^{*}TS}\partial_{\eta}\Psi:\end{array}\right]\left[\begin{array}{cc}
L_{,\sigma\sigma}^{\partial_{B}\Psi} & L_{,\sigma\Phi}^{\partial_{B}\Psi}\\
L_{,\Phi\sigma}^{\partial_{B}\Psi} & L_{,\Phi\Phi}^{\partial_{B}\Psi}
\end{array}\right]\left[\begin{array}{c}
\cdot\partial_{\epsilon}\Psi\\
:\nabla_{\partial_{B}}^{\Psi^{*}TS}\partial_{\epsilon}\Psi
\end{array}\right] & \text{(supp. calc. 1)}\\
 & +\left(\partial_{B}\Psi\right)^{*}L_{,\sigma}\cdot\nabla_{\partial_{\epsilon}}^{\Psi^{*}TS}\partial_{\eta}\Psi+\left(\partial_{B}\Psi\right)^{*}L_{,\Phi}:\nabla_{\partial_{B}}^{\Psi^{*}TS}\nabla_{\partial_{\epsilon}}^{\Psi^{*}TS}\partial_{\eta}\Psi\\
 & +\left(\partial_{B}\Psi\right)^{*}L_{,\Phi}:\left(\Psi^{*}R^{TS}\vdots\left(\partial_{\eta}\Psi\otimes\partial_{\epsilon}\Psi\otimes\partial_{B}\Psi\right)\right) & \text{(supp. calc. 4).}
\end{align*}
Let $\Theta:=\nabla_{\partial_{\epsilon}}^{\Psi^{*}TS}\partial_{\eta}\Psi\in\Gamma\left(\Psi^{*}TS\right)$
and let $\theta=z^{*}\Theta\in\Gamma\left(z^{*}\Psi^{*}TS\right)\cong\Gamma\left(\phi^{*}TS\right)$.
Note that $\theta$ is a variation of $\phi$. Thus
\begin{align*}
 & \int z^{*}\left(\left(\partial_{B}\Psi\right)^{*}L_{,\sigma}\cdot\nabla_{\partial_{\epsilon}}^{\Psi^{*}TS}\partial_{\eta}\Psi+\left(\partial_{B}\Psi\right)^{*}L_{,\Phi}:\nabla_{\partial_{B}}^{\Psi^{*}TS}\nabla_{\partial_{\epsilon}}^{\Psi^{*}TS}\partial_{\eta}\Psi\right)\,d\mu_{B}\\
 & =\int L_{,\sigma}^{\nabla\phi}\cdot\theta+L_{,\Phi}^{\nabla\phi}:\nabla\theta\,d\mu_{B}\\
 & =D\mathcal{L}\left(\phi\right)\cdot\theta\\
 & =0,
\end{align*}
where the last equality holds because $D\mathcal{L}\left(\phi\right)=0$
by assumption. Thus, evaluating $\partial_{\epsilon}\partial_{\eta}\left(L\circ\partial_{B}\Psi\right)$
at $\epsilon=\eta=0$ and integrating,
\begin{align*}
\nabla^{2}\mathcal{L}\left(\phi\right):\left(\psi\otimes\psi\right) & =\int_{B}z^{*}\partial_{\epsilon}\partial_{\eta}\left(L\circ\partial_{B}\Psi\right)\,d\mu_{B}\\
 & =\int_{B}\left[\begin{array}{cc}
\psi\cdot & \nabla\psi:\end{array}\right]\left[\begin{array}{cc}
L_{,\sigma\sigma}^{\nabla\phi} & L_{,\sigma\Phi}^{\nabla\phi}\\
L_{,\Phi\sigma}^{\nabla\phi} & L_{,\Phi\Phi}^{\nabla\phi}
\end{array}\right]\left[\begin{array}{c}
\cdot\psi\\
:\nabla\psi
\end{array}\right]+L_{,\Phi}^{\nabla\phi}:\left(\phi^{*}R^{TS}\vdots\left(\psi\otimes\psi\otimes\nabla\phi\right)\right)\,d\mu_{B},
\end{align*}
as claimed.

Supporting calculation 1:
\begin{align*}
 & \left(\nabla_{\partial_{\epsilon}}^{\left(\partial_{B}\Psi\right)^{*}\pi_{S}^{*}T^{*}S}\left(\partial_{B}\Psi\right)^{*}L_{,\sigma}\right)\cdot\partial_{\eta}\Psi+\left(\nabla_{\partial_{\epsilon}}^{\left(\partial_{B}\Psi\right)^{*}\pi^{*}E}\left(\partial_{B}\Psi\right)^{*}L_{,\Phi}\right):\nabla_{\partial_{B}}^{\Psi^{*}TS}\partial_{\eta}\Psi\\
 & =\left(\left(\partial_{B}\Psi\right)^{*}L_{,\sigma\sigma}\cdot_{\Psi^{*}TS}\partial_{\epsilon}\Psi+\left(\partial_{B}\Psi\right)^{*}L_{,\sigma\Phi}\cdot_{\Psi^{*}TS\otimes T^{*}B}\nabla_{\partial_{B}}^{\Psi^{*}TS}\partial_{\epsilon}\Psi\right)\cdot\partial_{\eta}\Psi & \text{(supp. calc. 2)}\\
 & +\left(\left(\partial_{B}\Psi\right)^{*}L_{,\Phi\sigma}\cdot_{\Psi^{*}TS}\partial_{\epsilon}\Psi+\left(\partial_{B}\Psi\right)^{*}L_{,\Phi\Phi}\cdot_{\Psi^{*}TS\otimes T^{*}B}\nabla_{\partial_{B}}^{\Psi^{*}TS}\partial_{\epsilon}\Psi\right):\nabla_{\partial_{B}}^{\Psi^{*}TS}\partial_{\eta}\Psi & \text{(supp. calc. 3)}\\
 & =\left[\begin{array}{cc}
\partial_{\eta}\Psi\cdot & \nabla_{\partial_{B}}^{\Psi^{*}TS}\partial_{\eta}\Psi:\end{array}\right]\left[\begin{array}{cc}
L_{,\sigma\sigma}^{\partial_{B}\Psi} & L_{,\sigma\Phi}^{\partial_{B}\Psi}\\
L_{,\Phi\sigma}^{\partial_{B}\Psi} & L_{,\Phi\Phi}^{\partial_{B}\Psi}
\end{array}\right]\left[\begin{array}{c}
\cdot\partial_{\epsilon}\Psi\\
:\nabla_{\partial_{B}}^{\Psi^{*}TS}\partial_{\epsilon}\Psi
\end{array}\right].
\end{align*}

Supporting calculation 2:
\begin{align*}
\nabla_{\partial_{\epsilon}}^{\left(\partial_{B}\Psi\right)^{*}\pi_{S}^{*}T^{*}S}\left(\partial_{B}\Psi\right)^{*}L_{,\sigma} & =\left(\partial_{B}\Psi\right)^{*}\nabla^{\pi_{S}^{*}T^{*}S}L_{,\sigma}\cdot_{\left(\partial_{B}E\right)^{*}TE}\nabla_{\partial_{\epsilon}}^{B\times I\times J\to E}\partial_{B}\Psi\\
 & =\left(\partial_{B}\Psi\right)^{*}\left(L_{,\sigma\sigma}\cdot_{\pi_{S}^{*}TS}\sigma+L_{,\beta}\cdot_{\pi_{B}^{*}TB}\beta+L_{,\Phi}\cdot_{\pi^{*}E}\right)\cdot_{\left(\partial_{B}E\right)^{*}TE}\nabla_{\partial_{\epsilon}}^{B\times I\times J\to E}\partial_{B}\Psi\\
 & =\left(\partial_{B}\Psi\right)^{*}L_{,\sigma\sigma}\cdot_{\Psi^{*}TS}\partial_{\epsilon}\Psi+\left(\partial_{B}\Psi\right)^{*}L_{,\sigma\Phi}\cdot_{\Psi^{*}TS\otimes T^{*}B}\nabla_{\partial_{B}}^{\Psi^{*}TS}\partial_{\epsilon}\Psi.
\end{align*}

Supporting calculation 3:
\begin{align*}
\nabla_{\partial_{\epsilon}}^{\left(\partial_{B}\Psi\right)^{*}\pi^{*}E}\left(\partial_{B}\Psi\right)^{*}L_{,\Phi} & =\left(\partial_{B}\Psi\right)^{*}\nabla^{\pi^{*}E}L_{,\Phi}\cdot_{\left(\partial_{B}E\right)^{*}TE}\nabla_{\partial_{\epsilon}}^{B\times I\times J\to E}\partial_{B}\Psi\\
 & =\left(\partial_{B}\Psi\right)^{*}\left(L_{,\Phi\sigma}\cdot_{\pi_{S}^{*}TS}\sigma+L_{,\Phi\beta}\cdot_{\pi_{B}^{*}TB}\beta+L_{,\Phi\Phi}\cdot_{\pi^{*}E}\Phi\right)\cdot_{\left(\partial_{B}E\right)^{*}TE}\nabla_{\partial_{\epsilon}}^{B\times I\times J\to E}\partial_{B}\Psi\\
 & =\left(\partial_{B}\Psi\right)^{*}L_{,\Phi\sigma}\cdot_{\Psi^{*}TS}\partial_{\epsilon}\Psi+\left(\partial_{B}\Psi\right)^{*}L_{,\Phi\Phi}\cdot_{\Psi^{*}TS\otimes T^{*}B}\nabla_{\partial_{B}}^{\Psi^{*}TS}\partial_{\epsilon}\Psi
\end{align*}

Supporting calculation 4: Note that $\partial_{\epsilon}$ is independent
of \textquotedbl$\partial_{B}$\textquotedbl{} (which represents
a vector field \textquotedbl along $B$\textquotedbl{} independent
of $\epsilon$ and $\eta$), so $\left[\partial_{\epsilon},\partial_{B}\right]=0$,
\begin{align*}
 & \nabla_{\partial_{\epsilon}}^{\Psi^{*}TS\otimes p^{*}T^{*}B}\nabla_{\partial_{B}}^{\Psi^{*}TS}\partial_{\eta}\Psi\\
 & =\nabla_{\partial_{\epsilon}}^{\Psi^{*}TS\otimes p^{*}T^{*}B}\nabla_{\partial_{B}}^{\Psi^{*}TS}\partial_{\eta}\Psi-\nabla_{\partial_{B}}^{\Psi^{*}TS}\nabla_{\partial_{\epsilon}}^{\Psi^{*}TS}\partial_{\eta}\Psi+\nabla_{\partial_{B}}^{\Psi^{*}TS}\nabla_{\partial_{\epsilon}}^{\Psi^{*}TS}\partial_{\eta}\Psi\\
 & =\nabla_{\partial_{\epsilon}}^{\Psi^{*}TS\otimes p^{*}T^{*}B}\nabla_{\partial_{B}}^{\Psi^{*}TS}\partial_{\eta}\Psi-\nabla_{\partial_{B}}^{\Psi^{*}TS}\nabla_{\partial_{\epsilon}}^{\Psi^{*}TS}\partial_{\eta}\Psi-\nabla_{\left[\partial_{\epsilon},\partial_{B}\right]}^{\Psi^{*}TS}\partial_{\eta}\Psi+\nabla_{\partial_{B}}^{\Psi^{*}TS}\nabla_{\partial_{\epsilon}}^{\Psi^{*}TS}\partial_{\eta}\Psi\\
 & =R_{\text{op}}^{\Psi^{*}TS}\left(\partial_{\epsilon},\partial_{B}\right)\partial_{\eta}\Psi+\nabla_{\partial_{B}}^{\Psi^{*}TS}\nabla_{\partial_{\epsilon}}^{\Psi^{*}TS}\partial_{\eta}\Psi\\
 & =\Psi^{*}R^{TS}\vdots\left(\partial_{\eta}\Psi\otimes\partial_{\epsilon}\Psi\otimes\partial_{B}\Psi\right)+\nabla_{\partial_{B}}^{\Psi^{*}TS}\nabla_{\partial_{\epsilon}}^{\Psi^{*}TS}\partial_{\eta}\Psi & \text{(supp. calc. 5)}\\
\implies & \left(\partial_{B}\Psi\right)^{*}L_{,\Phi}:\nabla_{\partial_{\epsilon}}^{\Psi^{*}TS\otimes p^{*}T^{*}B}\nabla_{\partial_{B}}^{\Psi^{*}TS}\partial_{\eta}\Psi\\
 & =\left(\partial_{B}\Psi\right)^{*}L_{,\Phi}:\nabla_{\partial_{B}}^{\Psi^{*}TS}\nabla_{\partial_{\epsilon}}^{\Psi^{*}TS}\partial_{\eta}\Psi+\left(\partial_{B}\Psi\right)^{*}L_{,\Phi}:\Psi^{*}R^{TS}\vdots\left(\partial_{\eta}\Psi\otimes\partial_{\epsilon}\Psi\otimes\partial_{B}\Psi\right).
\end{align*}

Supporting calculation 5: Note that
\begin{align*}
R^{\Psi^{*}TS}=\Psi^{*}R^{TS}:\left(\nabla^{B\times I\times J\to S}\Psi\boxtimes\nabla^{B\times I\times J\to S}\Psi\right) & \in\Gamma\left(\Psi^{*}TS\otimes\Psi^{*}T^{*}S\otimes T^{*}\left(B\times I\times J\right)\otimes T^{*}\left(B\times I\times J\right)\right),
\end{align*}
where if $A$ and $B$ are each 2-tensor {[}fields{]}, then $A\boxtimes B$
is a 4-tensor {[}field{]} defined by $\left(A\boxtimes B\right)^{ijk\ell}:=A^{ik}B^{j\ell}$
(see \citet[pg. 38]{Dods2022}). Embedding $X\in\Gamma\left(TB\right)$
into $\Gamma\left(T\left(B\times I\times J\right)\right)\cong\Gamma\left(TB\oplus TI\oplus TJ\right)$
as $Y\left(b,\epsilon,\eta\right):=X\left(b\right)\oplus0\oplus0$,
it follows that $\left[\partial_{\epsilon},Y\right]=0$, and
\begin{align*}
R_{\text{op}}^{\Psi^{*}TS}\left(\partial_{\epsilon},Y\right)\partial_{\eta}\Psi & =R^{\Psi^{*}TS}\vdots\left(\partial_{\eta}\Psi\otimes\partial_{\epsilon}\otimes Y\right)\\
 & =\left(\Psi^{*}R^{TS}:\left(\nabla^{B\times I\times J\to S}\Psi\boxtimes\nabla^{B\times I\times J\to S}\Psi\right)\right)\vdots\left(\partial_{\eta}\Psi\otimes\partial_{\epsilon}\otimes Y\right)\\
 & =\left(\Psi^{*}R^{TS}:\left(\partial_{\epsilon}\Psi\otimes\nabla_{Y}\Psi\right)\right)\cdot\partial_{\eta}\Psi\\
 & =\Psi^{*}R^{TS}\vdots\left(\partial_{\eta}\Psi\otimes\partial_{\epsilon}\Psi\otimes\nabla_{Y}\Psi\right)\\
 & =\Psi^{*}R^{TS}\vdots\left(\partial_{\eta}\Psi\otimes\partial_{\epsilon}\Psi\otimes\partial_{B}\Psi\right)\cdot Y\\
\implies R_{\text{op}}^{\Psi^{*}TS}\left(\partial_{\epsilon},\partial_{B}\right)\partial_{\eta}\Psi & =\Psi^{*}R^{TS}\vdots\left(\partial_{\eta}\Psi\otimes\partial_{\epsilon}\Psi\otimes\partial_{B}\Psi\right)
\end{align*}
This concludes the supporting calculations, and therefore the proof.
\end{proof}

\subsection{Proofs for \Subsecref{Hyperelastic-Mechanics} - Hyperelastic Mechanics\protect\label{subsec:Proofs-for-Hyperelastic-Mechanics}}
\begin{proof}[Proof of \corref{mechanical-first-variation-in-weak-form} - First
variation of specific $\mathcal{L}$ in weak form]
Using the typed subscript notation for natural pairings on tensor
factors, the equality to prove is
\begin{align*}
D\mathcal{L}\left(\phi\right)\cdot\psi & =\int_{B}\left(W_{,\sigma}^{\nabla\phi}+\rho\phi^{*}dU\right)\cdot_{\phi^{*}TS}\psi+W_{,\Phi}^{\nabla\phi}\cdot_{\phi^{*}TS\otimes T^{*}B}\nabla\psi\,d\mu_{B},
\end{align*}
where
\begin{align*}
L & :=W+V\circ\pi.
\end{align*}
This follows easily from \propref{first-variation-weak-form} after
some straightforward calculations to determine $L_{,\sigma}$ and
$L_{,\Phi}$. It will help to express $L$ in a form more conducive
to the tensor calculus formalism discussed in \subsecref{Riemannian-Calculus-of-Variations}.
\begin{align*}
L & =W+V\circ\pi\\
 & =W+\pi^{*}V\\
 & =W+\left(\pi_{S}^{*}U\right)\left(\pi_{B}^{*}\rho\right).
\end{align*}
Starting with the partial covariant derivative decomposition of $L$,
\begin{align*}
L_{,\sigma}\cdot\sigma+L_{,\beta}\cdot\beta+L_{,\Phi}:\Phi & =dL\\
 & =dW+d\left(\left(\pi_{S}^{*}U\right)\left(\pi_{B}^{*}\rho\right)\right)\\
 & =dW+\left(\pi_{B}^{*}\rho\right)\left(\pi_{S}^{*}dU\cdot\nabla^{E\to S}\pi_{S}\right)+\left(\pi_{S}^{*}U\right)\left(\pi_{B}^{*}d\rho\cdot\nabla^{E\to B}\pi_{B}\right)\\
 & =W_{,\sigma}\cdot\sigma+W_{,\beta}\cdot\beta+W_{,\Phi}:\Phi+\left(\pi_{B}^{*}\rho\right)\left(\pi_{S}^{*}dU\cdot\sigma\right)+\left(\pi_{S}^{*}U\right)\left(\pi_{B}^{*}d\rho\cdot\beta\right)\\
 & =\left(W_{,\sigma}+\left(\pi_{B}^{*}\rho\right)\left(\pi_{S}^{*}dU\right)\right)\cdot\sigma+\left(W_{,\beta}+\left(\pi_{S}^{*}U\right)\left(\pi_{B}^{*}d\rho\right)\right)\cdot\beta+W_{,\Phi}:\Phi.
\end{align*}
Because $\sigma\oplus\beta\oplus\Phi$ is a vector bundle isomorphism,
this implies that
\begin{align*}
L_{,\sigma} & =W_{,\sigma}+\left(\pi_{B}^{*}\rho\right)\left(\pi_{S}^{*}dU\right), & L_{,\beta} & =W_{,\beta}+\left(\pi_{S}^{*}U\right)\left(\pi_{B}^{*}d\rho\right), & L_{,\Phi} & =W_{,\Phi},
\end{align*}
and therefore
\begin{align*}
L_{,\sigma}^{\nabla\phi} & =W_{,\sigma}^{\nabla\phi}+\left(\nabla\phi\right)^{*}\left(\left(\pi_{B}^{*}\rho\right)\left(\pi_{S}^{*}dU\right)\right) & \text{(definition of \ensuremath{\cdot^{\nabla\phi}} superscript)}\\
 & =W_{,\sigma}^{\nabla\phi}+\left(\left(\nabla\phi\right)^{*}\pi_{B}^{*}\rho\right)\left(\left(\nabla\phi\right)^{*}\pi_{S}^{*}dU\right) & \text{(distribute pullback)}\\
 & =W_{,\sigma}^{\nabla\phi}+\left(\left(\pi_{B}\circ\nabla\phi\right)^{*}\rho\right)\left(\left(\pi_{S}\circ\nabla\phi\right)^{*}dU\right) & \text{(pullback type isomorphism)}\\
 & =W_{,\sigma}^{\nabla\phi}+\rho\phi^{*}dU & \text{(\ensuremath{\pi_{B}\circ\nabla\phi=\text{Id}_{B},} \ensuremath{\pi_{S}\circ\nabla\phi=\phi}),}\\
L_{,\Phi}^{\nabla\phi} & =W_{,\Phi}^{\nabla\phi}.
\end{align*}
Therefore, by \propref{first-variation-weak-form},
\begin{align*}
D\mathcal{L}\left(\phi\right)\cdot_{\Gamma\left(\phi^{*}TS\right)}\psi & =\int_{B}L_{,\sigma}^{\nabla\phi}\cdot_{\phi^{*}TS}\psi+L_{,\Phi}^{\nabla\phi}\cdot_{\phi^{*}TS\otimes T^{*}B}\nabla\psi\,d\mu_{B}\\
 & =\int_{B}\left(W_{,\sigma}^{\nabla\phi}+\rho\phi^{*}dU\right)\cdot_{\phi^{*}TS}\psi+W_{,\Phi}^{\nabla\phi}\cdot_{\phi^{*}TS\otimes T^{*}B}\nabla\psi\,d\mu_{B},
\end{align*}
as claimed.
\end{proof}
\begin{proof}[Proof of \corref{mechanical-first-variation-in-bulk-boundary-form}
- First variation of specific $\mathcal{L}$ in bulk + boundary form]
This follows immediately, having already computed $L_{,\sigma}^{\nabla\phi}$
and $L_{,\Phi}^{\nabla\phi}$ in the previous proof.
\end{proof}
\begin{proof}[Proof of \corref{mechanical-second-variation-in-weak-form} - Second
variation of specific $\mathcal{L}$ in weak form]
This involves a straightforward set of calculations. Recall that
$L_{,\sigma}=W_{,\sigma}+\left(\pi_{B}^{*}\rho\right)\left(\pi_{S}^{*}dU\right).$
Thus 
\begin{align*}
L_{,\sigma\sigma}\cdot\sigma+L_{,\sigma\beta}\cdot\beta+L_{,\sigma\Phi}:\Phi & =\nabla L_{,\sigma}\\
 & =\nabla W_{,\sigma}+\nabla\left(\left(\pi_{B}^{*}\rho\right)\left(\pi_{S}^{*}dU\right)\right)\\
 & =\nabla W_{,\sigma}+\left(\pi_{S}^{*}dU\right)\nabla\left(\pi_{B}^{*}\rho\right)+\left(\pi_{B}^{*}\rho\right)\nabla\left(\pi_{S}^{*}dU\right)\\
 & =\nabla W_{,\sigma}+\left(\pi_{S}^{*}dU\right)\pi_{B}^{*}d\rho\cdot\nabla\pi_{B}+\left(\pi_{B}^{*}\rho\right)\pi_{S}^{*}\nabla^{2}U\cdot\nabla\pi_{S}\\
 & =W_{,\sigma\sigma}\cdot\sigma+W_{,\sigma\beta}\cdot\beta+W_{,\sigma\Phi}:\Phi+\left(\pi_{S}^{*}dU\right)\pi_{B}^{*}d\rho\cdot\beta+\left(\pi_{B}^{*}\rho\right)\pi_{S}^{*}\nabla^{2}U\cdot\sigma\\
 & =\left(W_{,\sigma\sigma}+\left(\pi_{B}^{*}\rho\right)\pi_{S}^{*}\nabla^{2}U\right)\cdot\sigma+\left(W_{,\sigma\beta}+\left(\pi_{S}^{*}dU\right)\pi_{B}^{*}d\rho\right)\cdot\beta+W_{,\sigma\Phi}:\Phi,
\end{align*}
implying that
\begin{align*}
L_{,\sigma\sigma} & =W_{,\sigma\sigma}+\left(\pi_{B}^{*}\rho\right)\pi_{S}^{*}\nabla^{2}U,\\
L_{,\sigma\Phi} & =W_{,\sigma\Phi}.
\end{align*}
By the symmetry of $\nabla^{2}L$, it follows that $L_{,\Phi\sigma}=W_{,\Phi\sigma}$.
Finally, recalling that $L_{,\Phi}=W_{,\Phi}$, it follows that
\begin{align*}
L_{,\Phi\Phi} & =W_{,\Phi\Phi}.
\end{align*}
Taking the pullback of these fields by $\nabla\phi$,
\begin{align*}
L_{,\sigma\sigma}^{\nabla\phi} & =W_{,\sigma\sigma}^{\nabla\phi}+\left(\left(\nabla\phi\right)^{*}\pi_{B}^{*}\rho\right)\left(\left(\nabla\phi\right)^{*}\pi_{S}^{*}\nabla^{2}U\right)\\
 & =W_{,\sigma\sigma}^{\nabla\phi}+\rho\phi^{*}\nabla^{2}U,\\
L_{,\sigma\Phi}^{\nabla\phi} & =W_{,\sigma\Phi}^{\nabla\phi},\\
L_{,\Phi\sigma}^{\nabla\phi} & =W_{,\Phi\sigma}^{\nabla\phi},\\
L_{,\Phi\Phi}^{\nabla\phi} & =W_{,\Phi\Phi}^{\nabla\phi}.
\end{align*}
Therefore the second variation of $\mathcal{L}$ is
\begin{align*}
\nabla^{2}\mathcal{L}\left(\phi\right):\left(\psi\otimes\psi\right) & =\int_{B}\left[\begin{array}{cc}
\psi\cdot & \nabla\psi:\end{array}\right]\left[\begin{array}{cc}
W_{,\sigma\sigma}^{\nabla\phi}+\rho\phi^{*}\nabla^{2}U & W_{,\sigma\Phi}^{\nabla\phi}\\
W_{,\Phi\sigma}^{\nabla\phi} & W_{,\Phi\Phi}^{\nabla\phi}
\end{array}\right]\left[\begin{array}{c}
\cdot\psi\\
:\nabla\psi
\end{array}\right]+W_{,\Phi}^{\nabla\phi}:\left(\phi^{*}R^{TS}\vdots\left(\psi\otimes\psi\otimes\nabla\phi\right)\right)\,d\mu_{B},
\end{align*}
as claimed.
\end{proof}
\begin{proof}[Proof of \propref{W-is-spatially-invariant-and-homogeneous} - Spatial
invariance and homogeneity of uni-constant compressible Neo-Hooke
material]
Let
\begin{align*}
R & \in\Gamma\left(\pi^{*}E\right)\cong\Gamma\left(\pi_{S}^{*}TS\otimes\pi_{B}^{*}T^{*}B\right)
\end{align*}
be the canonical radial vector field on $\pi^{*}E$ defined by $R\left(F\right):=\left(F,F\right)$.
By construction, $R$ takes values in {[}the space isomorphic to{]}
the vertical subbundle of $TE$, so it follows that the $\sigma$
and $\beta$ partial covariant derivatives (both measuring horizontal
changes) are zero, and because $R$ is the identity map on fibers,
its $\Phi$ partial covariant derivative is the identity. Concretely,
\begin{align*}
R_{,\sigma} & \equiv0\in\Gamma\left(\pi_{S}^{*}TS\otimes\pi_{S}^{*}T^{*}S\right), & R_{,\beta} & \equiv0\in\Gamma\left(\pi_{B}^{*}TB\otimes\pi_{B}T^{*}B\right), & R_{,\Phi} & =\mathbb{I}_{\pi^{*}E}\in\Gamma\left(\pi^{*}E\otimes\pi^{*}E^{*}\right).
\end{align*}
$R$ can be used to define $C$ as a kind of endomorphism field over
$E$. In particular,
\begin{align*}
C & =\pi_{B}^{*}g^{-1}\cdot_{\pi_{B}^{*}T^{*}B}R^{\left(1\,2\right)}\cdot_{\pi_{S}^{*}T^{*}S}\pi_{S}^{*}h\cdot_{\pi_{S}^{*}TS}R\in\Gamma\left(\pi_{B}^{*}\left(TB\otimes T^{*}B\right)\right),
\end{align*}
noting that $R^{\left(1\,2\right)}\in\Gamma\left(\pi_{B}^{*}T^{*}B\otimes\pi_{S}TS\right)$
is the tensor transpose of $R$. Let $X\in\Gamma\left(TE\right)$
such that $\Phi\cdot X=0$. Then
\begin{align*}
\nabla_{X}R & =\nabla R\cdot X\\
 & =\left(R_{,\sigma}\cdot\sigma+R_{,\beta}\cdot\beta+R_{,\Phi}\cdot\Phi\right)\cdot X\\
 & =0\cdot\sigma+0\cdot\beta+R_{,\Phi}\cdot0\\
 & =0.
\end{align*}
Then, by metric compatibility,
\begin{align*}
\nabla_{X}C & =\pi_{B}^{*}g^{-1}\cdot\nabla_{X}R^{\left(1\,2\right)}\cdot\pi_{S}^{*}h\cdot R+\pi_{B}^{*}g^{-1}\cdot R^{\left(1\,2\right)}\cdot\pi_{S}^{*}h\cdot\nabla_{X}R\\
 & =\pi_{B}^{*}g^{-1}\cdot0\cdot\pi_{S}^{*}h\cdot R+\pi_{B}^{*}g^{-1}\cdot R^{\left(1\,2\right)}\cdot\pi_{S}^{*}h\cdot0\\
 & =0,
\end{align*}
showing that $C_{,\sigma}\equiv0$ and $C_{,\beta}\equiv0$. Because
$W=f\circ C$, where $f\left(C\right):=\alpha\left(\text{tr}\left(C-I\right)-\log\det C\right)$,
it follows that
\begin{align*}
W_{,\sigma}=C^{*}df\cdot C_{,\sigma} & =0, & W_{,\beta}=C^{*}df\cdot C_{,\beta} & =0,
\end{align*}
establishing the claim.
\end{proof}
\nocite{Abraham&Marsden,Antman,brenner_scott_2008,Dods2012,Dods2013,Dods2022,Eells&LeMaire,fike_2013_hyperdual,Gelfand&Fomin,Hetnarski&Ignaczak,JeffMLee,Klein_Roth_Valizadeh_Weeger_2023,Kolar&Michor&Slovak,kolev_desmorat_2023_souriau,kupferman_shamai_2012,Marsden&Hughes,Marsden&Ratiu,martins_sturdza_alonso_2003,peon_escalante_cantun_avila_carvente_espinosa_romero_penunuri,Plaza&Vallejo,rehner2021,RiemannianLee,Smith,YangGao_Neff_Roventa_Thiel}

\bibliographystyle{plainnat}
\addcontentsline{toc}{section}{\refname}\bibliography{references}

\end{document}